\theoremstyle{plain}
\newtheorem{theorem}{Theorem}[section]
\newtheorem{corollary}[theorem]{Corollary}
\newtheorem{lemma}[theorem]{Lemma}
\newtheorem{Proposition}[theorem]{Proposition}
\newtheorem{Definition}[theorem]{Definition}
\theoremstyle{remark}
\newtheorem{remark}[theorem]{Remark}
\numberwithin{equation}{section}
\title[Smallest singular value Lêvy entries]{The smallest singular value for rectangular random matrices with Lévy entries}
\author{Yi HAN}
\address{Department of Mathematics, Massachusetts Institute of Technology, Cambridge, MA
}
\email{hanyi16@mit.edu}
\begin{document}

\begin{abstract}
Let $X=(x_{ij})\in\mathbb{R}^{N\times n}$ be a rectangular random matrix with i.i.d. entries (we assume $N/n\to\mathbf{a}>1$), and denote by $\sigma_{min}(X)$ its smallest singular value.
When entries have mean zero and unit second moment, the celebrated work of Bai-Yin and Tikhomirov show that $n^{-\frac{1}{2}}\sigma_{min}(X)$ converges almost surely to $\sqrt{\mathbf{a}}-1.$ However, little is known when the second moment is infinite. In this work we consider symmetric entry distributions satisfying $\mathbb{P}(|x_{ij}|>t)\sim t^{-\alpha}$ for some $\alpha\in(0,2)$, and
 prove that $\sigma_{min}(X)$ can be determined up to a log factor with high probability: for any $D>0$, with probability at least $1-n^{-D}$ we have 
$$C_1n^{\frac{1}{\alpha}}(\log n)^\frac{2(\alpha-2)}{\alpha}\leq \sigma_{min}(X)\leq C_2n^{\frac{1}{\alpha}}(\log n)^\frac{\alpha-2}{2\alpha}$$ for some constants $C_1,C_2>0$. The upper bound was derived in a recent work of Bao, Lee and Xu \cite{bao2024phase2} but the lower bound is new and answers a problem posed in that paper in a weaker form. This appears to be the first determination of $\sigma_{min}(X)$ in the $\alpha$-stable case with a correct leading order of $n$, as previous anti-concentration arguments only yield lower bound $n^\frac{1}{2}$. The same lower bound holds for $\sigma_{min}(X+B)$ for any fixed rectangular matrix $B$ with no assumption on its operator norm. 
The case of diverging aspect ratio is also computed. 

\end{abstract}

\maketitle

\section{Introduction}

This paper is concerned with the smallest singular value of a rectangular random matrix with i.i.d. entries. A classical theorem of Bai and Yin \cite{MR1235416} is as follows:
\begin{theorem}\label{theorem1.111}[\cite{MR1235416}] 
    Let $X=(x_{ij})\in\mathbb{R}^{N\times n}$ with independent, identically distributed (i.i.d.) entries $x_{ij}$ satisfying $\mathbb{E}[x_{ij}]=0$, $\mathbb{E}[|x_{ij}|^2]=1$, $\mathbb{E}[|x_{ij}|^4]<\infty$. Assume that the aspect ratio satisfies $N/n\to \mathbf{a}$ as $n\to\infty$ for some $\mathbf{a}>1$. Then almost surely 
    $$n^{-1/2} \sigma_{min}(X)\to \sqrt{\mathbf{a}}-1,\quad\text{ as } n\to\infty,
    $$ where $\sigma_{min}(X)$ denotes the smallest singular value of $X$.
\end{theorem}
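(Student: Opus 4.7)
The strategy is to pass to the Wishart-type matrix $W = X^\top X \in \mathbb{R}^{n\times n}$, whose eigenvalues are $\sigma_i(X)^2$; thus the claim reduces to showing $n^{-1}\lambda_{\min}(W) \to (\sqrt{\mathbf{a}}-1)^2$ almost surely. I would split this into two tasks: (a) prove that the empirical spectral distribution of $W/n$ converges to the Marchenko--Pastur law $\mu_{\mathbf{a}}$ supported on $[(\sqrt{\mathbf{a}}-1)^2,(\sqrt{\mathbf{a}}+1)^2]$; and (b) rule out any eigenvalue straying outside this support.

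For (a) the moment method suffices and uses only the second moment. Expanding $n^{-1}\mathbb{E}[\operatorname{tr}((W/n)^k)]$ as a sum over closed walks of length $2k$ on the bipartite graph $[n]\sqcup[N]$, the leading (planar) walks reproduce the moments of $\mu_{\mathbf{a}}$ while the non-planar contributions are $O(n^{-1})$. A variance estimate together with Borel--Cantelli upgrades convergence in expectation to almost sure weak convergence of the empirical measure. Since $\mu_{\mathbf{a}}$ has a square-root density at its left edge, every neighborhood $[(\sqrt{\mathbf{a}}-1)^2,(\sqrt{\mathbf{a}}-1)^2+\varepsilon]$ carries positive mass, so almost surely $\lambda_{\min}(W/n)\leq(\sqrt{\mathbf{a}}-1)^2+\varepsilon$ for all large $n$; letting $\varepsilon\downarrow 0$ yields the matching upper bound $\limsup n^{-1/2}\sigma_{\min}(X)\leq \sqrt{\mathbf{a}}-1$ a.s.

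The matching lower bound is the main obstacle, and the fourth moment hypothesis enters precisely here. The standard route is first to truncate the entries at a level $\tau_n\to\infty$ with $\tau_n = o(\sqrt{n})$ and to recenter: finiteness of $\mathbb{E}[|x_{ij}|^4]$ ensures that the truncation error and centering shift are both $o(\sqrt{n})$ in operator norm, so they perturb $\sigma_{\min}$ only by $o(\sqrt{n})$ via Weyl's inequality. On the truncated, bounded matrix $\tilde X$ with Wishart companion $\tilde W$, I would then run the trace method at a growing order $k_n\asymp\log n$, tracking non-planar walks carefully enough to conclude by Markov's inequality that, almost surely, no eigenvalue of $\tilde W/n$ lies above $(\sqrt{\mathbf{a}}+1)^2+\varepsilon$ or below $(\sqrt{\mathbf{a}}-1)^2-\varepsilon$ infinitely often.

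The hardest step is the confinement of the lower edge: the positive-moment trace method naturally sees only the top of the spectrum, so a dual device is needed to reach $\lambda_{\min}(W)$. Two natural options are (i) estimate negative moments $\mathbb{E}[\operatorname{tr}((W+\eta I)^{-k})]$ via a resolvent / Stieltjes-transform expansion localized near the left edge, or (ii) run the moment method on a shifted operator such as $cI - W$ for a carefully chosen $c>\|W\|$ whose top eigenvalue encodes $-\lambda_{\min}(W)$. The rectangular geometry $N/n\to\mathbf{a}>1$ is what forces $\lambda_{\min}(W)$ to lie strictly away from $0$, and the combinatorics of bipartite walks that reach the spectral boundary is the core technical challenge; the fourth moment is the minimal integrability assumption that keeps the offending combinatorial terms subleading and permits the Markov step to close.
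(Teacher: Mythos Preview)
The paper does not prove this statement at all: Theorem~1.1 is quoted verbatim from Bai--Yin \cite{MR1235416} as background, with no argument supplied, so there is no ``paper's own proof'' to compare against. Your sketch is therefore not competing with anything in the present paper; it is a (reasonable) outline of the original Bai--Yin argument itself.

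As a sketch of Bai--Yin, your plan is on the right track: the reduction to $W=X^\top X$, Marchenko--Pastur for the easy direction, truncation under the fourth-moment hypothesis, and a high-moment trace method at order $k_n\asymp\log n$ are exactly the ingredients used there. You are also right that the left-edge confinement is the crux and that the naive trace method only sees $\lambda_{\max}$. Of your two options, (ii) is the one that actually works in the original proof: Bai--Yin apply the moment method not to $W/n$ but to a suitable polynomial in $W/n$ (essentially $(W/n-(\mathbf{a}+1)I)$ or its square), chosen so that the spectral edges of $W/n$ map to the top of the spectrum of the transformed matrix, after which the high-moment/Markov step closes. Option (i), negative moments or a resolvent expansion near the left edge, is not how the classical proof proceeds and would require substantially more analytic machinery than the purely combinatorial Bai--Yin argument. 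If you intend to flesh this out, commit to the polynomial-shift route and be explicit about which polynomial you use and why its leading eigenvalue encodes $(\sqrt{\mathbf{a}}-1)^2-\lambda_{\min}(W/n)$.
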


Under the assumptions of Theorem \ref{theorem1.111},  $n^{-\frac{1}{2}}\sigma_{max}(X)$ also converges almost surely to $\sqrt{\mathbf{a}}+1$, where $\sigma_{max}(X)$ is the largest singular value of $X$. The empirical measure of singular values of $X$ converges to the Marchenko-Pastur distribution \cite{marchenko1967distribution} and when $x_{ij}$ are sub-Gaussian, Tracy-Widom fluctuations of $\sigma_{min}(X)$ and $\sigma_{max}(X)$ are proven in \cite{soshnikov2002note}, \cite{feldheim2010universality}. When some $4-\epsilon$- moments of $x_{ij}$ are infinite, the distribution of $\sigma_{max}(X)$ is asymptotically Poisson \cite{auffinger2009poisson}. Recently, \cite{han2024deformed} determined the law of $\sigma_{max}(X)$ when $\mathbb{P}(|x_{ij}|\geq t)\sim t^{-4}$.

It remained an open problem for some time whether the finite fourth moment condition is necessary for a.s. convergence of $n^{-1/2}\sigma_{min}(X)$ to $\sqrt{\mathbf{a}}-1$, until Tikhomirov finally removed the fourth moment condition in \cite{tikhomirov2015limit}.

\begin{theorem}[\cite{tikhomirov2015limit}] The claim of Theorem \ref{theorem1.111} is true without assuming $\mathbb{E}[|x_{ij}|^4]<\infty.$
\end{theorem}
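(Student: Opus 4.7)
The upper bound $\limsup_{n\to\infty} n^{-1/2}\sigma_{min}(X)\le\sqrt{\mathbf{a}}-1$ almost surely is the easier direction: it follows from the convergence of the empirical spectral distribution of $n^{-1}X^\top X$ to the Marchenko-Pastur law, whose support has left edge $(\sqrt{\mathbf{a}}-1)^2$, and this requires only mean zero and unit variance of the entries. My plan therefore focuses on the matching lower bound, where the fourth moment was used in the original Bai-Yin argument to control $\|X\|_{\mathrm{op}}$ via the trace/moment method, and where it must be circumvented.

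The strategy is to combine truncation with the Rudelson-Vershynin geometric framework. Fix $\varepsilon>0$ and a large parameter $K$, and write $X=Y+Z$ with $y_{ij}:=x_{ij}\mathbf{1}_{\{|x_{ij}|\le K\}}-\mathbb{E}[x_{ij}\mathbf{1}_{\{|x_{ij}|\le K\}}]$, so that $Y$ has bounded mean-zero entries of variance $\sigma_K^2\to 1$, and Theorem~\ref{theorem1.111} applied to $Y$ yields $n^{-1/2}\sigma_{min}(Y)\to\sigma_K(\sqrt{\mathbf{a}}-1)$ almost surely. The complementary matrix $Z$ is sparse---each entry is nonzero with probability $\mathbb{P}(|x_{11}|>K)\lesssim 1/K^2$---but its operator norm is uncontrolled. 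Split the unit sphere as $S^{n-1}=\mathrm{Comp}(\delta,\rho)\sqcup\mathrm{Incomp}(\delta,\rho)$ into compressible vs.\ incompressible vectors in the sense of Rudelson-Vershynin, and bound $\|Xv\|$ from below on each piece by different means.

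On compressible $v$, the bound $\|Xv\|\gtrsim\sqrt{n}$ reduces to showing $\sigma_{min}\gtrsim\sqrt{N}$ for every $N\times k$ column-submatrix of $X$ with $k\le\delta n$. Because such submatrices are highly rectangular, this bound follows from an $\varepsilon$-net on the small sphere $S^{k-1}$ together with small-ball concentration of $\|Xv\|$ for a fixed $v$ (needing only the second moment), followed by a union bound over $\binom{n}{k}\le e^{o(N)}$ supports; crucially, this step invokes no control on $\|X\|_{\mathrm{op}}$. On incompressible $v$, use $\|Xv\|\ge\|Yv\|-\|Zv\|$, bound $\|Yv\|$ uniformly from below via the Rudelson-Vershynin invertibility-via-distance inequality applied to $Y$ (valid since $Y$ is bounded), and exploit that incompressibility forces $\|v\|_\infty\lesssim 1/\sqrt{n}$ on a macroscopic set of coordinates: for a fixed such $v$, $\mathbb{E}\|Zv\|^2=N(1-\sigma_K^2)$ is small when $K$ is large.

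The main obstacle is upgrading the fixed-$v$ bound on $\|Zv\|$ to a uniform estimate over the incompressible piece: since $\|Z\|_{\mathrm{op}}$ is uncontrolled, a naive $\varepsilon$-net plus union bound is unavailable, and this is precisely the gap that the fourth moment would have closed. The resolution---the technical heart of Tikhomirov's argument---is a chaining / tensorization estimate that exploits the interplay between the sparsity of $Z$ and the $\ell^\infty$-spread of incompressible vectors to show that only a negligible fraction of rows of $Z$ can simultaneously contribute to $\|Zv\|$, uniformly in $v$. Letting $\varepsilon\to 0$ and then $K\to\infty$ yields the sharp constant $\sqrt{\mathbf{a}}-1$ matching the upper bound.
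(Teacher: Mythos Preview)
The paper does not prove this theorem; it is stated purely as background and attributed to \cite{tikhomirov2015limit}, with no argument given. There is therefore nothing in the present paper to compare your proposal against.

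As for the proposal itself: your outline does capture the broad architecture of Tikhomirov's argument---truncate $X=Y+Z$ with $Y$ bounded (so Bai--Yin applies to $Y$ and delivers the sharp constant) and $Z$ sparse with uncontrolled operator norm, then split the sphere geometrically---and you correctly isolate the crux, namely obtaining a \emph{uniform} bound on $\|Zv\|$ over spread vectors without any handle on $\|Z\|_{\mathrm{op}}$. But your description of the resolution (``a chaining/tensorization estimate that exploits the interplay between the sparsity of $Z$ and the $\ell^\infty$-spread of incompressible vectors'') is a placeholder rather than a plan: it names the difficulty without saying how to overcome it. In Tikhomirov's paper the mechanism is considerably more specific, involving an iterative multi-scale truncation and a combinatorial analysis of how the random support of the large entries interacts with the coordinate structure of spread vectors; a single compressible/incompressible split at one truncation level does not suffice. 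If you intend to reconstruct the proof, that step needs to be made concrete.
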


Recently, \cite{bao2024phase} investigated the finer asymptotics of $\sigma_{min}(X)$ when $\mathbb{P}(|x_{ij}|\geq t)\sim t^{-\alpha}$, $\alpha\in(2,4)$ and obtained three different scaling limits in different ranges of $\alpha$.

Having a good understanding of $\sigma_{min}(X)$ when $x_{ij}$ has a finite second moment, a natural question is to study $\sigma_{min}(X)$ when $x_{ij}$ has heavier tail, say they have a slow varying tail with $\mathbb{P}(|x_{ij}|\geq t)\sim t^{-\alpha}$ for some $\alpha\in(0,2)$. In this direction only partial progress has been achieved so far.

A lower bound for $\sigma_{min}(X)$ has been derived in \cite{tikhomirov2016smallest} without assuming any moment condition on $x_{ij}$, and is purely based on anti-concentration properties:

\begin{theorem}\label{theorem1.2345}[\cite{tikhomirov2016smallest}](Informal statement) For any $\delta>1$ and $N\geq \delta n$, consider a rectangular random matrix
 $X=(X_{ij})\in\mathbb{R}^{N\times n}$ with i.i.d. entries that are uniformly anti-concentrated (measured in terms of Lévy concentration function \eqref{levyconcentrationf}). Then we can find $u,v>0$ such that, for any non-random $B\in\mathbb{R}^{N\times n}$, we have 
$$
\mathbb{P}(\sigma_{min}(X+B)\leq u\sqrt{n})\leq \exp(-vn).
$$    
\end{theorem}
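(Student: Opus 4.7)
The plan is to establish the bound via the variational characterization $\sigma_{\min}(X+B) = \inf_{y\in S^{n-1}}\|(X+B)y\|_2$ and to combine a pointwise small-ball estimate, exponentially small in $N$, with a uniform extension to all $y\in S^{n-1}$. The delicate point is that, absent any moment hypothesis on the entries and with $B$ arbitrary, no useful bound on $\|X+B\|_{\mathrm{op}}$ is available, so the naive $\varepsilon$-net plus triangle inequality does not close.

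First I would prove the single-vector bound. Uniform Lévy anti-concentration of $x_{ij}$ provides constants $c_0>0$ and $p<1$ such that $\sup_v\mathbb{P}(|x_{ij}-v|\le c_0)\le p$. For any fixed $y\in S^{n-1}$, pick $j^\star$ with $|y_{j^\star}|\ge 1/\sqrt{n}$; conditioning on $(x_{ij})_{j\neq j^\star}$ and using that a convolution is no more concentrated than its sharpest summand (Kolmogorov--Rogozin) yields
$$\sup_{v\in\mathbb{R}}\mathbb{P}\bigl(|((X+B)y)_i-v|\le c_0\bigr)\le q<1,$$
with $q$ depending only on $p$, uniformly in $y$, $i$, and $B$. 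Since the rows of $X$ are independent, the number of indices $i$ with $|((X+B)y)_i|>c_0$ stochastically dominates $\mathrm{Bin}(N,1-q)$; a Chernoff bound then shows that this count exceeds $(1-q)N/2$ except on an event of probability $\exp(-c_1 N)$. On its complement, $\|(X+B)y\|_2\ge c_0\sqrt{(1-q)N/2}\ge u_0\sqrt{n}$ for a fixed $u_0=u_0(\delta,p)>0$.

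To pass to a bound uniform in $y\in S^{n-1}$, I would follow the Rudelson--Vershynin paradigm and split the sphere into compressible vectors $\mathrm{Comp}(\rho,\varepsilon)$ (those within $\varepsilon$ of a $\rho n$-sparse vector) and their complement $\mathrm{Incomp}$. Compressible vectors live in a union of coordinate subspaces of effective dimension $\rho n$, so their $\varepsilon$-net has cardinality at most $\binom{n}{\rho n}(C/\varepsilon)^{\rho n}$; taking $\rho$ small in terms of $\delta$ makes this at most $\exp(c_1 N/2)$, and a union bound with the pointwise estimate handles them. Incompressible vectors satisfy $\|y\|_\infty\lesssim 1/\sqrt{n}$, which strengthens the per-row anti-concentration to a $y$-independent constant $q'<1$. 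For these I would close the net argument without invoking $\|X+B\|_{\mathrm{op}}$: condition on a ``good'' row subset $G\subset[N]$ selected by a truncation rule (say, rows on which every entry is bounded by some threshold $K$), where a preliminary deviation argument shows $|G|\ge \delta' n$ with $\delta'>1$ except on a set of probability $\exp(-cN)$; on $G$ the restricted matrix $(X+B)_G$ admits an operator-norm bound sufficient to close a standard $\varepsilon$-net argument, while the pointwise estimate applied on $G$ still decays exponentially in $|G|\gtrsim N$.

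The main obstacle is this last step: making the pointwise estimate uniform for incompressible vectors in the absence of any global operator-norm control on $X+B$. The pointwise and compressible arguments are largely standard, whereas the incompressible case requires a careful row-restriction or truncation with parameters $(\rho,\varepsilon,K,\delta)$ balanced so that the resulting net cardinality fits inside the exponential-in-$N$ budget supplied by the tensorization step. This is the technical heart of Tikhomirov's argument and where I expect the real work to lie.
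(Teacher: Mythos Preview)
You have correctly identified the crux of the problem: closing a net argument for incompressible vectors without any bound on $\|X+B\|_{\mathrm{op}}$. However, your proposed fix---restricting to a row subset $G$ on which the entries of $X$ are bounded by some threshold $K$---does not close the gap. Even if every entry of $X_G$ is bounded, the restricted matrix $(X+B)_G$ still carries the arbitrary deterministic block $B_G$, for which there is no operator-norm control whatsoever. So the step ``on $G$ the restricted matrix $(X+B)_G$ admits an operator-norm bound sufficient to close a standard $\varepsilon$-net argument'' simply fails: you have bounded the wrong matrix. (There is a secondary issue: with only anti-concentration and no tail hypothesis, the threshold $K$ needed to keep a $\delta'n$-fraction of rows may have to grow with $n$ in an uncontrolled way, but this is moot given the first problem.)

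Tikhomirov's actual mechanism, reproduced in this paper as Proposition~\ref{proposition3.13.13.1}, avoids $\|X+B\|$ altogether by a \emph{column}-side decomposition rather than a row restriction. Write $D=D_1+D_2$ with $D_1$ the random part and $D_2$ absorbing $B$ (and anything else uncontrolled). For each net point $y'$, let $E_{y'}=\operatorname{span}\{e_j:j\in\operatorname{supp}y'\}$. Then for any $y$ with $\|\operatorname{Proj}_{E_{y'}}y-y'\|\le\varepsilon$,
\[
\|Dy\|\ \ge\ \operatorname{dist}\bigl(D_1y',\,D(E_{y'}^\perp)+D_2(E_{y'})\bigr)\ -\ \varepsilon\,\|D_1\|.
\]
The first term is the distance of the random vector $D_1y'$ to a subspace of dimension at most $n$ that is \emph{independent} of the randomness in the columns indexed by $\operatorname{supp}y'$; this is handled by the projection anti-concentration bound (Corollary~\ref{corollary3.3}). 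The approximation error involves only $\|D_1\|$, not $\|D\|$, so the arbitrary shift $B$ never enters. To get $\|D_1\|$ bounded in the absence of moments, Tikhomirov conditions each entry to lie in a bounded, mean-zero Borel set $H$ produced by Lemma~\ref{lemma3.56}; the unconditioned remainder is pushed into $D_2$, which is allowed to be arbitrary anyway. The sphere is then decomposed into peaky vectors (handled by column-distance, Lemma~\ref{lemma3.12}), almost $\sqrt{N}$-sparse vectors (Proposition~\ref{proposition3.11}), and the rest (Proposition~\ref{propos13.13}); the compressible/incompressible dichotomy you propose is close in spirit but does not quite match.

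In short: the missing idea is that the net perturbation should be measured against $\|D_1\|$ alone, achieved by projecting out both the off-support columns of $D$ and the on-support columns of $D_2$ before comparing $Dy$ to the net value.
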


Applying Theorem \ref{theorem1.2345} to a rectangular matrix $X$ with $\alpha$-stable distribution, we deduce that with very high probability $\sigma_{min}(X)\geq C\sqrt{n}$ for some $C>0$. However, one may expect that this bound significantly underestimates the magnitude of $\sigma_{min}(X)$ when $\alpha\in(0,2)$, as we will prove that $\sigma_{min}(X)$ should be of order $n^{\frac{1}{\alpha}}$ modulo some log factor.

A closely related setting
is square Lévy matrices,
where the correct asymptotics of $\sigma_{min}(X)$ is recently derived in Louvaris \cite{louvaris2022universality}. Specifically, 

\begin{theorem}\label{theorem1.4gagag}[\cite{louvaris2022universality}] For $\alpha\in(0,2)$  Let $\sigma$ be defined by equation (1.2) in \cite{louvaris2022universality}. let $Z$ be a random variable following a $(0,\sigma)$ $\alpha$-stable law in the sense that 
$$
\mathbb{E}[e^{itZ}]=\exp(-\sigma^\alpha|t|^\alpha)\text{ for all } t\in\mathbb{R}.
$$ Let $D_n(\alpha)=(d_{ij})_{1\leq i,j\leq n}$ be a squared random matrix with i.i.d. entries, each entry has the law $n^{-\frac{1}{\alpha}}Z$. Then there is a countable set $\mathcal{A}\subset(0,2)$ such that for all $\alpha\in(0,2)\setminus\mathcal{A}$, we have
$$\mathbb{P}(n\xi
\sigma_{min}(D_n(\alpha))\leq r)=1-\exp(-\frac{r^2}{2}-r)+\mathcal{O}_r(n^{-c})
$$ for some $c>0$, where $\xi$ is a constant depending only on $\alpha$ defined in \cite{louvaris2022universality}, eq. (1.4).
\end{theorem}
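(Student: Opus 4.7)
The plan is to attack this statement in three stages: reduce the smallest singular value problem to a spectral problem via Hermitization, establish a precise local law for the resolvent near zero, and compare to the Gaussian case---where Edelman's explicit eigenvalue-density computation gives the limit $1 - e^{-r^2/2 - r}$---via a Green's function comparison argument.

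First, I would Hermitize by setting
$$H_n = \begin{pmatrix} 0 & D_n(\alpha) \\ D_n(\alpha)^* & 0 \end{pmatrix} \in \mathbb{R}^{2n \times 2n},$$
whose spectrum consists of the pairs $\pm \sigma_i(D_n(\alpha))$. Thus $\sigma_{\min}(D_n(\alpha))$ equals the smallest non-negative eigenvalue of $H_n$, and the task becomes understanding the distribution of this eigenvalue on the scale $n^{-1}$. Next, I would aim for a local law for the resolvent $G_n(z) = (H_n - z)^{-1}$ down to spectral scales $|\mathrm{Im}\, z| \gtrsim n^{-1+\varepsilon}$. Because $\alpha$-stable entries have infinite variance, the standard Wigner self-consistent equation must be replaced by the fractional self-consistent equation of Bordenave--Caputo--Chafa\"i, in which the limiting Stieltjes transform solves a fixed-point equation involving a stable-law Laplace exponent. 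To implement this one typically splits entries into a bounded ``bulk'' part (truncated at a threshold $\tau = n^{-1/\alpha + \delta}$) and a ``sparse'' part consisting of the $O(1)$ exceptionally large entries per row; the bulk is then handled by standard local-law technology after truncation (in the spirit of Aggarwal--Lopatto--Yau and Bordenave--Guionnet), while the sparse part is shown to perturb $G_n$ only by $O(n^{-c})$ in the relevant norm.

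With the local law in hand, one can carry out a Green's function comparison, or a short-time Dyson-Brownian-motion relaxation, at the hard edge, swapping $\alpha$-stable entries entry-by-entry for appropriately rescaled Gaussian entries. The countable exceptional set $\mathcal{A} \subset (0,2)$ presumably arises from the fractional fixed-point equation degenerating at resonant values of $\alpha$, where the rigidity estimates needed to drive the exchange scheme fail. Transferring the Gaussian-side limit via Edelman's formula then yields the claim, with the constant $\xi$ encoding the rescaling between the $\alpha$-stable and Gaussian variance profiles.

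The principal obstacle I anticipate is the hard-edge local law itself. Unlike the Wigner case, the zero-energy behavior of a Lévy matrix is subtle: the self-consistent equation has only limited smoothness in the fractional variable, the usual Ward identities provide weaker fluctuation control, and entry-by-entry Lindeberg exchange is problematic since $\alpha$-stable entries have no second moment, forcing the exchange scheme to operate on the truncated bulk alongside a separate control of the sparse tail. Pushing the local law all the way down to the optimal scale $n^{-1}$---strong enough to capture the full distributional tail $1 - e^{-r^2/2 - r}$ rather than merely identify the limiting profile---is the delicate technical heart of the argument, and is almost certainly what gives rise to the excluded set $\mathcal{A}$.
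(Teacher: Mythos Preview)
This theorem is not proved in the paper at all: it is a result quoted from Louvaris \cite{louvaris2022universality}, cited in the introduction as related work on \emph{square} L\'evy matrices, whereas the paper's own contributions concern the \emph{rectangular} case. There is therefore no ``paper's own proof'' to compare against; the paper simply states the result and moves on.

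That said, your sketch is a reasonable high-level description of the strategy in \cite{louvaris2022universality} and the surrounding literature (Aggarwal--Lopatto--Yau, Bordenave--Guionnet): Hermitize, establish a local law for the resolvent of the L\'evy matrix down to the optimal scale near zero via the fractional self-consistent equation and a bulk/sparse splitting of entries, then run a Dyson Brownian motion or Green's function comparison to transfer Edelman's Gaussian hard-edge law. Your identification of the hard-edge local law as the main obstacle, and of the countable set $\mathcal{A}$ as arising from values of $\alpha$ where the required rigidity/regularity of the self-consistent equation fails, is also consistent with that body of work. But none of this is carried out or even sketched in the present paper, so there is nothing here for you to be checked against.
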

Theorem \ref{theorem1.4gagag} generalizes the celebrated universality result of Tao and Vu \cite{tao2010random} for the least singular value of i.i.d. (light-tailed) matrices to i.i.d. Lévy matrices.

Now we turn back to a rectangular matrix with i.i.d. stable distribution. In this regime, the spectral measure was characterized in \cite{belinschi2009spectral}, which is a heavy-tail analogue of the
Marchenko-Pastur distribution and, in contrast to the Marchenko-Pastur law, has a nonzero mass in any neighborhood of zero. Not much has been known since then.
A recent investigation \cite{bao2024phase2} identified a change from localized to delocalized eigenvector statistics as we switch from $\alpha\in(0,2)$ to $\alpha>2$, but the asymptotics of $\sigma_{min}(X)$ remain elusive. Meanwhile, the proof methods in these existing works (in particular on squared Lévy matrices \cite{MR4310816},\cite{MR4260468},\cite{louvaris2022universality}) do not easily generalize to this heavy-tail rectangular setting.

\subsection{Two-side least singular value estimates}

The main contribution of this paper is a lower bound for $\sigma_{min}(X)$ that is asymptotically sharp up to a log factor.

\begin{theorem}
    \label{theorem1.56}
     Fix $\alpha\in(0,2),$ $\delta>1$, and let $N=\lceil \delta n\rceil$. Let $\xi$ be a random variable with a symmetric distribution $\xi \overset{\operatorname{law}}{=}-\xi$ satisfying that, for some $C_u>C_\ell>0$ we have for any $t>0$,
\begin{equation}\label{stablelaw}
C_\ell t^{-\alpha}\leq \mathbb{P}(|\xi|\geq t)\leq C_ut^{-\alpha}.
\end{equation}  Let $X=(x_{ij})_{1\leq i\leq N,1\leq j\leq n}$ be a rectangular random matrix with independent identically distributed (i.i.d.) entries having the distribution $\xi$. 
Then for any $D>0$ we can find $C_1,C_2>0$ and some $N_0\in\mathbb{N}_+$ such that, whenever $N\geq N_0$,
\begin{equation}\label{twosideestimate1}
\mathbb{P}\left(C_1n^\frac{1}{\alpha}(\log n)^\frac{2(\alpha-2)}{\alpha}\leq \sigma_{min}(X)\leq C_2n^\frac{1}{\alpha}(\log n)^\frac{\alpha-2}{2\alpha}\right)\geq 1-n^{-D},
\end{equation}
 where the constants $C_1,C_2,N_0$ depend only on $C_\ell$,$C_u$,$\alpha$,$\delta$ and $D$.
\end{theorem}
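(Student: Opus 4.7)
The strategy is to split the complement of the event in \eqref{twosideestimate1}, namely $\{\sigma_{\min}(X)\notin[L_n,U_n]\}$ where $L_n:=C_1 n^{1/\alpha}(\log n)^{5(\alpha-2)/(2\alpha)}$ and $U_n:=C_2 n^{1/\alpha}(\log n)^{(\alpha-2)/(2\alpha)}$, into the two one-sided pieces $\{\sigma_{\min}(X)>U_n\}$ and $\{\sigma_{\min}(X)<L_n\}$, and to bound each by $O(n^{-D})$; a union bound then delivers the stated probability estimate. The rest of the proposal describes how I would tackle the two pieces separately.

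For the upper-bound part ($\sigma_{\min}(X)\leq U_n$), I would produce an explicit near-null test vector. The simplest candidate is $v=e_i$ for the column $i$ minimizing $\|X e_i\|_2=\|x_i\|_2$, so that $\sigma_{\min}(X)\leq\min_i\|x_i\|_2$. For heavy-tailed entries with $\mathbb{P}(|\xi|>t)\sim t^{-\alpha}$ and $\alpha<2$, each column's $L^2$ norm is dominated by its largest absolute entry, which is of order $N^{1/\alpha}\sim n^{1/\alpha}$ with fluctuations governed by extreme-value statistics. A short computation with the CDF of the minimum of $n$ i.i.d.\ column-maxima yields exactly the factor $(\log n)^{(\alpha-2)/(2\alpha)}$. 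If the rough column test does not deliver the precise exponent, a two-column refinement — taking a linear combination that annihilates the two dominant entries — should suffice.

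For the lower-bound part ($\sigma_{\min}(X)\geq L_n$), I would adapt the $\varepsilon$-net / invertibility framework of Rudelson--Vershynin and Tikhomirov to the heavy-tailed regime. The main steps are: (i) partition $S^{n-1}$ into compressible vectors (close to sparse) and incompressible vectors (mass spread over many coordinates); (ii) on compressible $v$, use a low-cardinality net and a union bound that only calls on the universal anti-concentration built into \eqref{stablelaw}; (iii) on incompressible $v$, apply a small-ball bound via the L\'evy concentration function for $\langle x_k,v\rangle$, after truncating entries above a threshold of the form $n^{1/\alpha}(\log n)^{\kappa}$. The symmetry $\xi\overset{\mathrm{law}}{=}-\xi$ permits a clean symmetrization, and the invertibility-via-distance reduction of Rudelson--Vershynin converts the problem into uniform control of $\mathrm{dist}(x_k,H_k)$, where $H_k=\mathrm{span}\{x_j:j\neq k\}$.

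The principal obstacle is the lower-bound analysis on incompressible vectors. A single heavy entry of $X$ can dominate the contribution of a row to $\|Xv\|$, so standard concentration inequalities fail; yet aggressive truncation destroys the small-ball gain one hopes to extract from the bulk. Calibrating the truncation level and tracking all the $\log n$ losses through the net, a Littlewood--Offord-type estimate on the truncated rows, and the distance-to-hyperplane argument is what produces the weaker exponent $5(\alpha-2)/(2\alpha)$ in the lower bound versus $(\alpha-2)/(2\alpha)$ in the upper bound. Closing the gap would require a substantially more delicate analysis; the asymmetry is reflected in the statement of the theorem.
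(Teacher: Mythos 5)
Your upper-bound strategy is in the same spirit as the paper's (Theorem~\ref{theorem2.22} restricts to a column whose label is all-ones, i.e.\ all entries already truncated small, and bounds its norm via Rosenthal and Seginer; your column-test heuristic reaches the same exponent, though the norm of a column with all entries below $t$ behaves like $\sqrt{N}\,t^{1-\alpha/2}$ rather than $t$, so ``dominated by the largest entry'' is not quite the right picture). The real problem is your lower bound.

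The Rudelson--Vershynin/Tikhomirov machine you invoke --- compressible/incompressible decomposition, L\'evy concentration of $\langle r_i,v\rangle$ on rows truncated at $n^{1/\alpha}(\log n)^{\kappa}$, distance-to-hyperplane --- will only produce $\sigma_{\min}(X)\gtrsim\sqrt{n}$, not $n^{1/\alpha}$. To get $\sigma_{\min}(X)\gtrsim n^{1/\alpha}$ via this route you would need a small-ball bound for $\langle r_i,v\rangle$ at scale $n^{1/\alpha-1/2}$, i.e.\ at the scale of the standard deviation of the truncated entries. But a variance bound does not imply anti-concentration at the standard-deviation scale. Concretely, after rescaling by $n^{-1/\alpha+(1-\alpha/2)\tilde\epsilon_n}$ so that the truncated entry has variance $\asymp 1/n$, the rescaled entry satisfies $\mathbb{P}(|\xi_n|\leq M n^{-1/\alpha+(1-\alpha/2)\tilde\epsilon_n})\geq c$ for fixed $M,c>0$: it has an atom of constant probability near $0$, so it is \emph{not} uniformly anti-concentrated, and Littlewood--Offord at the scale you need fails. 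This is exactly the obstruction the paper points out (see the discussion after Lemma~\ref{lemma2.34}) and is the reason the statement in the abstract emphasizes that ``previous anti-concentration arguments only yield lower bound $n^{1/2}$,'' which is precisely Tikhomirov's Theorem~\ref{theorem1.2345}.

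The missing idea is the two-moment matrix universality principle of Brailovskaya--van~Handel (Theorem~\ref{derivation2.414}). After the resampling decomposition $X=\Psi\circ Y+(\mathbf 1-\Psi)\circ Z$ one treats the large-entry part $T_0$ as an arbitrary deterministic matrix, and the small-entry part $T_1$ has mean zero, variance $\asymp 1/n$, and entries bounded by $(\log n)^{-5/2}$. Theorem~\ref{derivation2.414} lets one replace $T_1$ by a Gaussian $G_1$ with the same mean and covariance \emph{without any anti-concentration hypothesis on $T_1$}, and only then are the anti-concentration and $\varepsilon$-net tools (now applied to the Gaussian model, Theorem~\ref{universalbacks}) effective. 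Also, the $(\log n)^{5(\alpha-2)/(2\alpha)}$ exponent does not come from net/distance bookkeeping as you suggest: it comes from needing the almost-sure entry bound $q\ll(\log n)^{-2}$ so that the universality error $\epsilon(t)$ in Theorem~\ref{derivation2.414} is $o(1)$ for $t\asymp\log n$ (Remark~\ref{majorremarks}(4)). Your proposal as written omits the comparison step entirely and so does not prove the lower half of \eqref{twosideestimate1}.
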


The upper bound stated in Theorem \ref{theorem1.56} was established in a recent paper by Bao,
Lee and Xu \cite{bao2024phase2}. In this note we reproduce this derivation for sake of completeness, as some parts of the derivation will be used later on. The authors asked in \cite{bao2024phase2} whether a matching lower bound for $\sigma_{min}(X)$ can be derived, and in this paper we partially answer this problem by deriving a lower bound which almost matches the upper bound except for a log factor.

The proof of Theorem \ref{theorem1.56} and other main results in this work are based on a surprising interplay between two distinctive powerful techniques: a recent matrix universality principle by Brailovskaya and Van Handel \cite{brailovskaya2024universality}, and a careful adaptation of an earlier argument by Tikhomirov \cite{tikhomirov2016smallest}. Such a successful combined usage seems completely new: we need to consider both anti-concentration phenomena and the increased variance due to heavy-tail, and negligence on either side would not lead to our almost optimal estimate in \eqref{twosideestimate1}.

Recall that a random variable $Z$ is said to be $(0,\sigma)$ $\alpha$-stable for some $\alpha\in(0,2)$ and $\sigma>0$, if
$$
\mathbb{E}[e^{itZ}]=\exp(-\sigma^\alpha|t|^\alpha)\text{ for all }t\in\mathbb{R}.
$$
By standard properties of stable distribution, the estimate \eqref{stablelaw} is satisfied by all $\alpha$-stable laws for all $\alpha\in(0,2)$.
The proof of Theorem \ref{theorem1.56} (and all other forthcoming theorems) can be adapted to any symmetric random variable $\xi$ having a slow varying tail in the domain of attraction of $\alpha$-stable law, i.e. for some slow varying function $L(t)$: $$\mathbb{P}(|\xi|\geq t)=L(t)t^{-\alpha}.$$ All the technical arguments remain unchanged. More precisely, we can prove the following.

\begin{theorem}\label{slowvaryingtails}
    Let $\zeta$ be a random variable that satisfies, for some $\alpha\in(0,2)$ and some slow-varying function $L(t)$, the tail estimate 
    \begin{equation}\label{forallsymmetric}\mathbb{P}(|\zeta|\geq t)=L(t)t^{-\alpha},\quad\forall t\geq 1\end{equation} and that $\zeta$ has a symmetric distribution. Let $X=(x_{ij})_{1\leq i\leq N,1\leq j\leq n}$ be a rectangular random matrix (where $N=\lfloor \delta n\rfloor$ for some $\delta>1$) with i.i.d. entries of distribution $\zeta$. Then we can find a slow varying function $L'(t)$ such that we have
    $$
\mathbb{P}(\sigma_{min}(X)\geq n^{\frac{1}{\alpha}}L'(n)^{-1})\geq 1-n^{-D}.
    $$
\end{theorem}

Theorem \ref{theorem1.56} shows that typically the smallest singular value $\sigma_{min}(X)$ is concentrated in a very small window. We believe that the lower bound in \eqref{twosideestimate1} can be improved to $n^\frac{1}{\alpha}(\log n)^\frac{\alpha-2}{2\alpha}$, i.e. the upper bound should give the optimal magnitude. Currently the discrepancy of the upper and lower bound by a log factor is due to the technique we use, see Remark \ref{majorremarks}, (4) for more details. Removing the log factor discrepancy may require considerable effort and innovative ideas, which we leave for future work. Despite this mismatch of log factors, Theorem \ref{theorem1.56} appears to be the first time that we can capture the effect of heavy tails on $\sigma_{min}(X)$ that is not existing for Gaussian distribution. Another major advantage is that the proof is robust to perturbations, even without any control on operator norm of perturbation, see Theorem \ref{lowergeneral}. A further open problem is to derive the exact distribution of $\sigma_{min}(X)$, but we feel this problem is far beyond the scope of existing techniques.

While the estimate \eqref{twosideestimate1} holds with polynomially good probability $1-O(n^{-D})$ for any $D>0$, it does not hold with exponentially good probability $1-e^{-\Omega(n)}$. This can be seen from the simple fact that for any $c\in(0,1)$, with probability $\Omega(e^{-n^c})$, we can find a column of $X$ whose every entry is conditioned to have absolute value less than $n^{\frac{1}{\alpha}-\frac{c}{\alpha}}$. Then, applying a computation as in the proof of Theorem \ref{theorem2.22} shows the Euclidean norm of this column is with high probability much smaller in magnitude than the lower bound in \eqref{twosideestimate1}, so $\sigma_{min}(X)$ is smaller than this quantity. For dense random matrices with uniformly anti-concentrated entry distribution, we often have least singular value bounds with exponentially good probability, see \cite{litvak2005smallest}, \cite{rudelson2009smallest}, \cite{livshyts2021smallest}.  For sparse random matrices, small singular value estimates usually hold with probability $1-e^{\Omega(np)}$ where $p$ is the sparsity, see \cite{basak2017invertibility}, \cite{gotze2023largest}, \cite{dumitriu2024extreme}. This is no surprise given that heavy-tail random matrix ensembles share some common features with sparse matrix ensembles.

\subsection{Diverging aspect ratio and general perturbations}

The lower bound in Theorem \ref{theorem1.56}
is valid in a much wider generality, as it continues to hold when the random variables are no longer i.i.d. and the matrix is perturbed by some other deterministic matrix $B$. It is also important to consider unbounded aspect ratio $N/n$, which we discuss here.

In the next theorem, we (1)consider possibly unbounded aspect ratio $N/n$; (2)remove the i.i.d. assumption and (3)consider a general perturbation $B$.

\begin{theorem}\label{lowergeneral}
    Fix $\alpha\in(0,2),$ $\delta>1$, and let $N,n$ be integers such that $N\geq \delta n$. Let $(x_{ij})_{1\leq i\leq N,1\leq j\leq n}$ be i.i.d. random variables with a symmetric distribution and satisfy \eqref{stablelaw}. Fix $0<A_1<A_2$ and let $(a_{ij})_{1\leq i\leq N,1\leq j\leq n}$ be fixed real numbers such that $$A_1\leq a_{ij}\leq A_2\quad \forall i=1,\cdots,N, j=1,\cdots,n.$$ 
    Let $A=(a_{ij}x_{ij})$ be an $N\times n$ random matrix
and let $B$ be any deterministic $N\times n$ matrix. We do not make any assumption on $B$.
Then for any $D>0$ we can find $C_3>0$ and $N_0\in\mathbb{N}_+$ such that,
whenever $N\geq N_0,$ 
\begin{equation}\label{onesideestimate}
\mathbb{P}(\sigma_{min}(A+B)\leq C_3N^\frac{1}{\alpha}(\log N)^\frac{2(\alpha-2)}{\alpha})\leq N^{-D},
\end{equation}
 where $C_3$ and $N_0$ depend only on $D,\delta,\alpha,C_\ell,C_u,A_1,A_2$, and notably do not depend on $B$.
\end{theorem}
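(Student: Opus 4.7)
The plan is to extend the lower-bound half of Theorem \ref{theorem1.56} to accommodate the three generalizations here---bounded multipliers $a_{ij}\in[A_1,A_2]$, an arbitrary deterministic shift $B$, and a possibly divergent aspect ratio $N/n$---by verifying that every anti-concentration estimate underpinning that bound is transparent to each modification. Both the Tikhomirov-style argument and the Brailovskaya--Van Handel universality step (applied to a suitably truncated matrix) continue to apply in this generality.

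The perturbation $B$ is the easiest to dispose of. Since each $x_{ij}$ is symmetric, the Lévy concentration function is translation invariant, $\mathcal{L}(\langle A_i,v\rangle + (Bv)_i, t) = \mathcal{L}(\langle A_i,v\rangle, t)$; moreover a short argument using symmetry gives the one-sided heavy-tail lower bound $\mathbb{P}(|\xi_i(v)| \ge T) \ge \tfrac{1}{2}\mathbb{P}(|\sum_j a_{ij}x_{ij}v_j| \ge T)$, where $\xi_i(v) := \sum_j a_{ij}x_{ij}v_j + (Bv)_i$, so every small-ball estimate is independent of $B$ regardless of $\|B\|$. The multipliers are similarly harmless: $a_{ij}x_{ij}$ is symmetric and satisfies $C_\ell A_1^\alpha t^{-\alpha} \le \mathbb{P}(|a_{ij}x_{ij}| \ge t) \le C_u A_2^\alpha t^{-\alpha}$, so the tail hypothesis \eqref{stablelaw} transfers with constants depending only on $C_\ell, C_u, A_1, A_2, \alpha$.

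The $N^{1/\alpha}$ scale (in place of the $n^{1/\alpha}$ of Theorem \ref{theorem1.56}) emerges as follows. For each fixed $v \in S^{n-1}$, the variables $\xi_i(v)$, $i=1,\dots,N$, are independent and symmetric with stable-like tails of scale at least $A_1\|v\|_\alpha \ge A_1$ (since $\|v\|_\alpha \ge \|v\|_2 = 1$ for $\alpha \le 2$), so that
\begin{equation*}
\mathbb{P}\bigl(\|(A+B)v\| \le T\bigr) \;\le\; \mathbb{P}\bigl(\max_i|\xi_i(v)| \le T\bigr) \;\le\; \exp\bigl(-c\,A_1^\alpha\, N/T^\alpha\bigr),
\end{equation*}
which is super-polynomially small in $N$ whenever $T \lesssim N^{1/\alpha}/(\log N)^{c_0}$ with $c_0 > 1/\alpha$. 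Heuristically: among the $N$ heavy-tailed coordinates of $(A+B)v$, at least one attains magnitude of order $N^{1/\alpha}$ modulo polylogarithmic factors with very high probability, and this single coordinate already forces $\|(A+B)v\|$ to be large, uniformly in the shift $B$.

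The main obstacle is to upgrade this pointwise bound to a uniform bound over all $v \in S^{n-1}$ without any control on $\|A+B\|$. A naive $\varepsilon$-net approximation fails because the Lipschitz error $\|(A+B)(v-v_0)\| \le \|A+B\|\varepsilon$ cannot be bounded when $B$ is arbitrary. Following Tikhomirov's robust framework in \cite{tikhomirov2016smallest}, I would decompose $S^{n-1}$ into compressible (approximately sparse) vectors, for which the problem reduces to a low-dimensional subspace and is handled by direct small-ball analysis, and incompressible vectors, for which a chaining/anti-concentration argument bypasses operator-norm control entirely. The granularity of this decomposition and the compressibility threshold are what produce the logarithmic correction $(\log N)^{5(\alpha-2)/(2\alpha)}$ appearing in \eqref{onesideestimate}.
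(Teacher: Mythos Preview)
Your opening paragraph names the right two ingredients (truncation plus Brailovskaya--Van Handel universality, then a Tikhomirov-style anti-concentration bound), but the concrete argument you then sketch abandons that plan and tries to run Tikhomirov's compressible/incompressible machinery \emph{directly} on the heavy-tailed matrix $A+B$. That route does not go through, and the paper does not take it.

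The obstruction is operator-norm control on the random part. Tikhomirov's framework (Proposition~\ref{proposition3.13.13.1} in the paper) compares $\|Dy\|$ to $\operatorname{dist}(D_1y',\,\cdot\,)$ at a net point $y'$, incurring an error $\epsilon\|D_1\|$; the net has cardinality $\exp(O(n))$, so one needs $\|D_1\|$ of the same order as the target lower bound. For $\alpha$-stable entries $\|A\|$ is of order $N^{2/\alpha}$ (up to polylogs), not $N^{1/\alpha}$, so the approximation error swallows the signal. Your pointwise bound $\mathbb P(\|(A+B)v\|\le T)\le\exp(-cN/T^{\alpha})$ is fine, but it cannot be upgraded to a uniform bound at scale $T\sim N^{1/\alpha}$ by a net of size $\exp(cn)$ either, since at that $T$ the pointwise probability is only $\exp(-c(\log N)^{c_0\alpha})$. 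The paper avoids this by first truncating at level $N^{1/\alpha-\tilde\epsilon_N}$ with $N^{\alpha\tilde\epsilon_N}=(\log N)^5$, rescaling so the small-entry part $T_1$ has entries bounded by $(\log N)^{-5/2}$ and variance $\asymp N^{-1}$, then invoking Theorem~\ref{derivation2.414} to replace $T_1$ by a Gaussian $G_1$ with error $o(1)$, and only \emph{then} applying the Tikhomirov bound (Theorem~\ref{universalbacks}) to the sub-Gaussian model $G=G_0+G_1$. The perturbation $B$ and the large-entry part $(\mathbf 1-\Psi)\circ Z$ are both absorbed into the deterministic $G_0$, on which Theorem~\ref{universalbacks} imposes no hypothesis. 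Crucially, the exponent $5(\alpha-2)/(2\alpha)$ on $\log N$ is \emph{not} produced by the net granularity as you claim; it is exactly the rescaling factor $N^{-(1-\alpha/2)\tilde\epsilon_N}=(\log N)^{5(\alpha-2)/(2\alpha)}$ dictated by the truncation level needed to make the universality error small.

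Your treatment of the multipliers $a_{ij}$ also has a gap. Saying that $a_{ij}x_{ij}$ satisfies \eqref{stablelaw} with modified constants is true but beside the point: the entries are now \emph{not identically distributed}, and Theorem~\ref{universalbacks} requires the $g_{ij}$ to be i.i.d. The paper handles this after passing to the Gaussian model, where the variances of $G_1$ lie in a fixed interval $[c_1N^{-1},c_2N^{-1}]$; by infinite divisibility one writes $G_1=G_2+G_3$ with $G_2$ having i.i.d.\ $N(0,c_1N^{-1})$ entries and $G_3$ independent Gaussian, then applies Theorem~\ref{universalbacks} to $(G_0+G_3)+G_2$.
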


In Theorem \ref{theorem1.56} we assumed a bounded aspect ratio $N/n$ and do not aim for a sharp estimate on the leading constant, so we can freely interchange $n$ by $N$, or vice versa, in estimate \eqref{twosideestimate1}. But in Theorem \ref{lowergeneral} the aspect ratio is unbounded, so we need to be careful that it is the factor $N$, rather than $n$, appearing in \eqref{onesideestimate}.

An upper bound for $\sigma_{min}(X)$ can also be proven for diverging $N/n$.

\begin{Proposition}\label{propositionunbounded} (Upper bound, diverging $N/n$)  Fix $\alpha\in(0,2)$, $\delta>1$ and let integers $N,n$ satisfy $N\geq \delta n$. Consider an $N\times n$ random matrix $X=(x_{ij})$ with i.i.d. symmetric entries satisfying \eqref{stablelaw}.
Then for any $D>0$ and $\mathbf{b}\in(0,1)$ we can find $C_4>0$ such that 
$$
\mathbb{P}(\sigma_{min}(X)\geq C_4N^\frac{1}{\alpha}(\log n)^\frac{\alpha-2}{2\alpha})\leq N^{-D}+\exp(-n^\mathbf{b}),
$$ where $C_4$ depends only on $D,\delta,\alpha,C_\ell,C_u,\mathbf{b}$.
\end{Proposition}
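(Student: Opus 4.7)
The plan is to use the elementary inequality $\sigma_{\min}(X)\le\min_{1\le j\le n}\|Xe_j\|$ and to exhibit, with very high probability, at least one column of $X$ whose entries are uniformly much smaller than the typical scale $N^{1/\alpha}$. Since the $n$ columns of $X$ are i.i.d., this amounts to a probabilistic search over $n$ independent trials: one tunes a truncation level $T$ so that the per-column ``small norm'' event has probability $\ge n^{-c_1}$ with $c_1<1-\mathbf{b}$, and then the union over $n$ columns produces the stretched-exponential failure probability $\exp(-n^{\mathbf{b}})$.

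Set $T:=c_0(N/\log n)^{1/\alpha}$, where $c_0=c_0(\alpha,C_u,\mathbf{b})$ is chosen below. For each $j\in\{1,\dots,n\}$ let $E_j$ be the event that $|x_{ij}|\le T$ for every $i\le N$. Using the upper tail $\mathbb{P}(|\xi|>T)\le C_uT^{-\alpha}=C_uc_0^{-\alpha}(\log n)/N$ and $\log(1-x)\ge-2x$, one gets $\mathbb{P}(E_j)\ge n^{-2C_uc_0^{-\alpha}}$ for $N$ large. Conditional on $E_j$ the entries of column $j$ are i.i.d.\ with law $\xi\mid |\xi|\le T$, and a standard layer-cake calculation based on \eqref{stablelaw} yields $\mathbb{E}[\xi^2\mid|\xi|\le T]\le C_\alpha T^{2-\alpha}$. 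Hence
\begin{equation*}
\mathbb{E}[\|Xe_j\|^2\mid E_j]\le C_\alpha NT^{2-\alpha}=C_\alpha c_0^{2-\alpha}\,N^{2/\alpha}(\log n)^{(\alpha-2)/\alpha},
\end{equation*}
and Markov's inequality produces a conditional event $F_j$ of probability at least $\tfrac12$ on which $\|Xe_j\|\le C_4N^{1/\alpha}(\log n)^{(\alpha-2)/(2\alpha)}$, with $C_4^2:=2C_\alpha c_0^{2-\alpha}$.

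The events $\{E_j\cap F_j\}_{j=1}^n$ depend on disjoint sets of entries and are therefore mutually independent, each of probability at least $p:=\tfrac12n^{-2C_uc_0^{-\alpha}}$. Taking $c_0$ large enough that $2C_uc_0^{-\alpha}<1-\mathbf{b}$ forces $np\ge n^{\mathbf{b}}$ for $n$ large, hence $(1-p)^n\le\exp(-n^{\mathbf{b}})$. On the complementary event some column $j^\ast$ satisfies $\|Xe_{j^\ast}\|\le C_4N^{1/\alpha}(\log n)^{(\alpha-2)/(2\alpha)}$, which yields the stated upper bound on $\sigma_{\min}(X)$. The additional $N^{-D}$ term in the claim is slack from the Markov step; if desired, it may be absorbed by replacing Markov with a Bernstein inequality for $\sum_i x_{ij}^2\mathbb{1}_{|x_{ij}|\le T}$ (using the truncated fourth-moment bound $\mathbb{E}[\xi^4\mathbb{1}_{|\xi|\le T}]\lesssim T^{4-\alpha}$ and the envelope $T^2$), at the cost of choosing $c_0$ depending on $D$.

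The main obstacle is the simultaneous calibration of the truncation level $T$. One needs $T$ small enough that $NT^{2-\alpha}$ captures the sharp $(\log n)^{(\alpha-2)/\alpha}$ improvement over the naive column-norm scale $N^{2/\alpha}$, yet large enough that the per-column truncation probability $\mathbb{P}(E_j)$ stays at least $n^{-(1-\mathbf{b})}$ so that the union over the $n$ independent columns pays off as a stretched exponential $\exp(-n^{\mathbf{b}})$ rather than a mere inverse polynomial. The choice $T\asymp (N/\log n)^{1/\alpha}$ is exactly where these two constraints meet, and pins down both the leading $N^{1/\alpha}$ order and the logarithmic correction in the statement.
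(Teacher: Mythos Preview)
Your proof is correct and follows the same core idea as the paper: truncate at level $T\asymp(N/\log n)^{1/\alpha}$ so that each column has all entries below $T$ with probability at least $n^{-(1-\mathbf b)}$, then use independence across the $n$ columns to find at least one such column with failure probability $\le\exp(-n^{\mathbf b})$. The only substantive difference is in how the norm of the good column(s) is controlled: the paper collects \emph{all} truncated columns into a minor $Y^{\mathbf m}$ and bounds its operator norm via Rosenthal's inequality together with Seginer's lemma at moment order $q=2\log N$ (this is the source of the $N^{-D}$ term), whereas you bound a \emph{single} column's Euclidean norm by Markov's inequality and absorb the resulting factor $\tfrac12$ into $p$. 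Your route is more elementary---it avoids both Rosenthal and Seginer---and in fact produces the cleaner bound $\exp(-n^{\mathbf b})$ alone; your closing remark that the $N^{-D}$ term is ``slack from the Markov step'' is therefore slightly off, since in your argument no such term arises at all.
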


We stress that the upper bound of Proposition \ref{propositionunbounded} involves a factor $(\log n)$ but the lower bound of Theorem \ref{lowergeneral} involves a much larger factor $(\log N)$. They cannot be interchanged when $N/n$ is very large. The reason is, for the lower bound we need a form of global universality principle, and $N$ is the typical size of the rectangular matrix $X$. For the upper bound we only need to find some columns of $X$ with certain good properties, and there are only $n$ columns, so we only need factor $\log n$. From this reasoning, we believe that in the general case of unbounded $\log N/\log n$, the idea of this work will never lead to lower and upper bounds of $\sigma_{min}(X)$ having the same order of magnitude.

\subsection{A geometric application: Euclidean balls inside random polytopes}

An elegant way to visualize the main result of this paper is to investigate polytopes spanned by randomly generated vectors with heavy-tail entries.

Fix $\delta>1$ and $N\geq \delta n$. We independently generate $N$ Gaussian vectors in $\mathbb{R}^n$, denoted $v_1,\cdots,v_N$, where each $v_i$ has i.i.d. coordinates with standard real-Gaussian distribution. A folklore example in Banach space theory (see for instance \cite{litvak2002randomized}, \cite{mankiewicz2000compact}, \cite{wojtaszczyk2010stability}) states that, $\text{Conv}(v_1,\cdots,v_N)$, the convex hull of $v_1,\cdots, v_N$ in $\mathbb{R}^n$, will contain a Euclidean ball $\epsilon B_2^n$ with probability $1-e^{-\Omega(n)}$, where $\epsilon=\epsilon(N,n)>0$ is some fixed function of $N$ and $n$. 

Let $\Gamma$ be an $N\times n$ rectangular random matrix with i.i.d. standard Gaussian coordinates. Then the conclusion in the last paragraph can be reformulated as requiring \begin{equation}\label{l1quotient}\Gamma^*(B_1^N)\supset \epsilon B_2^n,\end{equation} where $\Gamma^*$ is the transpose matrix of $\Gamma$, and for any $p\in[1,\infty)$ $B_p^n$ denotes the unit $\ell_p^n$ ball in $\mathbb{R}^n$.
This property \eqref{l1quotient} is called the $\ell^1$ quotient property \cite{wojtaszczyk2010stability}, \cite{guedon2020random}, and has important applications in the analysis of compressive sensing algorithms \cite{donoho2006compressed}, \cite{candes2006stable}, \cite{candes2006compressive}.

The concentration of $\sigma_{min}(X)$ can be used to derive the $\ell_1$ quotient property and many other more elaborate results on the geometry of random polytopes, as proposed in
the work \cite{litvak2005smallest}. Moreover, \cite{litvak2005smallest} shows that these properties of random polytopes are universal and do not depend on the specific entry distribution, so it covers Gaussian, Bernoulli, and all normalized sub-Gaussian random variables. Specifically, \cite{litvak2005smallest}, Theorem 4.2 further shows that with probability exponentially close to 1, we have $\Gamma^*(B_1^N)\supset\frac{1}{8}(B_\infty^n\cap \sqrt{C_1\ln(N/n)}B_2^n)$  for $2^n\geq N\geq C_2n$, and for two given constants $C_1,C_2>0$. This was followed by a series of refinements
including \cite{guedon2022geometry} \cite{hayakawa2023estimating}. We note that \cite{guedon2020random} extended these geometric characterizations in \cite{litvak2005smallest} to all random variables with unit variance and uniform anti-concentration, without assuming further moments are finite. In \cite{guedon2022geometry} the authors further considered the polytope generated by $\alpha$-stable distributions, $\alpha\in(1,2)$.

Theorem \ref{lowergeneral} immediately implies an improved estimate on convex hull geometry.

\begin{corollary}\label{corollary}
    Fix $\delta>1$, $\alpha\in(0,2)$ and let integers $N,n$ satisfy $N\geq \delta n$. Let $X=(x_{ij})$ be an $N\times n$ rectangular matrix with i.i.d. entries satisfying \eqref{stablelaw}. Then for any $D>0$ we can find $N_0\in\mathbb{N}$ and $C_4>0$ such that whenever $N\geq \max(N_0,\delta N)$, with probability at least $1-N^{-D}$ we have
    $$
X^*(B_1^N)\supset C_4 N^{\frac{1}{\alpha}-\frac{1}{2}}(\log N)^\frac{2(\alpha-2)}{\alpha}B_2^n,
    $$
    where $C_4,N_0$ depend only on $D,\alpha,\delta,C_\ell,C_u$.
\end{corollary}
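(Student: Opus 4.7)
The plan is to reduce the geometric inclusion to a uniform lower bound on $\|Xy\|_\infty$ over $y\in S^{n-1}$ and then invoke Theorem \ref{lowergeneral} with $B=0$ and $a_{ij}\equiv 1$. By standard polarity for origin-symmetric convex bodies, the inclusion $rB_2^n\subset X^*(B_1^N)$ is equivalent to the support-function inequality $h_{X^*(B_1^N)}(y)\geq r\|y\|_2$ for all $y\in\mathbb{R}^n$. Computing directly,
\[
h_{X^*(B_1^N)}(y)=\sup_{z\in B_1^N}\langle X^*z,y\rangle=\sup_{z\in B_1^N}\langle z,Xy\rangle=\|Xy\|_\infty,
\]
so the corollary is equivalent to the event $\{\|Xy\|_\infty\geq r\|y\|_2\text{ for all }y\in\mathbb{R}^n\}$ with $r=C_4N^{1/\alpha-1/2}(\log N)^{5(\alpha-2)/(2\alpha)}$.

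The second step is the trivial sandwich on $\mathbb{R}^N$,
\[
\|Xy\|_\infty\geq N^{-1/2}\|Xy\|_2\geq N^{-1/2}\sigma_{min}(X)\,\|y\|_2,
\]
which reduces the problem to a lower bound on $\sigma_{min}(X)$ of order $N^{1/\alpha}(\log N)^{5(\alpha-2)/(2\alpha)}$. Theorem \ref{lowergeneral} applied with the trivial choice $a_{ij}\equiv 1$ (for which we can take any $0<A_1<1<A_2$) and $B=0$ yields precisely this bound: for any prescribed exponent $D'>0$, with probability at least $1-2N^{-D'}$ we have $\sigma_{min}(X)\geq C_3 N^{1/\alpha}(\log N)^{5(\alpha-2)/(2\alpha)}$, and on this event the inclusion holds with $C_4=C_3$.

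Finally, to absorb the harmless factor of $2$ and to match the probability bound $1-N^{-D}$ stated in the corollary, I would apply Theorem \ref{lowergeneral} with $D$ replaced by $D+1$ and enlarge $N_0$ so that $2N^{-(D+1)}\leq N^{-D}$ for all $N\geq N_0$; the dependence of $C_4$ and $N_0$ on $(D,\delta,\alpha,C_\ell,C_u)$ is then inherited from the constants supplied by Theorem \ref{lowergeneral}. There is no serious obstacle: the argument is essentially a direct translation of the smallest-singular-value bound into geometric language through the polar-body identity, together with the elementary $\ell_\infty$-versus-$\ell_2$ inequality on $\mathbb{R}^N$. All the analytic difficulty has already been expended in proving Theorem \ref{lowergeneral}.
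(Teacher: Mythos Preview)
Your proposal is correct and follows essentially the same approach as the paper: both reduce the inclusion $rB_2^n\subset X^*(B_1^N)$ to the statement $\|Xy\|_\infty\geq r\|y\|_2$ for all $y$, then use $\|Xy\|_\infty\geq N^{-1/2}\|Xy\|_2$ and invoke Theorem~\ref{lowergeneral}. The only cosmetic difference is that you phrase the first reduction via support functions and polarity, whereas the paper writes it out directly using the hyperplane separation theorem; these are the same argument.
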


The remarkable feature of Corollary \ref{corollary} is that it already yields a significant improvement over the Gaussian case for any bounded aspect ratio $\delta<N/n<\infty.$
In comparison, for Gaussian random polytopes (and all light-tailed distributions), we only have $X^*(B_1^N)\supset\epsilon B_2^n$ for some finite $\epsilon$ when the aspect ratio is bounded. Although it is intuitively clear that random polytopes spanned by heavy-tailed entries will have a much larger volume than its Gaussian counterpart, proving that with high probability it contains a very large Euclidean ball is unexplored in previous literature, and may be inaccessible by other methods based on anticoncentration arguments.  

Corollary \ref{corollary}  can be compared to the recent Theorem 1.8 of \cite{guedon2022geometry}. This cited work works with $\alpha$-stable distribution for $\alpha\in(1,2)$ (while we work with $\alpha\in(0,2)$), and considers the inclusion of general $\ell_q^n$ balls in $X^*(B_1^N)$ whereas we consider the $\ell_2^n$ ball. The estimate in \cite{guedon2022geometry} appears to be better when $N/n$ is very large. On the other hand, if $N/n$ is bounded, then \cite{guedon2022geometry}, Theorem 1.8 yields no improvement to the Gaussian case whereas Corollary \ref{corollary} well captures this heavy-tail inflation effect in the interior of $X^*(B_1^N)$. We expect this work to be a starting point for finer investigations on the geometry of random polytopes generated by $\alpha$-stable distributions, $\alpha\in(0,2)$.

The proof of Corollary \ref{corollary} is elementary and outlined as follows.

\begin{proof}[\proofname\ of Corollary \ref{corollary}]
    This follows from the fact that, if for any $y\in \mathbb{R}^n$ we have $t\|y\|_2\leq \|Xy\|_\infty,$ then $tB_2^n\subseteq X^*(B_1^N).$ Thus if $\|Xy\|_2\geq t\sqrt{N}\|y\|_2$ for any $y\in\mathbb{R}^n$,
    then using the fact $\sqrt{N}\|Xy\|_\infty\geq\|Xy\|_2$, we have $tB_2^n\subseteq X^*(B_1^N).$
Now we make a direct use of the conclusion of Theorem \ref{lowergeneral}.

The claimed fact is an elementary corollary of the hyperplane separation theorem. Indeed, consider any $\tilde{y}\notin X^*(B_1^N)$ with $\|\tilde{y}\|_2\leq t$. This set $X^*(B_1^N)$ is convex in $\mathbb{R}^n$, so the separation theorem provides some $z\in\mathbb{R}^n,\|z\|_2=1$, satisfying 
$$ t>\langle z,\tilde{y}\rangle\geq\sup\{|\langle z,y\rangle|:y\in X^*(B_1^N)\},
$$ so that $\sup_{j=1,\cdots,N}|\langle z,X_{ j\cdot}\rangle|<t$ and thus $\|Xz\|_\infty<t$ (where $X_{j\cdot}$ denotes the $j$-th row of $X$.)
\end{proof}

\section{Proof outline of main results}

To study the heavy-tail random matrix $X$, an (by now standard, see \cite{aggarwal2019bulk}, \cite{MR4310816},\cite{louvaris2022universality},\cite{bao2024phase},\cite{han2024deformed}) approach is to resample the entries of $X$. Specifically,  we first assign a label taking value in $\{0,1\}$ to each entry, where a label 1 is assigned to site $(i,j)$ meaning that $x_{ij}$ takes a small value, and value 0 assigned to $(i,j)$ meaning that $x_{ij}$ takes s large value.  Then we sample the small and large entry parts of $X$ separately via conditional laws of $x_{ij}$ conditioned to be small or large.  Conditioned on the label, the entries of $X$ remain independent.

\subsection{Resampling for bounded aspect ratio}

We outline this resampling procedure for $X$. In this section we assume the aspect ratio is bounded: for some fixed $1<\delta<T<\infty$,
$$
\delta n\leq N\leq Tn.
$$
The diverging aspect ratio case is dealt in Section \ref{divergingaspect}.

\begin{Definition} The sampling procedure (for bounded aspect ratio) has the following steps:
    
\end{Definition}
\begin{enumerate}
\item Fix $\tilde{\epsilon}_n\in(0,\frac{1}{\alpha})$ whose value depends on $n$ and will be determined later. Different values of $\tilde{\epsilon}_n$ will be used in each specific application.

\item Consider $\Psi=(\psi_{ij})$ an $N\times n$ random matrix with independent Bernoulli entries distributed as 
    \begin{equation}\label{generationoflabels}
\psi_{ij}=\begin{cases}1,\quad\text{with probability } \mathbb{P}(|x_{ij}|\leq n^{\frac{1}{\alpha}-\tilde{\epsilon}_n}),\\0,\quad
\text{with probability } \mathbb{P}(|x_{ij}|\geq n^{\frac{1}{\alpha}-\tilde{\epsilon}_n}).
\end{cases}
    \end{equation}

\item Define two classes of random variables $y_{ij}$ and $z_{ij}$, that are conditional versions of $x_{ij}$ such that $|x_{ij}|\leq n^{\frac{1}{\alpha}-\tilde{\epsilon}_n}$ and $|x_{ij}|\geq n^{\frac{1}{\alpha}-\tilde{\epsilon}_n}$. More precisely, we define $y_{ij}$ and $z_{ij}$ to satisfy, for any interval $\tilde{I}\subset\mathbb{R}$,
\begin{equation}
\mathbb{P}(y_{ij}\in\tilde{I})=\frac{\mathbb{P}\left(x_{ij}\in\tilde{I}\cap[-n^{\frac{1}{\alpha}-\tilde{\epsilon}_n},n^{\frac{1}{\alpha}-\tilde{\epsilon}_n}]\right)}{\mathbb{P}(x_{ij}\in[-n^{\frac{1}{\alpha}-\tilde{\epsilon}_n},n^{\frac{1}{\alpha}-\tilde{\epsilon}_n}])},
\end{equation}

\begin{equation}\label{lawofz}
\mathbb{P}(z_{ij}\in\tilde{I})=\frac{\mathbb{P}(x_{ij}\in\tilde{I}\cap((-\infty,-n^{\frac{1}{\alpha}-\tilde{\epsilon}_n}]\cup[n^{\frac{1}{\alpha}-\tilde{\epsilon}_n},\infty)))}{\mathbb{P}(x_{ij}\in(-\infty,-n^{\frac{1}{\alpha}-\tilde{\epsilon}_n}]\cup[n^{\frac{1}{\alpha}-\tilde{\epsilon}_n},\infty))}.
\end{equation}  \item It is not hard to check that the matrix $X$ has the same distribution as 
\begin{equation}\label{resamplingdecision}X\overset{\operatorname{law}}{=} {\Psi}\circ Y+(\mathbf{1}-{\Psi})\circ Z,
\end{equation}
where $Y=(y_{ij})_{1\leq i\leq N,1\leq j\leq n}$ and $Z=(z_{ij})_{1\leq i\leq N,1\leq j\leq n}$, and entries of $Y$ and $Z$ are independent. The symbol $\circ$ denotes the entry-wise (Hadamard) product of two $N\times n$ matrices: $(A\circ B)_{ij}=a_{ij}b_{ij}$, and $\mathbf{1}$ denotes the all-ones $N\times n$ matrix.
\end{enumerate}

\subsection{The upper bound}
The upper bound claimed in Theorem \ref{theorem1.56} is proven in \cite{bao2024phase2}, Theorem 3.10. For sake of completeness, we outline the complete proof from \cite{bao2024phase2}.

\begin{theorem}\label{theorem2.22}[\cite{bao2024phase2}, Theorem 3.10] Fix $\alpha\in(0,2)$, $\delta>1$ and $N=\lceil \delta n\rceil$. For any $D>0$ we can find a constant $C_\eqref{theorem2.22}>0$ and $N_\eqref{theorem2.22}\in\mathbb{N}$ depending only on $\alpha,\delta,C_\ell,C_u,D$ such that 
$$
\mathbb{P}(\sigma_{min}(X)\geq C_\eqref{theorem2.22}n^{\frac{1}{\alpha}}(\log n)^{\frac{\alpha-2}{2\alpha}})\leq n^{-D},\quad N\geq N_\eqref{theorem2.22}.
$$

\end{theorem}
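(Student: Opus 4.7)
The plan is to prove this upper bound by exhibiting a coordinate vector $v = e_j$ such that $\|X e_j\| = \|X_{\cdot j}\|$ is small, where $X_{\cdot j}$ is the $j$-th column; this immediately gives $\sigma_{\min}(X)\leq \|X_{\cdot j}\|$. The strategy is therefore to show that among the $n$ independent columns, at least one has all entries truncated at a carefully chosen level and has Euclidean norm of the target order.

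I would fix a truncation level $T = \kappa\, n^{1/\alpha}(\log n)^{-1/\alpha}$, where $\kappa = \kappa(D,\alpha,\delta,C_\ell,C_u)$ is a large constant to be tuned, and define $E_j$ to be the event that $|x_{ij}|\leq T$ for every $1\leq i\leq N$. Since the columns of $X$ are independent and $\mathbb{P}(|x_{ij}|>T)\leq C_u T^{-\alpha}=C_u\kappa^{-\alpha}\log n/n$, one obtains
\begin{equation*}
\mathbb{P}(E_j)\geq (1-C_u\kappa^{-\alpha}\log n/n)^N\geq n^{-c(\kappa)},\qquad c(\kappa)=2C_u\kappa^{-\alpha}\delta,
\end{equation*}
and by choosing $\kappa$ large we can force $c(\kappa)<1/2$, so that $\mathbb{P}(E_j)\geq n^{-1/2}$.

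Next I would estimate the conditional column norm. By integration by parts against the tail bound $\mathbb{P}(|\xi|\geq t)\leq C_u t^{-\alpha}$, one obtains $\mathbb{E}[x_{ij}^2\mathbf{1}_{|x_{ij}|\leq T}]\leq C T^{2-\alpha}$, hence $\mathbb{E}[\|X_{\cdot j}\|^2\mid E_j]\leq C' N T^{2-\alpha}$. A direct computation with $T=\kappa n^{1/\alpha}(\log n)^{-1/\alpha}$ yields
\begin{equation*}
N T^{2-\alpha}\lesssim \kappa^{2-\alpha}\delta\, n^{2/\alpha}(\log n)^{(\alpha-2)/\alpha},
\end{equation*}
which is exactly the square of the target order $n^{1/\alpha}(\log n)^{(\alpha-2)/(2\alpha)}$. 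Markov's inequality then gives $\mathbb{P}(\|X_{\cdot j}\|\leq C_\eqref{theorem2.22}\, n^{1/\alpha}(\log n)^{(\alpha-2)/(2\alpha)}\mid E_j)\geq 1/2$ for a suitable constant $C_\eqref{theorem2.22}$. Letting $F_j$ denote the conjunction of $E_j$ and this norm bound, we have $\mathbb{P}(F_j)\geq n^{-c(\kappa)}/2$, and the $F_j$ are independent across $j$ because each depends only on the entries of a single column.

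Finally, by independence,
\begin{equation*}
\mathbb{P}\bigl(\textstyle\bigcap_{j=1}^n F_j^c\bigr)\leq \bigl(1-n^{-c(\kappa)}/2\bigr)^n\leq \exp\bigl(-n^{1-c(\kappa)}/2\bigr),
\end{equation*}
and since $1-c(\kappa)>1/2$ this is bounded by $n^{-D}$ for all $n\geq N_\eqref{theorem2.22}$. On the complementary event some column satisfies the norm bound, so $\sigma_{\min}(X)\leq \|X_{\cdot j}\|\leq C_\eqref{theorem2.22} n^{1/\alpha}(\log n)^{(\alpha-2)/(2\alpha)}$ as desired. There is no serious obstacle in this argument; the only delicate step is the calibration of the truncation scale $T$ — it must be large enough that $\mathbb{P}(E_j)$ is only mildly polynomially small so that one of $n$ columns succeeds, yet small enough that $\sqrt{NT^{2-\alpha}}$ matches the claimed order. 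Both constraints are met precisely by the scale $\kappa\, n^{1/\alpha}(\log n)^{-1/\alpha}$, with $\kappa$ large enough to absorb the dependence on $D,\delta,C_u$.
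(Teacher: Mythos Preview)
Your proposal is correct and follows essentially the same approach as the paper: both pick the truncation level $T\sim n^{1/\alpha}(\log n)^{-1/\alpha}$ (the paper writes this as $n^{1/\alpha-\tilde\epsilon_n}$ with $n^{\alpha\tilde\epsilon_n}\asymp\log n$), argue that with high probability at least one column has all entries below $T$, and bound the norm of such a column. The only difference is cosmetic: the paper packages the truncated columns into a minor $Y^{\mathbf m}$ and bounds its operator norm via Rosenthal's inequality and Seginer's lemma, whereas you bound a single column norm directly by a second-moment/Markov argument, which is more elementary and equally effective here.
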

\begin{proof} This proof is taken from \cite{bao2024phase2}.
For each $j=1,\cdots,n$
denote by $\mathbf{
\psi}_j=(\psi_{1j}\cdots\psi_{Nj})^T$ the $j$-th column of the label $\Psi$, and $\mathbf{1}_n$ the all-ones vector. Then 
$$ \exp(-\delta C_u n^{\alpha\tilde{\epsilon}_n})\leq 
\mathbb{P}(\mathbf{\psi}_j=\mathbf{1}_n)\leq \exp(-C_\ell n^{\alpha\tilde{\epsilon}_n}),\quad j\in[n]
$$
and that 
$$
\mathbb{P}(\Psi\text{ has no all-one columns})\leq\exp(-n\exp(-\delta\cdot C_un^{\alpha\tilde{\epsilon
}_n})).
$$

Next we let $Y^{\mathbf{m}}$ denote the minor of $Y$ after removing all columns $j$ such that $\psi_j\neq \mathbf{1}_n$.
We obviously have
$$
\sigma_{min}({\Psi}Y+(\mathbf{1}-{\Psi})Z)=\inf_{v\in\mathbb{S}^{n-1}}\|({\Psi}Y+(\mathbf{1}-{\Psi})Z)v\|\leq \|Y^{\mathbf{m}}\|.
$$

Since $x_{ij}$ has a symmetric distribution, $\mathbb{E}[y_{ij}]=0$ and $\mathbb{E}[y_{ij}^2]\leq C_u n^{-1}n^{{\frac{2}{\alpha}-(2-\alpha})\tilde{\epsilon}_n}$. We have by Rosenthal’s inequality that for any $q>0,$ (where $(Y^{\mathbf{m}})_{i\cdot}$ denotes the $i$-th row of $Y^{\mathbf{m}}$),
$$
\mathbb{E}[\|(Y^{\mathbf{m}})_{i\cdot}\|_2^q]\leq C^q (n^{\frac{1}{\alpha}-\tilde{\epsilon}_n})^q(n^{\alpha\tilde{\epsilon}_n})^\frac{q}{2}+q^\frac{q}{4}(n^{\alpha\tilde{\epsilon}_n})^{\frac{q}{4}},
$$ where $C>0$ depends only on $C_u$, $\alpha$ and $\delta$.

A similar computation applies to the columns of $Y^{\mathbf{m}}$. Now apply Lemma
\ref{kemma2.2spectralradius} with the choice $q=2\log n$, we have
$$
\mathbb{E}\|Y^{\mathbf{m}}\|_2^q\leq 2C^q(n^{\frac{1}{\alpha}-\tilde{\epsilon}_n})^qn(n^{\alpha\tilde{\epsilon}_n})^\frac{q}{2}.
$$

Then applying Markov's inequality, for any $D>0$, we can find $C_\eqref{theorem2.22}>0$ such that
$$
\mathbb{P}\{\|Y^{\mathbf{m}}\|\geq C_\eqref{theorem2.22}n^{\frac{1}{\alpha}-(1-\frac{\alpha}{2})\tilde{\epsilon}_n}\}\leq\frac{1}{2} n^{-D}+\exp(-n\exp(-\delta\cdot C_u n^{\alpha\tilde{\epsilon}_n}))
$$ where $C_\eqref{theorem2.22}>0$ depends on $D,C_u,C_\ell$ and $\delta$.

Now we take some $\mathbf{b}\in(0,1)$ and define $\tilde{\epsilon}_n$ via \begin{equation}
\label{choiceofepsilonn}n^{\alpha\tilde{\epsilon}_n}=\frac{\mathbf{b}\log n}{\delta\cdot C_u}.  
\end{equation}
Finally we take $N_\eqref{theorem2.22}$ to be such that, for any $N\geq N_\eqref{theorem2.22}$, we have $\exp(-n^{1-\mathbf{b}})\leq \frac{1}{2}n^{-D}.$
\end{proof}

In the proof we have used the following Lemma from \cite{seginer2000expected}:
\begin{lemma}\label{kemma2.2spectralradius}[\cite{seginer2000expected}, Corollary 2.2] 
    Consider $Y=(y_{ij})$ some $m_1\times m_2$ random matrix with i.i.d. zero mean entries. If we use $Y_{i\cdot}$ and $Y_{\cdot j}$ to denote the $i$-th row (and resp. the $j$-th column) of $X$, then we can find $C>0$ so that for any $q\leq 2\log\max(m_1,m_2)$, we have
    $$
\mathbb{E}\|Y\|^q\leq C^q\left(\mathbb{E}\max_{1\leq i\leq m_1}\|Y_{i\cdot}\|_2^q+\mathbb{E}\max_{1\leq j\leq m_2}\|Y_{\cdot j}\|_2^q
\right).
    $$
\end{lemma}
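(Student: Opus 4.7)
The plan is to combine symmetrization with a noncommutative Khintchine-type inequality and to identify the resulting matrix square-functions as diagonal matrices of row and column $\ell_2$ norms. Since the entries $y_{ij}$ are centered and i.i.d., a standard symmetrization step first lets me replace $Y$ by $\tilde Y=(\epsilon_{ij}y_{ij})$, where $(\epsilon_{ij})$ is an independent Rademacher matrix, at the cost of a harmless multiplicative constant in the $L^q$ operator norm.

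Conditioning on $(y_{ij})$ and viewing $\tilde Y=\sum_{ij}\epsilon_{ij}y_{ij}E_{ij}$ as a Rademacher sum with matrix coefficients, the noncommutative Khintchine inequality of Lust-Piquard gives
$$
(\mathbb{E}_{\epsilon}\|\tilde Y\|^q)^{1/q}\leq C_0\sqrt{q}\,\max\!\Bigl(\bigl\|\bigl(\textstyle\sum_{ij}y_{ij}^2E_{ij}E_{ij}^{*}\bigr)^{1/2}\bigr\|,\bigl\|\bigl(\textstyle\sum_{ij}y_{ij}^2E_{ij}^{*}E_{ij}\bigr)^{1/2}\bigr\|\Bigr).
$$
A direct calculation shows that $\sum_{ij}y_{ij}^2E_{ij}E_{ij}^{*}$ is the $m_1\times m_1$ diagonal matrix whose $(i,i)$-entry is $\|Y_{i\cdot}\|_2^2$, so its operator norm equals $\max_i\|Y_{i\cdot}\|_2$; symmetrically the other square-function is diagonal with operator norm $\max_j\|Y_{\cdot j}\|_2$. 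Raising to the $q$-th power, integrating over the magnitudes, and using $\max(a,b)^q\leq a^q+b^q$ produces a bound of the desired form but with prefactor $(C_0\sqrt{q})^q$.

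To replace this $q$-dependent constant by an absolute $C^q$ as stated, I invoke the hypothesis $q\leq 2\log\max(m_1,m_2)$: under this cutoff $(\sqrt{q})^q\leq(2\log m)^{q/2}$, and the logarithmic inflation can be absorbed into the maxima on the right-hand side via the observation that the $q$-th moment of $\max_i\|Y_{i\cdot}\|_2$ already dominates the $q$-th moment of a single row by a factor $(\log m)^{q/2}$ by a standard maximal inequality for i.i.d. random variables.

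The main obstacle is exactly this removal of the $\sqrt{q}$ factor while preserving an absolute constant $C$; the comparison above is sharp only because the row and column norms are themselves well-concentrated i.i.d. sums, so the details require some care. A cleaner alternative, which bypasses Khintchine entirely, is to write $\mathbb{E}\|Y\|^{q}\leq\mathbb{E}\,\mathrm{tr}((YY^{*})^{q/2})$ for even $q$ and expand the right-hand side as a sum over closed walks of length $q$ on the bipartite graph $K_{m_1,m_2}$, classifying walks by their combinatorial skeleton and matching the count against the row/column maxima. The walk-counting bookkeeping is the technical heart of that route, but the cutoff $q\leq 2\log\max(m_1,m_2)$ is what keeps the resulting bound balanced against $\mathbb{E}\max_i\|Y_{i\cdot}\|_2^q+\mathbb{E}\max_j\|Y_{\cdot j}\|_2^q$ with an absolute constant, and I would expect this combinatorial argument to be the cleanest path to the lemma as stated.
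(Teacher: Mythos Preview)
The paper does not prove this lemma at all; it is quoted verbatim as Corollary~2.2 of Seginer and used as a black box. So there is nothing to compare against except Seginer's original argument, which is the combinatorial moment-method route you allude to at the end.

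Your primary route via noncommutative Khintchine has a genuine gap. After symmetrization and Lust--Piquard you obtain
\[
\bigl(\mathbb{E}\|Y\|^q\bigr)^{1/q}\;\le\; C_0\sqrt{q}\,\max\Bigl(\max_i\|Y_{i\cdot}\|_2,\;\max_j\|Y_{\cdot j}\|_2\Bigr),
\]
and the $\sqrt{q}$ is sharp for Khintchine; with $q\sim\log m$ this is a $\sqrt{\log m}$ loss. Your proposed absorption---that $\mathbb{E}\max_i\|Y_{i\cdot}\|_2^q$ exceeds the $q$-th moment of a single row by a factor $(\log m)^{q/2}$---is both false in general (e.g.\ bounded entries, where all row norms concentrate near the same value and the maximum gains nothing) and irrelevant even when true: the Khintchine bound already has $\max_i\|Y_{i\cdot}\|_2$ on the right, not a single row, so there is nothing smaller to trade up from. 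The point is structural: Khintchine uses only independence and symmetry, and for merely independent (not i.i.d.) entries a $\sqrt{\log m}$ factor is \emph{necessary} (take a diagonal Gaussian). Seginer's theorem is log-free precisely because the entries are i.i.d., and any proof must exploit that hypothesis in a way Khintchine cannot.

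Your fallback---expand $\mathbb{E}\,\mathrm{tr}(YY^*)^{q/2}$ as a sum over closed bipartite walks and control the combinatorics using the i.i.d.\ structure---is exactly Seginer's approach and is the correct path; the i.i.d.\ assumption enters when one bounds the contribution of each walk shape by a product of row/column norms in a way that balances against the shape count. But what you have written is only a pointer to that argument, not a proof: the ``walk-counting bookkeeping'' you defer is the entire content of the lemma.
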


\subsection{The lower bound}

In the proof of lower bound, we take the same decomposition $X={\Psi}\circ Y+(\mathbf{1}-{\Psi})\circ Z$ but take the truncation level $\tilde{\epsilon}_n$ slightly differently. We first compute the variance of $y_{ij}$: for any $\tilde{\epsilon}_n=o(1),$ we must have 
$\mathbb{P}(|y_{ij}|\leq n^{\frac{1}{\alpha}-\tilde{\epsilon}_n})=1-o(1)$, so that
$$\begin{aligned}
\mathbb{E}[|y_{ij}|^2]&\in(1+o(1))[C_\ell, C_u]\cdot \int_0^{n^{\frac{1}{\alpha}-\tilde{\epsilon}_n}}2x^{1-\alpha}dx \in \frac{2+o(1)}{2-\alpha}[C_\ell,C_u]\cdot n^{\frac{2-\alpha}{\alpha}-(2-\alpha)\tilde{\epsilon}_n}
.\end{aligned}
$$Then we take the renormalization and deduce 
$$
\mathbb{E}[|n^{-\frac{1}{\alpha}+(1-\frac{\alpha}{2})\tilde{\epsilon}_n}y_{ij}|^2]=C_n\cdot n^{-1}
$$ for some constant $C_n\in \frac{2+o(1)}{2-\alpha}[C_\ell,C_u]$. We summarize the computations so far as follows
\begin{lemma}\label{lemma2.34}
    Let $\xi_n$ be the distribution of $n^{-\frac{1}{\alpha}+(1-\frac{\alpha}{2})\tilde{\epsilon}_n}y_{ij}$, then $$\mathbb{E}[\xi_n]=0,\quad \mathbb{E}[|\xi_n|^2]=C_n\cdot n^{-1} ,\quad |\xi_n|\leq n^{-\frac{\alpha}{2}\epsilon_n} \text{ a.s. }.$$ 
\end{lemma}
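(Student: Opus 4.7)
The plan is to verify each of the three properties directly from the definition of $y_{ij}$ given just above \eqref{lawofz} and the two-sided tail bound \eqref{stablelaw}; the assertion is essentially bookkeeping of exponents following the heuristic display immediately preceding the statement. Write $T_n := n^{1/\alpha - \tilde{\epsilon}_n}$, so that $y_{ij}$ is by construction the law of $\xi$ conditioned on $\{|\xi| \le T_n\}$ and $\xi_n = n^{-1/\alpha + (1-\alpha/2)\tilde{\epsilon}_n}\, y_{ij}$. For the first claim, the symmetry $\xi \overset{\mathrm{law}}{=} -\xi$ combined with the symmetry of the interval $[-T_n, T_n]$ yields $y_{ij} \overset{\mathrm{law}}{=} -y_{ij}$, whence $\mathbb{E}[y_{ij}] = 0$ and so $\mathbb{E}[\xi_n] = 0$.

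For the variance, I would write
$$\mathbb{E}[y_{ij}^2] = \frac{\mathbb{E}[\xi^2 \mathbf{1}_{|\xi| \le T_n}]}{\mathbb{P}(|\xi| \le T_n)},$$
the denominator being $1 + O(T_n^{-\alpha}) = 1+o(1)$ since $T_n \to \infty$. For the numerator, apply the layer-cake identity $\mathbb{E}[\xi^2 \mathbf{1}_{|\xi| \le T_n}] = \int_0^{T_n} 2t\, \mathbb{P}(t < |\xi| \le T_n)\, dt$ and insert the pointwise bounds $C_\ell t^{-\alpha} \le \mathbb{P}(|\xi| > t) \le C_u t^{-\alpha}$ to obtain a quantity of order $\frac{2+o(1)}{2-\alpha}\, \theta\, T_n^{2-\alpha}$ with $\theta \in [C_\ell, C_u]$; the subtracted boundary term $T_n^2 \mathbb{P}(|\xi| > T_n)$ is of the same order and is absorbed into the enlarged constant. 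Substituting $T_n^{2-\alpha} = n^{(2-\alpha)/\alpha - (2-\alpha)\tilde{\epsilon}_n}$ and multiplying by the squared scaling factor $n^{-2/\alpha + (2-\alpha)\tilde{\epsilon}_n}$, the exponents telescope to $n^{-1}$, yielding $\mathbb{E}[|\xi_n|^2] = C_n\, n^{-1}$ with $C_n \in \frac{2+o(1)}{2-\alpha}[C_\ell, C_u]$.

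Finally, the almost-sure bound on $|\xi_n|$ is immediate: $|y_{ij}| \le T_n$ by the very definition of $y_{ij}$, so
$$|\xi_n| \le n^{-1/\alpha + (1-\alpha/2)\tilde{\epsilon}_n} \cdot n^{1/\alpha - \tilde{\epsilon}_n} = n^{-(\alpha/2)\tilde{\epsilon}_n},$$
matching the stated bound (reading $\epsilon_n$ as $\tilde{\epsilon}_n$). There is no genuine obstacle here; the only point requiring a moment of care is confirming that the subtracted boundary term in the layer-cake computation is of the same order as the leading $T_n^{2-\alpha}$ contribution, so that the two-sided estimate \eqref{stablelaw} can be pushed through without altering the scaling.
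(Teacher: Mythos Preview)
Your proposal is correct and follows essentially the same route as the paper: the lemma there is explicitly stated as a summary of the computations immediately preceding it, which use symmetry for the mean, the layer-cake representation together with \eqref{stablelaw} for the variance, and the truncation level for the almost-sure bound. Your treatment is in fact slightly more careful than the paper's own display in that you explicitly track the boundary term $T_n^2\,\mathbb{P}(|\xi|>T_n)$, which the paper absorbs silently into the constant interval.
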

Instead of choosing $\tilde{\epsilon}_n$ as in \eqref{choiceofepsilonn}, we will choose $\tilde{\epsilon}_n$ in the proof of lower bound as
\begin{equation}\label{choicelowerbound}
n^{\alpha\tilde{\epsilon}_n}=\mathbf{c}(\log n)^4
\end{equation} where $\mathbf{c}>0$ will later be set to be a sufficiently large constant. This change is made for a purely technical purpose, so that we can use Theorem \ref{derivation2.414}. See Remark \ref{majorremarks}, (4) for more discussions on the optimality of $\tilde{\epsilon}_n$.

We let $\mathcal{F}_\Psi$ be a sigma subalgebra of the probability space $(\Omega,\mathcal{F},\mathbb{P})$ spanned by all the random variables $\psi_{ij}$ and the events $\{\mathbf{\psi}=W\}$ for any $W\in\{0,1\}^{N\times n}$. Then conditioning on the filtration $\mathcal{F}_\Psi$, the random variables $y_{ij}$ and $z_{ij}$ are mutually independent.

Now we outline a heuristic argument to prove Theorem \ref{theorem1.56}.
For any fixed choice of label ${\Psi}$, we first sample the entries $Z=(z_{ij})$, and we show that no matter what value $z_{ij}$ takes, we can always use the randomness of $y_{ij}$ to derive high probability lower bounds for $\sigma_{min}(X)$ that are independent of $Z$.

Via the normalization in Lemma \ref{lemma2.34}, we see that $n^{-\frac{1}{\alpha}+(1-\frac{\alpha}{2})\tilde{\epsilon}_n}X$ has most entries distributed as $\xi_n$ which has zero mean, variance $n^{-1}$ and are almost surely bounded. Since $X$ is rectangular, we can heuristically deduce that $\sigma_{min}(n^{-\frac{1}{\alpha}+(1-\frac{\alpha}{2})\tilde{\epsilon}_n}X)\geq \sqrt{\mathbf{a}}-1$  by the classical Bai-Yin Theorem (Theorem \ref{theorem1.111}).

This heuristic argument has several flaws. First, higher moments of the random variable $\xi_n$ are not well-controlled, and $\xi_n$ are not uniformly anti-concentrated in $n$. Indeed, it is clear that for some fixed $M>0,C>0$ we have
$\mathbb{P}(|y_{ij}|\leq M)\geq C,$ so that we have $\mathbb{P}(|\xi_n|\leq Mn^{-\frac{1}{\alpha}+(1-\frac{\alpha}{2})\tilde{\epsilon}_n})\geq C$, so that anti-concentration arguments (as in the proof of Theorem \ref{theorem1.2345}) cannot be used directly. Second, by our sampling procedure there are still $\gg n$ entries $(i,j)$ of $X$ where $\psi_{ij}=0$ and thus the randomness at site $(i,j)$ cannot be used directly. With high possibility each row has such entries, so we have to deal with $X$ with some randomness removed. Third, we have very weak control on the operator norm of the conditionally non-random part $\|(\mathbf{1}-\Psi)\circ Z\|$ containing the large entries, and we need a method to lower bound the smallest singular value for a rectangular matrix with normalized entries perturbed by a large deterministic matrix. This is very similar to the setting of \cite{tikhomirov2016smallest}.

\subsection{A two-moment replacement principle for minimal singular value}

We now address the first caveat raised in the last section. We show that we can replace $n^{\frac{1}{2}}\xi_n$ by some random variable which is uniformly anti-concentrated with all moments finite while not changing the value of $\sigma_{min}(X)$ too much, conditioned on the value of ${\Psi}$ and $Z$. Actually, we can replace $n^{\frac{1}{2}}\xi_n$ by a standard real Gaussian variable. This is the main result of \cite{brailovskaya2024universality}. 

The next theorem is essentially a corollary of \cite{brailovskaya2024universality}, Theorem 2.6 and 3.16. 
\begin{theorem}\label{derivation2.414} Let $N\geq n$.
    Let $T$ be an $N\times n$ matrix with decomposition $T=T_0+T_1$, where $T_0$ is deterministic, $T_1$ has independent, mean zero coordinates $(t_{ij})$ such that $\sup_{i,j}\mathbb{E}[|t_{ij}|^2]\leq \frac{M^2}{n}$ for some fixed $M>1$ and let $q>0$ be such that  $$\sup_{i,j}|t_{ij}|\leq q,\quad a.s.$$ Let $G=G_0+G_1$ be another $N\times n$ matrix where $G_0=T_0$, $G_1=(g_{ij})$ has independent mean zero Gaussian entries and share the same variance profile as $T_1$ i.e. $\mathbb{E}[g_{ij}^2]=\mathbb{E}[t_{ij}^2]$ for each $i,j$. Then on a common probability space supporting an independent copy of $G$ and $T$, we have: for any $t>0$,
     $$
\mathbb{P}(|\sigma_{min}(T)-\sigma_{min}(G)|\geq Cn^{-\frac{1}{2}}t^{\frac{1}{2}}+Cq^{\frac{1}{3}}t^{\frac{2}{3}}(\frac{N}{n})^\frac{1}{3}+Cqt)\leq 8N e^{-t},
    $$ where $C>0$ is a universal constant that only depends on $M>0$.
\end{theorem}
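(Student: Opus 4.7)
The plan is to reduce the comparison of the smallest singular values of the rectangular matrices $T$ and $G$ to a Hausdorff comparison of Hermitian spectra, and then invoke the Brailovskaya--Van Handel universality machinery of \cite{brailovskaya2024universality} directly. First I would pass to the self-adjoint dilations
\[
\tilde{T}=\begin{pmatrix} 0 & T\\ T^*& 0\end{pmatrix},\qquad \tilde{G}=\begin{pmatrix}0 & G\\ G^* & 0\end{pmatrix},
\]
which are $(N+n)\times (N+n)$ Hermitian matrices whose spectra are $\{\pm\sigma_i(T)\}_{i=1}^{n}\cup\{0\}^{N-n}$ and the analogous set for $G$. In particular $\sigma_{\min}(T)$ is the $(N+1)$-th largest eigenvalue of $\tilde T$, and similarly for $G$, so that any Hausdorff-distance bound on the two spectra yields the desired control on $|\sigma_{\min}(T)-\sigma_{\min}(G)|$.

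Next I would decompose $\tilde T-\tilde T_0=\sum_{i,j}t_{ij}Z_{ij}$ as a sum of independent, centered, self-adjoint rank-two matrices $Z_{ij}=e_if_j^{*}+f_je_i^{*}$, where $e_i,f_j$ are the canonical basis vectors of the two blocks. Each summand has operator norm at most $q$ almost surely, and the matrix-variance parameters that feed into the BvH bounds can be read off from the block structure: $\|\mathbb{E}[(\tilde T-\tilde T_0)^{2}]\|\le M^{2}N/n$ (controlled by the larger $n\times n$ block $\mathbb{E}[T_1^{*}T_1]$), the entrywise variance is at most $M^{2}/n$, and the uniform summand norm is $q$. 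The Gaussian analogue $\tilde G-\tilde G_0$ has identical covariance by construction, and after a standard sub-Gaussian truncation at scale $q$ (absorbed into the final $Ne^{-t}$ tail) it satisfies the same $L^{\infty}$ hypothesis as $\tilde T-\tilde T_0$.

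With these parameters in hand I would invoke the probabilistic universality estimate of \cite{brailovskaya2024universality} (Theorem 2.6 combined with Theorem 3.16) separately for $\tilde T$ and for $\tilde G$. This produces a single deterministic ``free deformed'' spectrum $\Sigma_{\star}$ such that both $\operatorname{spec}(\tilde T)$ and $\operatorname{spec}(\tilde G)$ lie within Hausdorff distance
\[
\Delta(t)\ :=\ Cn^{-1/2}t^{1/2}+Cq^{1/3}t^{2/3}(N/n)^{1/3}+Cqt
\]
of $\Sigma_{\star}$, each with failure probability at most $4Ne^{-t}$. The three terms in $\Delta(t)$ are the standard BvH ingredients $v\sqrt{t}$, $(R\sigma^{2}t^{2})^{1/3}$, $Rt$ specialized to $v^{2}=M^{2}/n$, $\sigma^{2}=M^{2}N/n$, $R=q$. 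A triangle inequality and a union bound then yield $|\sigma_{\min}(T)-\sigma_{\min}(G)|\le 2\Delta(t)$ on an event of probability at least $1-8Ne^{-t}$, which is the claim after absorbing the factor of $2$ into $C$.

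The main obstacle I foresee is a bookkeeping one: matching the BvH variance and norm parameters carefully through the Hermitian dilation — which couples the two rectangular sides — so that the middle term comes out exactly as $q^{1/3}t^{2/3}(N/n)^{1/3}$ without a spurious $\sqrt{N/n}$ or stray logarithmic factor, and verifying that the Hausdorff bound can be applied at the ``inner edge'' of $\Sigma_{\star}$ that corresponds to $\sigma_{\min}$ rather than to the bulk. A mild subtlety is that $\Sigma_{\star}$ need not equal the actual spectrum of $\tilde G$; however, the two BvH applications to $\tilde T$ and to $\tilde G$ use the \emph{same} deterministic $\Sigma_{\star}$, so the triangle inequality still delivers the direct comparison claimed in the theorem.
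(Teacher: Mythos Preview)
Your overall strategy---Hermitize, compute the Brailovskaya--Van Handel parameters, and read off a Hausdorff bound---is the right one and matches the paper. But there is a genuine gap at the step where you pass from a Hausdorff bound on $\operatorname{sp}(\tilde T),\operatorname{sp}(\tilde G)$ to a bound on $|\sigma_{\min}(T)-\sigma_{\min}(G)|$.

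The problem is that for $N>n$ (the only case that matters in the applications, where $N\ge\delta n$), the naive dilation $\tilde T$ has $0$ as an eigenvalue of multiplicity $N-n$, and Hausdorff distance is blind to multiplicities and ordering. Concretely: take $N>n$, $\sigma_{\min}(G)=\epsilon/2$, $\sigma_{\min}(T)=100$, and let all other singular values of $T$ and $G$ coincide and be clustered near $100$. Then $\pm\sigma_{\min}(G)$ are within $\epsilon$ of $0\in\operatorname{sp}(\tilde T)$, and $\pm\sigma_{\min}(T)$ are within $\epsilon$ of some $\pm\sigma_{n-1}(G)$ near $100$; one checks $d_H(\operatorname{sp}(\tilde T),\operatorname{sp}(\tilde G))\le\epsilon$ while $|\sigma_{\min}(T)-\sigma_{\min}(G)|\approx 100$. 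So the sentence ``any Hausdorff-distance bound on the two spectra yields the desired control on $|\sigma_{\min}(T)-\sigma_{\min}(G)|$'' is false, and routing through an intermediate free spectrum $\Sigma_\star$ does not help: the same absorption by the zero eigenvalue occurs on each leg of the triangle. (Your remark that $\sigma_{\min}(T)$ is the $(N+1)$-th largest eigenvalue is also off---that position carries $-\sigma_{\min}(T)$---but even the correct labeling would not rescue the argument, since Hausdorff closeness does not imply closeness of ordered eigenvalues.)

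The paper fixes exactly this by \emph{regularizing} the dilation. Instead of $\tilde T$, it uses a larger block matrix $\widetilde T_\epsilon$ containing an extra off-diagonal block $A_\epsilon^{1/2}=2\epsilon\,\mathbf{1}$, so that the nonzero part of $\operatorname{sp}(\widetilde T_\epsilon)$ is $\pm\operatorname{sp}\bigl((TT^*+4\epsilon^2\mathbf{1})^{1/2}\bigr)$, every element of which has absolute value at least $2\epsilon$. This forces a gap of width $>\epsilon$ between $0$ and the positive spectrum, so a one-sided Hausdorff inclusion with error $\epsilon$ can no longer send the smallest positive eigenvalue to $0$; one then recovers $\sigma_{\min}(T)\ge\sigma_{\min}(G)-3\epsilon$ (and symmetrically the other inequality). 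This regularization step is the missing idea in your proposal.

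Two minor side remarks: the Brailovskaya--Van Handel theorem quoted here already compares $X$ directly to its Gaussian model $G$ in Hausdorff distance, so neither the detour through a free spectrum $\Sigma_\star$ nor a sub-Gaussian truncation of $G$ is needed; and the BvH parameters $\sigma$, $\sigma_*$, $R$ are unchanged by adding the deterministic $A_\epsilon^{1/2}$ block, so once you insert the regularization your parameter calculation goes through verbatim.
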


When the aspect ratio $N/n$ is bounded, for the estimate to be meaningful we need $t\gg\log n$ and, as one can easily check that $\sigma_{min}(G)=O(1),$ we need $q\leq c(\log n)^{-2}$ for some very small $c>0$, which is the reason we set \eqref{choicelowerbound}. The proof of Theorem \ref{derivation2.414} is deferred to Section \ref{section3.5}. When the aspect ratio $N/n$ is diverging, we will set the magnitude of $q$ differently, see Section \ref{divergingaspect}.

\begin{remark}\label{majorremarks}
We give several important remarks before applying this theorem.
\begin{enumerate} \item Theorem \ref{derivation2.414} does not require that entries of $T_1$, or any of its normalized version, are uniformly anti-concentrated. \item  All parameters in Theorem \ref{derivation2.414} depend only on the random part $T_1$ and do not depend on the deterministic part $T_0$. This is very important for our application. \item 
We can also compare $T$ to its free probability version $T_{\text{free}}$ and compute $\sigma_{min}(T_{\text{free}})$ using Lehner's formula \cite{lehner1999computing}. This innovative approach is proposed and developed carefully in \cite{bandeira2023matrix}. However, we have almost no control over the very large deterministic part $T_0$, and some randomness is removed from the random part $T_1$. This means that the computation of $\sigma_{min}(T_{\text{free}})$ will be very difficult. Instead, we take the other approach by lower bounding $\sigma_{min}(G)$, which draws us on the techniques in the subfield of quantitative invertibility of random matrices (see review \cite{MR4680362}) and in particular in a very similar situation as \cite{tikhomirov2016smallest}. \item  The choice of $\tilde{\epsilon}_n$ in \eqref{choicelowerbound} is made only for Theorem \ref{derivation2.414} to produce meaningful quantitative estimates. This choice of $\tilde{\epsilon}_n$ is the source that produces a mismatch in the lower and upper bounds of Theorem \ref{theorem1.56} by a factor of $(\log n)^\frac{4(\alpha-2)}{2\alpha}$, and we believe that the choice of $\tilde{\epsilon}_n$ such that $n^{\alpha\tilde{\epsilon}_n}=C(\log n)^{-\frac{1}{2}}$, as in \eqref{choiceofepsilonn}, should be the optimal one under which a similar comparison theorem as Theorem \ref{derivation2.414} can be proven. Currently, there exists another method to bound the least singular value of a sparse rectangular matrix, which is \cite{dumitriu2024extreme}. The work \cite{dumitriu2024extreme} remarkably achieves the optimal sparsity scale for a Bai-Yin type result to hold, but the proof requires the entries to have zero mean. Informally, the method of Theorem \ref{derivation2.414}
works only for inhomogeneous sparse matrix models with sparsity ( suitably defined) at least $(\log n)^4/n$,
while for the homogeneous case we can go down to sparsity $(\log n)/n$.  In our setting, considering a large chunk of fixed non-zero entries is fundamentally important, and it is unclear how to generalize the proof of \cite{dumitriu2024extreme} to a matrix with arbitrary mean, so we do not take this approach.

\end{enumerate}
\end{remark}

\subsection{When anti-concentration takes in}
Having Theorem \ref{derivation2.414} in hand, the next step is to lower bound $\sigma_{min}(G)$ for a rectangular random matrix $G$ with Gaussian entries. As some randomness of $G$ is removed, the most direct proof does not work. Also, it appears that assuming a Gaussian entry does not simplify much the proof, so we make a more general statement for independent sub-Gaussian entries with uniform anti-concentration.

Recall that a mean zero, variance one random variable $\xi$ is said to be $K$-sub-Gaussian for some $K>0$ if 
$\mathbb{E}[\exp(|\xi|^2/K^2)]\leq 2.$

\begin{Definition}\label{line276definition}
For a given label $\Psi=(\psi_{ij})\in\{0,1\}^{N\times n}$, we define a random matrix $G$ with label $\Psi$ via $G=G_0+G_1,$ where $G_0$ is an arbitrary deterministic $N\times n$ matrix with no constraint imposed. We define  $G_1=(\psi_{ij}g_{ij})_{ij}$ where $g_{ij}$ are i.i.d. mean zero, variance one,  $K$- sub-Gaussian random variables satisfying an anti-concentration estimate: for some given $\alpha_\eqref{line276definition},\beta_\eqref{line276definition}>0,$
$$
\sup_{\lambda\in\mathbb{R}}\mathbb{P}\{|g_{ij}-\lambda|\leq\alpha_\eqref{line276definition} \}\leq 1-\beta_\eqref{line276definition}.
$$

The matrix $G$ depends on its deterministic part $G_0$ and the label ${\Psi}$, but we shall use the notation $G$ to simplify the notations and keep the dependence on ${\Psi}$ implicit.
\end{Definition}
A lower bound on $\sigma_{min}(G)$ depends on the labels ${\Psi}$, as, for example, if there are too many $\psi_{ij}=0$ so that $G$ has a zero column, then $\sigma_{min}(G)=0$. We extract a high probability event and show that most labels $\Psi$ lead to a good lower bound on $\sigma_{min}(G).$

For a label $\Psi\in \{0,1\}^{N\times n}$ we denote by $P_{\Psi}$ the probability distribution of $G$ conditioning on its label being $\Psi$. More precisely, for any event $\mathcal{A}$ in the $\sigma$-algebra $\mathcal{F}$ of the probability space $(\Omega,\mathcal{F},\mathbb{P})$,
we define \begin{equation}\label{definition1w}
\mathbb{P}_{\Psi}(G\in\mathcal{A}):=\frac{\mathbb{P}(G\in \mathcal{A}, G\text{ has label }\Psi)}{\mathbb{P}(G\text{ has label }\Psi)}.
\end{equation} This definition makes sense because, as $\Psi$ has distribution \eqref{generationoflabels}, $\mathbb{P}(G\text{ has label }\Psi)>0$ for any $\Psi\in\{0,1\}^{N\times n}.$

More generally, for a $\mathcal{F}_\Psi$-measurable event $\Omega_\Psi$ (i.e. an event depending only on the choice of label $\Psi$), we define analogously, for any $\mathcal{A}\in\mathcal{F}$,
\begin{equation}\label{definition2label}
\mathbb{P}_{\Omega_\Psi}(G\in \mathcal{A}):=\frac{\mathbb{P}(G\in \mathcal{A}, \Psi\in\Omega_\Psi)}{\mathbb{P}(\Psi\in\Omega_\Psi)}.
\end{equation}

In the next theorem, the distribution of random label $\Psi$ does not need to follow \eqref{generationoflabels}.

\begin{theorem}\label{universalbacks}
    Fix any $\delta>1$,  consider $N\geq\delta n$ and let $G$ be as in Definition \eqref{line276definition}. Then there exists a subset of labels $\mathcal{D}\subset\{0,1\}^{N\times n}$ which satisfies, for a randomly generated label $\Psi$ such that each $\psi_{ij}$ is i.i.d. taking value in $\{0,1\}$ with $\mathbb{P}(\psi_{ij}=1)\geq \frac{3\delta+1}{4\delta}$, then there exists a positive integer $N_\eqref{universalbacks}\in\mathbb{N}$ such that for any $N\geq N_\eqref{universalbacks}$, 
    \begin{enumerate}
    \item For this random choice of label $\Psi$, $$\mathbb{P}(\Psi\in\mathcal{D})\geq 1-\exp(-w_\eqref{universalbacks}N).
    $$ \item For any fixed label $\Psi\in \mathcal{D}$ and any deterministic $N\times n$ matrix $G_0$, we have
    $$
\mathbb{P}_{\Psi}(\sigma_{min}(G)\leq h_\eqref{universalbacks}\sqrt{N})\leq \exp(-w_\eqref{universalbacks}N),
    $$\end{enumerate}
    where the constants $h_\eqref{universalbacks}>0,w_\eqref{universalbacks}>0,N_\eqref{universalbacks}\in\mathbb{N}$ depend only on $\delta$, $\alpha_\eqref{line276definition}$ and $\beta_\eqref{line276definition}$ and they are independent of the choice of the deterministic part $G_0$ and the label $\Psi\in\mathcal{D}$.
\end{theorem}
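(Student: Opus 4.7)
The plan is to follow the framework of Tikhomirov's proof of Theorem \ref{theorem1.2345} \cite{tikhomirov2016smallest}, adapted to the masked setting where $G_1$ has entries zeroed at positions with $\psi_{ij}=0$. First I would define $\mathcal{D}\subset\{0,1\}^{N\times n}$ to consist of labels $\Psi$ satisfying: (a) every column sum and every row sum of $\Psi$ is close to its expectation, namely $\sum_i\psi_{ij}\geq\mu_1 N$ for each $j$ and $\sum_j\psi_{ij}\geq\mu_1 n$ for each $i$, with $\mu_1\in(1/\delta,\,(3\delta+1)/(4\delta))$; and (b) a spreading condition: for every subset $S\subseteq[n]$ with $|S|\geq\rho n$, at least $\mu_1 N$ rows $i$ satisfy $\sum_{j\in S}\psi_{ij}\geq\mu_1|S|$. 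Since the entries $\psi_{ij}$ are independent Bernoulli with mean at least $(3\delta+1)/(4\delta)>3/4$, each of these events holds with exponentially small failure probability by Hoeffding's inequality, and a union bound over the at most $e^{O(N)}$ such events (using $2^n\leq e^{N/\delta}$) yields $\mathbb{P}(\Psi\in\mathcal{D})\geq 1-e^{-w_\eqref{universalbacks}N}$, which is claim (1).

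For claim (2), fix any $\Psi\in\mathcal{D}$ and any deterministic $G_0$. For $v\in\mathbb{S}^{n-1}$ define $\sigma_i^2(v):=\sum_j\psi_{ij}v_j^2$. The identity $\sum_i\sigma_i^2(v)=\sum_j v_j^2\sum_i\psi_{ij}\geq\mu_1 N$ together with $\sigma_i^2(v)\leq 1$ yields $|\{i:\sigma_i^2(v)\geq\mu_1/2\}|\geq\mu_1 N/2$. On each such good row $i$, the assumed anti-concentration of $g_{ij}$ combined with a Kolmogorov--Rogozin type estimate for sums of independent random variables gives
\[
\mathbb{P}_\Psi\bigl(|(Gv)_i|\leq\tau_0\bigr)\leq 1-\beta_0
\]
for constants $\tau_0,\beta_0>0$ depending only on $\alpha_\eqref{line276definition},\beta_\eqref{line276definition},K,\mu_1$, uniformly over the arbitrary shift $(G_0 v)_i$. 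Independence of rows and Hoeffding's inequality then yield the pointwise bound $\mathbb{P}_\Psi(\|Gv\|\leq h_0\sqrt{N})\leq e^{-w_1 N}$ for each fixed $v$.

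The main obstacle is upgrading this pointwise estimate to a uniform bound over $v\in\mathbb{S}^{n-1}$, because $\|G_0\|_\text{op}$ is uncontrolled and a naive $\epsilon$-net argument fails. Following Tikhomirov, I would partition $\mathbb{S}^{n-1}$ into compressible vectors (concentrated on $\leq\eta n$ coordinates) and incompressible vectors. For the compressible part, pass to the $N\times\lceil\eta n\rceil$ sub-matrix on the significant columns; its aspect ratio $\geq\delta/\eta$ is very large, and a direct $\epsilon$-net of size $(C/\epsilon)^{\eta n}$ together with the pointwise bound suffices, since the operator norm of the random sub-block of $G_1$ is $O(\sqrt{N})$ by sub-Gaussianity and the deterministic shift simply reparametrizes the pointwise estimate. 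For incompressible vectors, I would invoke Tikhomirov's layered net approach: the discretization $\|G(v-v')\|$ is reduced to an anti-concentration estimate on $G_1(v-v')$ alone, by absorbing $G_0(v-v')$ into the deterministic shift of the pointwise estimate at each net point, so that only the sub-Gaussian operator norm of $G_1$ enters, and the cardinality $e^{O(n)}$ of the net is defeated by the pointwise exponent $e^{-w_1 N}$ with $N\geq\delta n$.

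The hardest step will be this last uniform bound for incompressible vectors: avoiding $\|G_0\|_\text{op}$ requires Tikhomirov's full anti-concentration toolkit, including threshold/LCD analysis and layered nets. Under $\Psi\in\mathcal{D}$, the masking removes only a controlled fraction of entries per row and per column, so the spreading and anti-concentration profiles needed for these arguments are preserved, and Tikhomirov's proof can be imported with only minor modifications. Choosing $h_\eqref{universalbacks},w_\eqref{universalbacks},N_\eqref{universalbacks}$ as the minima of the constants produced in each step then yields claim (2), with dependence only on $\delta,\alpha_\eqref{line276definition},\beta_\eqref{line276definition}$ as asserted; in particular the output is independent of $G_0$ and of the particular label $\Psi\in\mathcal{D}$.
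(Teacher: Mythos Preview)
Your overall strategy---follow Tikhomirov, decompose the sphere, combine pointwise anti-concentration with nets---is correct in spirit, but the crucial uniformization step has a genuine gap. The mechanism you describe for neutralizing $\|G_0\|$, namely ``absorbing $G_0(v-v')$ into the deterministic shift of the pointwise estimate at each net point,'' does not work: the pointwise anti-concentration of $(G_1 v')_i$ is indeed uniform in any \emph{single} deterministic shift, but as $v$ ranges over a neighborhood of $v'$ the shifts $(G_0 v)_i$ sweep an unbounded set (since $\|G_0\|$ is arbitrary), and no simultaneous bound over all such $v$ follows. Tikhomirov's actual device (Proposition~\ref{proposition3.13.13.1} in the paper, i.e.\ \cite{tikhomirov2016smallest}, Proposition~3) is different: for each net point $y'$ one takes the coordinate subspace $E_{y'}=\operatorname{span}\{e_j:j\in\operatorname{supp}y'\}$ and lower bounds $\operatorname{dist}\bigl(G_1 y',\, G(E_{y'}^\perp)+G_0(E_{y'})\bigr)$; the entire range of $G_0$ is thereby absorbed into a fixed $n$-dimensional \emph{subspace} of $\mathbb{R}^N$, not a scalar shift, and only then does the approximation error involve $\|G_1\|=O(\sqrt{N})$ alone. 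This distance-to-subspace event is what must hold at every net point, and it is verified via Corollary~\ref{corollary3.3}.

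This correction also undermines your explicit combinatorial description of $\mathcal{D}$. In the paper, $\mathcal{D}=\Omega_p\cap\Omega_a\cap\Omega_g$ where $\Omega_a$ and $\Omega_g$ are intersections $\bigcap_{y'\in\mathcal{N}}\Omega_{y'}$ over the relevant nets, and $\Omega_{y'}$ is the set of labels for which at least $N\delta^{-1/2}$ rows $i$ satisfy the conditional L\'evy bound $\mathcal{Q}_\Psi\bigl(\sum_j\psi_{ij}g_{ij}y'_j,\,h/2\bigr)\leq 1-\tau$ (Lemma~\ref{lemma3.10}); the paper explicitly remarks that $\mathcal{D}$ is inexplicit and net-dependent. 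Your spreading condition (b) covers only subsets of size $\geq\rho n$, but the net vectors in the argument have supports of size $\sqrt{N}$ (almost-sparse case, Proposition~\ref{proposition3.11}) or $m=\lceil\tau_0 n/2^{\ell/4}\rceil$ (generic case, Proposition~\ref{propos13.13}), so (b) gives no information there, and the column-sum condition (a) alone does not guarantee that any particular row sees enough of $\operatorname{supp}y'$. Relatedly, the paper's three-way split into peaky / almost $\sqrt{N}$-sparse / the rest (rather than compressible/incompressible) is what makes the net cardinalities small enough to be beaten by the pointwise rate at each stage.
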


The proof of Theorem \ref{universalbacks}
draws several ideas from \cite{tikhomirov2016smallest}, yet we cannot use its main result directly.
The difference is we need to fix a single label $\Psi$ a-priori and show $\sigma_{min}(G)$ is lower bounded uniformly for each $\Psi\in\mathcal{D}$, whereas \cite{tikhomirov2016smallest} does not involve this quasi-random selection procedure in its statement of main result. Nonetheless, the validity of our quasi-random selection is implicit in the proof in \cite{tikhomirov2016smallest} so that we only need to wrap up its proof in a different way. The construction of the subset $\mathcal{D}$
is inexplicit and depends on the choice of several families of $\epsilon$-nets. We will defer the proof of Theorem \ref{universalbacks} to Section \ref{section3.4theends}, and use it to now prove the main result of this paper.

\subsection{Proof of main results: bounded aspect ratio}
\label{section2.66}Now we are ready to prove the main theorems of this paper assuming Theorem \ref{derivation2.414} and Theorem \ref{universalbacks}.

\begin{proof}[\proofname\ of Theorem \ref{theorem1.56}] The upper bound is already proven in Theorem \ref{theorem2.22}.

For the lower bound, we take $\tilde{\epsilon}_n$ satisfying \eqref{choicelowerbound}. Then we can rewrite the resampling decomposition \eqref{resamplingdecision} as
$$n^{-\frac{1}{\alpha}+(1-\frac{\alpha}{2})\tilde{\epsilon}_n}X\overset{\text{law}}{\equiv} \Psi\circ n^{-\frac{1}{\alpha}+(1-\frac{\alpha}{2})\tilde{\epsilon}_n}Y+(\mathbf{1}-\Psi)\circ n^{-\frac{1}{\alpha}+(1-\frac{\alpha}{2})\tilde{\epsilon}_n}Z.$$ By our choice of $\tilde{\epsilon}_n$, Lemma \ref{lemma2.34} implies that, if we set
\begin{equation}\label{symbolT1}
T_1:= n^{-\frac{1}{\alpha}+(1-\frac{\alpha}{2})\tilde{\epsilon}_n}\Psi\circ Y,\quad T_0:= n^{-\frac{1}{\alpha}+(1-\frac{\alpha}{2})\tilde{\epsilon}_n}(\mathbf{1}-\Psi)\circ Z
,\end{equation} then $T=T_0+T_1$ satisfies the assumption of Theorem \ref{derivation2.414} with $q=\mathbf{c}^{-1/2}(\log n)^{-2}$ and for some $M>0$ depending only on $\alpha,C_\ell,C_u$.
    
    Then Theorem \ref{derivation2.414} implies that, for any $D>0$ we can find a constant $C_D>0$ such that, uniformly over the choice of label $\Psi\in\{0,1\}^{N\times n}$,
\begin{equation}\label{line498}
\mathbb{P}_{\Psi}(|\sigma_{min}(T)-\sigma_{min}(G)|\geq C_D\mathbf{c}^{-1/6})\leq N^{-D},\end{equation}
where $G$ is the Gaussian model associated to $T$, as defined in Theorem \ref{derivation2.414}.

  Then we apply Theorem \ref{universalbacks} to lower bound $\sigma_{min}(G)$. We may assume $T_1$ has normalized entry variance $\frac{1}{n}$: this can be achieved by multiplying a constant that depends only on $C_\ell,C_u,\alpha$, There exists $N_0\geq N_{\eqref{universalbacks}}$ such that for all $N\geq N_0$, $\mathbb{P}(\xi_{ij}=1)\geq\frac{3\delta+1}{4}$. Then by Theorem \ref{universalbacks} we find a set of labels $\mathcal{D}$ with $\mathbb{P}(\Psi\in\mathcal{D})\geq 1-\exp(-w_\eqref{universalbacks}N)$ and for each $\Psi\in\mathcal{D}$, Theorem \ref{universalbacks} implies the existence of $h>0$ such that
  \begin{equation}\label{line505}
\mathbb{P}_{\Psi}(\sigma_{min}(G)\leq h)\geq 1-\exp(-w_\eqref{universalbacks}N),
  \end{equation}
where $h>0$ depends only on $\delta,\alpha,C_\ell,C_u$. We have dropped the $\sqrt{N}$ factor here as in this proof, the variance of each entry of $G$ is $\frac{1}{n}$ whereas in Theorem \ref{universalbacks} the variance of each entry of $G$ is 1, and also note that $N/n\to\delta<\infty$.

    We set $N_0>0$ and $\mathbf{c}>0$ sufficiently large, so that $C_D\mathbf{c}^{-\frac{1}{6}}\leq \frac{h}{2}$ and $\exp(-w_{\ref{universalbacks}}N)\leq N^{-D}$ for all $N\geq N_0$. Then for all $N\geq N_0$ and all $W\in\mathcal{D}$,
    \begin{equation}\label{line513}
\mathbb{P}_W(\sigma_{min}(T)\leq \frac{h}{2})\leq N^{-D}+\exp(-w_{\ref{universalbacks}}N)\leq 2N^{-D}.
    \end{equation}

Now we can conclude the proof. For all $N\geq N_0$,
$$\begin{aligned}
&\mathbb{P}(\sigma_{min}(X)\leq \frac{h}{2}\mathbf{c}^{\frac{\alpha-2}{2\alpha}}n^{\frac{1}{\alpha}}(\log n)^\frac{2(\alpha-2)}{\alpha})=\mathbb{P}(\sigma_{min}(T)\leq \frac{h}{2})\\&\leq\mathbb{P}(\Psi\not\in D)+\sum_{W\in\mathcal{D}}\mathbb{P}(\Psi=W)\mathbb{P}_W(\sigma_{min}(T)\leq \frac{h}{2})\leq \exp(-w_{\ref{universalbacks}}N)+2N^{-D}\leq 3N^{-D}.
\end{aligned}$$ This completes the proof.
\end{proof}

\subsection{Diverging aspect ratio}\label{divergingaspect}

Now we drop the assumption that $N/n$ is bounded. We sample $\Psi$ and $Y,Z$ slightly differently by using threshold depending on $N$ instead of $n$. More precisely, we now define
\begin{equation}\label{generationoflabelsunbound}
\psi_{ij}=\begin{cases}1,\quad\text{with probability } \mathbb{P}(|x_{ij}|\leq N^{\frac{1}{\alpha}-\tilde{\epsilon}_N}),\\0,\quad
\text{with probability } \mathbb{P}(|x_{ij}|\geq N^{\frac{1}{\alpha}-\tilde{\epsilon}_N}),
\end{cases}    \end{equation}
and for each interval $\tilde{I}\subset\mathbb{R}$, we define
\begin{equation}\label{resampling1}
\mathbb{P}(y_{ij}\in\tilde{I})=\frac{\mathbb{P}\left(x_{ij}\in\tilde{I}\cap[-N^{\frac{1}{\alpha}-\tilde{\epsilon}_N},N^{\frac{1}{\alpha}-\tilde{\epsilon}_N}]\right)}{\mathbb{P}(x_{ij}\in[-N^{\frac{1}{\alpha}-\tilde{\epsilon}_N},N^{\frac{1}{\alpha}-\tilde{\epsilon}_N}])},\end{equation}
where $\tilde{\epsilon}_N$ is a cutoff value to be determined later, which is either a function of $N$, or a function of both $N$ and $n$. We define the law of $z_{ij}$ similarly, by swapping all factors of $n$ by $N$ in the definition \eqref{lawofz}.

The proof of Theorem \ref{lowergeneral} is very similar to that of Theorem \ref{theorem1.56}.

\begin{proof}[\proofname\ of Theorem \ref{lowergeneral}] We only sketch the places where we need to do differently compared to the proof of Theorem \ref{theorem1.56}.

We again take the truncation \eqref{generationoflabelsunbound} and resampling procedure \eqref{resampling1} to define the label $\Psi$, with the procedure applied to $x_{ij}$. To account for the coefficients, we replace $y_{ij}$ by $y_{ij}a_{ij}$ and $z_{ij}$ by $z_{ij}a_{ij}$ in a similar decomposition as in \eqref{resamplingdecision}. That is, we now have the equivalence in law $A\overset{\text{law}}{\equiv} {\Psi}\circ Y+(\mathbf{1}-{\Psi})\circ Z$, where $Y=(a_{ij}y_{ij})$ and $Z=(a_{ij}z_{ij})$.

Then we define $T_0,T_1$ via 

$$ T_1:= N^{-\frac{1}{\alpha}+(1-\frac{\alpha}{2})\tilde{\epsilon}_N}\Psi\circ Y,\quad 
T_0:= N^{-\frac{1}{\alpha}+(1-\frac{\alpha}{2})\tilde{\epsilon}_N}((\mathbf{1}-\Psi)\circ Z+B)
$$ so that 
$$
A+B=N^{\frac{1}{\alpha}+(\frac{\alpha}{2}-1)\tilde{\epsilon}_n}T=N^{\frac{1}{\alpha}+(\frac{\alpha}{2}-1)\tilde{\epsilon}_n}(T_0+T_1)
$$
where we absorb the deterministic part $B$ into $T_0$ and replace $n$ by $N$ in both $T_0,T_1$.

We take $\tilde{\epsilon}_N$ such that, for some large $\mathbf{c}>0$,
$$N^{\alpha\tilde{\epsilon}_N}=\mathbf{c}(\log N)^4.$$

As can be checked similarly to Lemma
\ref{lemma2.34}, each entry of $T_1$ has mean zero, variance bounded by  $CN^{-1}$ for some $C>0$ depending on $A_2,C_\ell,C_u$, and each entry is almost surely bounded by $\mathbf{c}^{-1/2}(\log N)^{-2}$.
Then we apply Theorem \ref{derivation2.414} to $\sqrt{\frac{N}{n}}T=\sqrt{\frac{N}{n}}(T_0+T_1)$, where we multiply a $\sqrt{\frac{N}{n}}$ factor because Theorem \ref{derivation2.414} took a normalization of entry variance to be $n^{-1}$. Again let $\sqrt{\frac{N}{n}}G=\sqrt{\frac{N}{n}}(G_0+G_1)$, where $G$ is the Gaussian model of $T$ defined in Theorem \ref{derivation2.414}. In applying Theorem \ref{derivation2.414} we will take $q=\mathbf{c}^{-1/2}\sqrt{\frac{N}{n}}(\log N)^{-2}$. Then for any $D>0$ we can find a constant $C_D>0$ such that, uniformly over the choice of label $\Psi\in\{0,1\}^{N\times n}$,
    \begin{equation}
\label{495proofofupperbound}
\mathbb{P}_{\Psi}\left(\left|\sigma_{min}(\sqrt{\frac{N}{n}}T)-\sigma_{min}(\sqrt{\frac{N}{n}}G)\right|\geq  C_D\sqrt{\frac{N}{n}}\mathbf{c}^{-1/6}\right)\leq N^{-D}.\end{equation}

The next step is to apply Theorem \ref{universalbacks} to prove that we can find a set of labels $\mathcal{D}$ with $\mathbb{P}(\Psi\in\mathcal{D})\geq 1-\exp(-w_\eqref{universalbacks}N)$, and that for any $W\in \mathcal{D}$ we have
  $$
\mathbb{P}_W(\sigma_{min}(\sqrt{N}G)\leq \frac{h}{2}\sqrt{N})\leq 2N^{-D} 
    $$ for some $h=h(C_\ell,C_u,A_1,A_2,\delta,\alpha)>0$ (The variance of each entry of $\sqrt{N}G$ is normalized to be $\sim 1$). Combined with \eqref{495proofofupperbound} and taking $\mathbf{c}>0$ sufficiently large, this completes the proof of Theorem \ref{lowergeneral} following exactly the same lines as in the proof of  Theorem \ref{theorem1.56}.

There is an issue when applying Theorem \ref{universalbacks} to $G$: the entries of $G_1$ are not identically distributed. However, by assumption on $a_{ij}$ the variance of each entry of $\sqrt{N}G_1$ is uniformly bounded from below. As the Gaussian distribution is infinitely divisible, we can decompose $G_1$ as $G_1=G_2+G_3$ with $G_2$, $G_3$ independent and entries of $G_2$ are i.i.d. Then we can apply Theorem  \ref{universalbacks} to $(G_0+G_3)+G_2$ and finish the proof.
\end{proof}

Finally we prove the upper bound $\sigma_{min}(X)$ in Proposition \ref{propositionunbounded} when $N/n$ is unbounded.

\begin{proof}[\proofname\ of Proposition \ref{propositionunbounded}] This will be a careful adaptation of the proof of Theorem \ref{theorem2.22}.
Recall $\psi_j$ denotes the $j$-th column of $\Psi$, then for each $j$
$$
\mathbb{P}(\psi_j=\mathbf{1}_N)\geq \exp(-C_uN^{\alpha\tilde{\epsilon}_N}),
$$ and since different $\psi_j$ are independent, 
$$
\mathbb{P}(\psi\text{ has no all-ones column})\leq \exp(-n\exp(-C_u N^{\alpha\tilde{\epsilon}_N})).
$$
Denoting by $Y^\mathbf{m}$ the minor of $Y$ after removing all columns $j$ that are not all-ones column, i.e., those $j$ such that $\psi_j\neq \mathbf{1}_N$. Then we have $\sigma_{min}(X)\leq\|Y^\mathbf{m}\|$.

Denote by $(Y^\mathbf{m})_{i\cdot}$, resp. $(Y^\mathbf{m})_{\cdot j}$ the $i$-th row, resp. the $j$-th column of $Y^\mathbf{m}$. Then by Rosenthal inequality for any $q>0$, 
$$\mathbb{E}[\|(Y^{\mathbf{m}})_{i\cdot}\|_2^q]\leq C^q (N^{\frac{1}{\alpha}-\tilde{\epsilon}_N})^q(N^{\alpha\tilde{\epsilon}_N})^\frac{q}{2}+q^\frac{q}{4}(N^{\alpha\tilde{\epsilon}_N})^{\frac{q}{4}},
$$ where $C>0$ depends only on $C_u$, $\alpha$ and $\delta$. We take $q=2\log N$ and apply Lemma \ref{kemma2.2spectralradius} with this choice of $q$ to deduce
$$
\mathbb{E}\|Y^{\mathbf{m}}\|_2^q\leq 2C^q(N^{\frac{1}{\alpha}-\tilde{\epsilon}_N})^qN(N^{\alpha\tilde{\epsilon}_N})^\frac{q}{2}.
$$
Then applying Markov's inequality, for any $D>0$, we can find $C_D>0$ such that
$$
\mathbb{P}\{\|Y^{\mathbf{m}}\|\geq C_DN^{\frac{1}{\alpha}-(1-\frac{\alpha}{2})\tilde{\epsilon}_N}\}\leq N^{-D}+\exp(-n\exp(-C_u N^{\alpha\tilde{\epsilon}_N}))
$$ where $C_D>0$ depending on $D,C_u,C_\ell$ and $\delta$.

Now we take some $\mathbf{b}\in(0,1)$ and define $\tilde{\epsilon}_N$ via \begin{equation}
N^{\alpha\tilde{\epsilon}_N}=\frac{(1-\mathbf{b})\log n}{ C_u}.
\end{equation} 
This choice of $\tilde{\epsilon}_N$ completes the proof.
\end{proof}

\subsection{The case of a slow-varying tail} In this section we outline the proof of Theorem \ref{slowvaryingtails}, highlighting the necessary changes compared to the proof of Theorem \ref{theorem1.56}. 
\begin{proof}[\proofname\ of Theorem \ref{slowvaryingtails}]
In this part we assume the atom distribution $\zeta$ satisfies \eqref{slowvaryingtails}. Without loss of generality we assume $L(t)\geq 1$ for all $t\geq 1$. Consider the same decomposition 
$X={\Psi}\circ Y+(\mathbf{1}-{\Psi})\circ Z$. We first compute the variance of $y_{ij}$: for any $\tilde{\epsilon}_n=o(1),$ we must have 
$\mathbb{P}(|y_{ij}|\leq n^{\frac{1}{\alpha}-\tilde{\epsilon}_n})=1-o(1)$, so that
$$\begin{aligned}
\frac{1}{2-\alpha}n^{\frac{2-\alpha}{\alpha}-(2-\alpha)\tilde{\epsilon}_n}\leq \mathbb{E}[|y_{ij}|^2]&\leq \frac{3}{2-\alpha}L(n^\frac{1}{\alpha}) n^{\frac{2-\alpha}{\alpha}-(2-\alpha)\tilde{\epsilon}_n}
.\end{aligned}
$$Then we take a renormalization and write, with the constant $C_n=O(L(n^\frac{1}{\alpha})),C_n=\Omega(1)$,
$$
\mathbb{E}[|n^{-\frac{1}{\alpha}+(1-\frac{\alpha}{2})\tilde{\epsilon}_n}y_{ij}|^2]=C_n\cdot n^{-1}.
$$   Let $\xi_n$ be the distribution of $C_n^{-1/2}n^{-\frac{1}{\alpha}+(1-\frac{\alpha}{2})\tilde{\epsilon}_n}y_{ij}$, then $$\mathbb{E}[\xi_n]=0,\quad \mathbb{E}[|\xi_n|^2]= n^{-1} ,\quad |\xi_n|\leq C_n^{-1/2}n^{-\frac{\alpha}{2}\epsilon_n} \text{ a.s. }.$$
We again take $$n^{\alpha\tilde{\epsilon}_n}=\mathbf{c}(\log n)^4,$$
and we follow verbatim the proof in Section \ref{section2.66} with the following choice of parameters:
$$T=T_1+T_0,\quad 
T_1:= C_n^{-1/2}n^{-\frac{1}{\alpha}+(1-\frac{\alpha}{2})\tilde{\epsilon}_n}\Psi\circ Y,\quad T_0:= C_n^{-1/2}n^{-\frac{1}{\alpha}+(1-\frac{\alpha}{2})\tilde{\epsilon}_n}(\mathbf{1}-\Psi)\circ Z.
$$ Then whenever $\mathbf{c}>0$ is large enough, we can guarantee that \eqref{line498}, \eqref{line505}, \eqref{line513} are still true in our setting. Thus we have, for all $N$ sufficiently large, 
$$\begin{aligned}
\mathbb{P}(\sigma_{min}(X)&\leq \frac{h}{2}C_n^{1/2}\mathbf{c}^{\frac{\alpha-2}{2\alpha}}n^{\frac{1}{\alpha}}(\log n)^\frac{2(\alpha-2)}{\alpha})=\mathbb{P}(\sigma_{min}(T)\leq \frac{h}{2})\\&\leq\mathbb{P}(\Psi\not\in D)+\sum_{W\in\mathcal{D}}\mathbb{P}(\Psi=W)\mathbb{P}_W(\sigma_{min}(T)\leq \frac{h}{2})\leq 3N^{-D}.
\end{aligned}$$
This completes the proof since $C_n=\Omega(1)$.
\end{proof}

\section{Proof of technical results}

This section collects the proof of Theorem \ref{derivation2.414} and \ref{universalbacks}. Section \ref{subsection3.1} towards Section \ref{section3.4theends} contain the proof of Theorem \ref{universalbacks} and Section \ref{section3.5} contains the proof of Theorem \ref{derivation2.414}.

\subsection{Anti-concentration: preliminary results}\label{subsection3.1}

As a preparation for the proof of Theorem \ref{universalbacks}, we recall several facts from \cite{tikhomirov2016smallest}. The first is about bounding the least singular value without an operator norm control.

For a subspace $E\subset\mathbb{R}^n$ we denote by $\operatorname{proj}_E$ the orthogonal projection of $\mathbb{R}^n$ onto $E$, and denote by $E^\perp$ the orthogonal complement of $E$ in $\mathbb{R}^n$.

\begin{Proposition}\label{proposition3.13.13.1}[\cite{tikhomirov2016smallest},Proposition 3]
    Let $D_1,D_2$ be $N\times n$ (deterministic) matrices and $D=D_1+D_2$. Fix a set of vectors $S\subset\mathbb{S}^{n-1}$. Assume that we can find $h,\epsilon>0$, some subset $\mathcal{N}\subset\mathbb{R}^n$, and linear subspaces $\{E_{y'}\subset\mathbb{R}^n:y'\in\mathcal{N}\}$ such that $y'\in E_{y'}$ for all $y'\in\mathcal{N}$, and that (1) For all $y'\in\mathcal{N}$, $$
\operatorname{dist}(D_1y',D(E_{y'}^\perp)+D_2(E_{y'}))\geq h; 
    $$

    (2) For each $y\in S$ we can find $y'\in\mathcal{N}$ with 
    $$
\|\operatorname{Proj}_{E_{y'}}(y)-y'\|\leq\epsilon.
    $$  Then we have 
    $$
\inf_{y\in S}\|Dy\|\geq h-\epsilon\|D_1\|.    
$$\end{Proposition}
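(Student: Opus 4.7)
The plan is a direct distance-from-subspace argument. Given $y \in S$, I will use hypothesis (2) to locate $y' \in \mathcal{N}$ whose position approximates the projection of $y$ onto $E_{y'}$, and then decompose $Dy$ so that a single copy of $D_1 y'$ stands alone against the linear subspace for which hypothesis (1) furnishes a quantitative distance lower bound. The remaining pieces must be arranged so that the $\epsilon$-approximation error is hit only by $D_1$ (and not by all of $D$), so that the conclusion features $\|D_1\|$ rather than $\|D\|$.

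Concretely, set $p := \operatorname{Proj}_{E_{y'}}(y)$ and $p^\perp := y - p \in E_{y'}^\perp$, so that $\|p - y'\| \leq \epsilon$ by hypothesis (2). Writing $y = y' + (p - y') + p^\perp$ and applying $D = D_1 + D_2$, I split
$$Dy \;=\; D_1 y' \;+\; D_1(p - y') \;+\; W, \qquad W := D_2 p + D p^\perp.$$
Since $p \in E_{y'}$ we have $D_2 p \in D_2(E_{y'})$, and since $p^\perp \in E_{y'}^\perp$ we have $D p^\perp \in D(E_{y'}^\perp)$; hence $W$ (and therefore $-W$) lies in the linear subspace $D(E_{y'}^\perp) + D_2(E_{y'})$. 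Hypothesis (1) then gives
$$\|D_1 y' + W\| \;=\; \|D_1 y' - (-W)\| \;\geq\; \operatorname{dist}\bigl(D_1 y',\, D(E_{y'}^\perp) + D_2(E_{y'})\bigr) \;\geq\; h,$$
while trivially $\|D_1(p - y')\| \leq \|D_1\| \cdot \|p - y'\| \leq \epsilon \|D_1\|$. The triangle inequality then yields $\|Dy\| \geq h - \epsilon \|D_1\|$, and taking the infimum over $y \in S$ finishes the argument.

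The only delicate point is the grouping in the displayed decomposition: the $p^\perp$ component must be kept together as $Dp^\perp$ rather than split through $D = D_1 + D_2$, because hypothesis (1) allows $D(E_{y'}^\perp)$ in the target subspace but not $D_1(E_{y'}^\perp)$ alone; symmetrically, the approximation error $p - y'$ must be acted on only by $D_1$, with the companion $D_2(p - y')$ absorbed into $D_2(E_{y'})$. This forced grouping is precisely what makes the proposition useful in the paper's application, where $D_2$ will carry an uncontrolled deterministic perturbation and an $\|D\|$ on the right-hand side would be useless.
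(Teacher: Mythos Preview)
Your proof is correct and is essentially the standard argument: the decomposition $Dy = D_1 y' + D_1(p-y') + \bigl(D_2 p + D p^\perp\bigr)$ is exactly the right grouping, and your observation that the last bracket lies in the linear subspace $D(E_{y'}^\perp) + D_2(E_{y'})$ is the whole point. The paper itself does not supply a proof of this proposition---it is quoted verbatim from \cite{tikhomirov2016smallest}, Proposition~3---and the argument there proceeds along the same lines as yours.
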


We also need a few facts about anti-concentration and subspace projection. For a random variable $\xi$ we define its Lévy concentration function via 

\begin{equation}
\label{levyconcentrationf}\mathcal{Q}(\xi,\alpha)=\sup_{\lambda\in\mathbb{R}}\mathbb{P}(|\xi-\lambda|\leq\alpha).
\end{equation}

We will also need a conditional version of Lévy concentration function. For any label $\Psi\in\{0,1\}^{N\times n}$ or any event $\Omega_\Psi\in\mathcal{F}_\Psi$ we define analogously 
\begin{equation}\label{631analogously}
\mathcal{Q}_\Psi(\xi,\alpha)=\sup_{\lambda\in\mathbb{R}}\mathbb{P}_\Psi(|\xi-\lambda|\leq\alpha),\quad \mathcal{Q}_{\Omega_\Psi}(\xi,\alpha)=\sup_{\lambda\in\mathbb{R}}\mathbb{P}_{\Omega_\Psi}(|\xi-\lambda|\leq\alpha), 
\end{equation}where we recall \eqref{definition1w}, \eqref{definition2label} for the notation of $\mathbb{P}_\Psi$ and $\mathbb{P}_{\Omega_\Psi}.$

\begin{theorem}\label{theorem3.2}[\cite{MR131894}]
    Fix $k\in\mathbb{N}$ and consider $\xi_1,\cdots,\xi_k$ independent random variables. Fix real numbers $h_1,\cdots,h_k>0$. Then given any $h\geq \max_{j=1,\cdots,k}h_j$,
    $$
\mathcal{Q}(\sum_{j=1}^k\xi_j,h)\leq C_{\eqref{theorem3.2}}h\left(\sum_{j=1}^k(1-\mathcal{Q}(\xi_j,h_j))h_j^2\right)^{-1/2}
    $$
    for a universal constant $C_{\eqref{theorem3.2}}>0$.
\end{theorem}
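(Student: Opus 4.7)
The plan is to prove this as the classical Kolmogorov--Rogozin concentration inequality, via a characteristic function argument. First I would invoke the Esséen smoothing inequality, which gives
\[
\mathcal{Q}(S, h) \leq C h \int_{-1/h}^{1/h} |\varphi_S(t)|\,dt
\]
for $S := \sum_j \xi_j$ and $\varphi_S(t) := \mathbb{E}[e^{itS}]$; this reduces matters to controlling $|\varphi_S|$ pointwise on $[-1/h, 1/h]$. By independence $|\varphi_S(t)| = \prod_{j=1}^k |\varphi_j(t)|$, where $\varphi_j$ is the characteristic function of $\xi_j$.

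The key analytic input is a lower bound on $1 - |\varphi_j(t)|^2$ in terms of $\mathcal{Q}(\xi_j, h_j)$. I would symmetrize by introducing an independent copy $\xi_j'$, giving $|\varphi_j(t)|^2 = \mathbb{E}[\cos(t(\xi_j - \xi_j'))]$. Conditioning on $\xi_j'$ and invoking the definition of the Lévy concentration function yields $\mathbb{P}(|\xi_j - \xi_j'| > h_j) \geq 1 - \mathcal{Q}(\xi_j, h_j)$. Combining this with the elementary interpolation $1 - \cos(x) \geq c\min(x^2, 1)$ produces
\[
1 - |\varphi_j(t)|^2 = \mathbb{E}[1 - \cos(t(\xi_j - \xi_j'))] \geq c(1 - \mathcal{Q}(\xi_j, h_j)) \min(t^2 h_j^2, 1).
\]
For $|t| \leq 1/h \leq 1/h_j$ the minimum collapses to $t^2 h_j^2$, using the hypothesis $h \geq \max_j h_j$.

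Next I would use the pointwise inequality $|\varphi_j(t)| \leq \exp(-\tfrac12(1 - |\varphi_j(t)|^2))$ and multiply across $j$, obtaining
\[
|\varphi_S(t)| \leq \exp\!\left(-\tfrac{c}{2} t^2 V\right), \qquad V := \sum_{j=1}^k (1 - \mathcal{Q}(\xi_j, h_j)) h_j^2,
\]
valid for $|t| \leq 1/h$. Substituting into the Esséen bound and extending the integration domain to $\mathbb{R}$ to compute the resulting Gaussian integral produces $\mathcal{Q}(S, h) \leq C' h / \sqrt{V}$ in the regime $V \gtrsim h^2$, which is exactly the claimed estimate. In the complementary regime $V \lesssim h^2$ the stated right-hand side already exceeds a universal positive constant, so the inequality holds trivially from $\mathcal{Q}(S, h) \leq 1$ after enlarging $C_{\eqref{theorem3.2}}$ if necessary.

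The main obstacle I anticipate is purely bookkeeping: assembling the three universal constants from the Esséen smoothing inequality, the interpolation $1 - \cos(x) \geq c\min(x^2, 1)$, and the exponential bound $|\varphi_j| \leq e^{-(1-|\varphi_j|^2)/2}$ into a single constant independent of $k$ and of the individual laws of the $\xi_j$. The argument is otherwise a textbook application of the characteristic function method for concentration functions, and the symmetrization step is what allows the bound to depend only on dispersion (measured via $\mathcal{Q}$) rather than on centering of each summand.
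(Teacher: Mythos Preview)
The paper does not supply a proof of this statement; it is quoted from the literature (the classical Kolmogorov--Rogozin inequality, cited as \cite{MR131894}) and used as a black box. Your overall plan---Ess\'een's smoothing lemma, symmetrization, and the exponential bound $|\varphi_j|\le\exp(-\tfrac12(1-|\varphi_j|^2))$---is indeed the standard route.

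There is, however, a real gap. The ``elementary interpolation'' $1-\cos(x)\ge c\min(x^2,1)$ is false as a pointwise inequality: at $x=2\pi$ the left-hand side vanishes. Consequently the step
\[
1-|\varphi_j(t)|^2=\mathbb{E}\bigl[1-\cos\bigl(t(\xi_j-\xi_j')\bigr)\bigr]\ \ge\ c\,(1-\mathcal{Q}(\xi_j,h_j))\,t^2h_j^2
\]
does not follow: on the event $\{|\xi_j-\xi_j'|>h_j\}$ the argument $t(\xi_j-\xi_j')$ is unbounded, and $1-\cos$ can return to zero. This is not merely a constant-tracking issue; if all the $\xi_j$ are lattice-valued with the same span, the product $\prod_j|\varphi_j(t)|$ genuinely equals $1$ at interior points of $[-1/h,1/h]$, so no pointwise Gaussian domination of $|\varphi_S|$ is available.

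The standard repair is to build the needed averaging into the smoothing step. One replaces the crude bound $\mathcal{Q}(S,h)\le Ch\int_{-1/h}^{1/h}|\varphi_S|$ by the kernel version
\[
\mathcal{Q}(S,h)\ \le\ Ch\int_{\mathbb{R}}\Bigl(\tfrac{\sin(ht/2)}{ht/2}\Bigr)^{2}|\varphi_S(t)|\,dt,
\]
and then uses that the \emph{averaged} quantity $1-\operatorname{sinc}(u)=1-\tfrac{\sin u}{u}$ does satisfy $\ge c\min(u^2,1)$. With this substitution (which is exactly how Ess\'een's 1966/1968 proofs proceed) the rest of your bookkeeping goes through unchanged and yields the stated bound with a universal constant.
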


Then we have a convenient bound on anti-concentration of subspace projection
\begin{corollary}\label{corollary3.3}[\cite{tikhomirov2016smallest}, Corollary 6]
    Consider $X=(X_1,\cdots,X_m)$ a vector with independent coordinates with $\mathcal{Q}(X_i,h)\leq 1-\tau$ for some $h>0,\tau\in(0,1)$ and for each $i=1,\cdots,m$. For any $\ell\in\mathbb{N}$, and any $d$-dimensional fixed subspace ($d\leq m$) $E\subset\mathbb{R}^m$, we have 
    $$
\mathcal{Q}(\operatorname{Proj}_EX,h\sqrt{d}/\ell)\leq (C_{\eqref{corollary3.3}}/\sqrt{\ell\tau})^{d/\ell},
    $$ where $C_{\eqref{corollary3.3}}$ is some universal constant.
\end{corollary}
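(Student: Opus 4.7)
The plan is to deduce the $d$-dimensional anti-concentration of $\operatorname{Proj}_E X$ by bootstrapping from the scalar Kolmogorov--Rogozin inequality (Theorem \ref{theorem3.2}) via an iterative tensorization argument.

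The first step is a one-dimensional bound: for any unit vector $u = (u_j) \in E \subset \mathbb{R}^m$, the inner product $\langle u, X \rangle = \sum_j u_j X_j$ is a sum of independent random variables. Applying Theorem \ref{theorem3.2} at scale $\rho \le h$ with the choice $h_j := \rho |u_j|$ gives $\max_j h_j \le \rho$ and $\mathcal{Q}(u_j X_j, h_j) = \mathcal{Q}(X_j, \rho) \le 1 - \tau$, leading to
\[ \mathcal{Q}(\langle u, X\rangle, \rho) \;\le\; \frac{C_{\eqref{theorem3.2}} \rho}{\sqrt{\tau \sum_j \rho^2 u_j^2}} \;=\; \frac{C_{\eqref{theorem3.2}}}{\sqrt{\tau}}, \]
using $\|u\|_2 = 1$. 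This is the scalar building block, valid at every scale $\rho \le h$.

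The second step lifts the scalar bound to the full projection. Fix an orthonormal basis $v_1, \ldots, v_d$ of $E$ and group it into $k := \lceil d/\ell \rceil$ blocks of size $\ell$, with $F_p$ the span of the $p$-th block. Since $E = \bigoplus_p F_p$ orthogonally, one has $\|\operatorname{Proj}_E X - y\|^2 = \sum_p \|\operatorname{Proj}_{F_p} X - y_p\|^2$, so the event inside the concentration function is contained in the intersection of the block events $\{\|\operatorname{Proj}_{F_p} X - y_p\| \le h\sqrt{d}/\ell\}$. A refinement of Theorem \ref{theorem3.2} applied inside each block, where $\ell$ independent scalar contributions accumulate in the denominator of the Kolmogorov--Rogozin bound, yields a single-block concentration estimate of order $C/\sqrt{\ell \tau}$ at this scale. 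Chaining the $k = d/\ell$ block estimates by iteratively conditioning on the coordinates of $X$ that control each successive block --- the hypothesis $\mathcal{Q}(X_j, h) \le 1 - \tau$ is preserved under conditioning because the $X_j$ are independent --- produces the claimed product bound $(C_{\eqref{corollary3.3}}/\sqrt{\ell \tau})^{d/\ell}$.

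The main obstacle is the tensorization step: the block projections $\operatorname{Proj}_{F_p} X$ are not independent, because an orthonormal basis of $E$ generally fails to split $X$ into disjoint coordinate chunks. Turning the naive product bound into a rigorous estimate therefore requires a careful choice of basis together with a conditioning scheme that at each stage consumes only enough randomness of $X$ to control one block while preserving the Kolmogorov--Rogozin hypothesis for the remaining blocks. This is precisely the delicate technical point handled in Tikhomirov's original argument \cite{tikhomirov2016smallest}, which I would follow here in the same spirit.
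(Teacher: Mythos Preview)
The paper does not prove this corollary at all; it is quoted verbatim from \cite{tikhomirov2016smallest} and used as a black box. Since your proposal also ultimately defers the ``delicate technical point'' to Tikhomirov's original argument, in that narrow sense you and the paper agree.

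That said, the intermediate sketch you give is not the right skeleton, and it would not produce the stated bound. Splitting an orthonormal basis of $E$ into $k=\lceil d/\ell\rceil$ blocks $F_p$ of dimension $\ell$ and asserting a per-block bound $\mathcal{Q}(\operatorname{Proj}_{F_p}X,\,h\sqrt{d}/\ell)\le C/\sqrt{\ell\tau}$ is unjustified: the best one can extract from Theorem~\ref{theorem3.2} in a single direction $w\in F_p$ at scale $r=h\sqrt{d}/\ell$ is $Cr/(h\sqrt{\tau}\,\|w\|_2)=C\sqrt{d}/(\ell\sqrt{\tau})$, and there is no mechanism for ``$\ell$ independent scalar contributions'' to accumulate in the denominator when all $\ell$ basis vectors of $F_p$ are supported on the same $m$ coordinates of $X$. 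Moreover, even if the per-block bound held, the $\operatorname{Proj}_{F_p}X$ are not conditionally independent in any useful order, because the decomposition is along $E$ rather than along the coordinates of $X$.

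Tikhomirov's actual argument runs in the opposite direction: one selects $k=\lceil d/\ell\rceil$ \emph{coordinates} $j_1,\dots,j_k\in\{1,\dots,m\}$ (columns of the projection matrix) together with associated directions in $E$ arranged in a triangular structure, so that after conditioning on $\{X_j:j\notin\{j_1,\dots,j_k\}\}$ the events decouple genuinely via the independence of $X_{j_1},\dots,X_{j_k}$, and the factor $C/\sqrt{\ell\tau}$ arises from the size of the selected projections, not from accumulating $\ell$ scalar K--R bounds. If you want to give a self-contained proof rather than cite, that coordinate-selection step is where the work lies.
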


We recall a well-known operator norm bound for inhomogeneous random matrices with sub-Gaussian entries. This can be found in various places such as  
\cite{rudelson2010non} or \cite{benhamou2018operator}. Whereas these proofs are stated for matrix with i.i.d. entries, they are based on the method of moments and adding a label $\psi_{ij}\in\{0,1\}$ only decreases the moments, so the result still applies.
\begin{lemma}\label{lemma3.444}
    Fix a label $\Psi=(\psi_{ij})\in\{0,1\}^{N\times n}$, and let $W=(\psi_{ij}w_{ij})$ be an $N\times n$ random matrix ($N\geq n$) where $w_{ij}$ are i.i.d. random variables with mean zero, variance one and are $K$-sub-Gaussian. Then we can find $C_{\eqref{lemma3.444}}>0$ depending only on $K$ (and independent of $\Psi$) such that 
    $$
\mathbb{P}\{\|W\|\geq C_{\eqref{lemma3.444}}\sqrt{N}\}\leq\exp(-N).
    $$
\end{lemma}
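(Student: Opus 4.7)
The plan is to run a standard $\varepsilon$-net argument on the two unit spheres combined with a Hoeffding-type tail bound for sums of independent sub-Gaussians. As the authors already flag just before the statement, the crucial observation is that the labels $\psi_{ij}\in\{0,1\}$ are harmless: since $|\psi_{ij}|\leq 1$, multiplication by $\psi_{ij}$ can only \emph{decrease} the sub-Gaussian norm of $w_{ij}$, so the classical bound for matrices with i.i.d.\ $K$-sub-Gaussian entries carries over with a constant depending only on $K$ and in particular independent of $\Psi$.

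First I would fix $u\in\mathbb{S}^{n-1}$ and $v\in\mathbb{S}^{N-1}$ and study the bilinear form
$$\langle v,Wu\rangle=\sum_{i,j}v_i\,\psi_{ij}\,w_{ij}\,u_j.$$
Because $\Psi$ is deterministic and the $w_{ij}$ are independent and mean zero, each summand $v_i\psi_{ij}w_{ij}u_j$ is mean zero and $K|v_iu_j|$-sub-Gaussian (using $|\psi_{ij}|\leq 1$). By the Pythagorean rule for sums of independent sub-Gaussians, $\langle v,Wu\rangle$ is $K\bigl(\sum_{i,j}\psi_{ij}^2v_i^2u_j^2\bigr)^{1/2}$-sub-Gaussian and hence at most $K$-sub-Gaussian, so
$$\mathbb{P}\bigl(|\langle v,Wu\rangle|\geq t\bigr)\leq 2\exp\!\bigl(-c_0 t^2/K^2\bigr)$$
for a universal constant $c_0>0$.

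Second I would pick $(1/4)$-nets $\mathcal{N}_n\subset\mathbb{S}^{n-1}$ and $\mathcal{N}_N\subset\mathbb{S}^{N-1}$ of cardinalities at most $9^n$ and $9^N$, and use the standard approximation estimate $\|W\|\leq 2\max_{u\in\mathcal{N}_n,\,v\in\mathcal{N}_N}|\langle v,Wu\rangle|$. Taking $t=C\sqrt{N}$ with $C=C(K)$ large enough that $c_0C^2/K^2>2\log 9+1$, a union bound over $\mathcal{N}_n\times\mathcal{N}_N$ (whose total size is at most $9^{2N}$ since $N\geq n$) yields
$$\mathbb{P}\bigl(\|W\|\geq 2C\sqrt{N}\bigr)\leq 2\cdot 9^{2N}\exp\!\bigl(-c_0C^2N/K^2\bigr)\leq \exp(-N),$$
which is exactly the claimed estimate after renaming the constant.

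There is really no serious obstacle here: this is a textbook $\varepsilon$-net argument, and the only point to verify is that inserting the $\psi_{ij}$'s does not spoil the sub-Gaussian constant, which is immediate from $|\psi_{ij}|\leq 1$. An equally routine alternative route is via the moment inequality of Lemma \ref{kemma2.2spectralradius} applied with $q=2\log N$: this reduces the claim to controlling the maxima of row and column Euclidean norms, and a Bernstein-type bound handles these at scale $\sqrt{N}$, again because inserting the labels only shrinks the relevant moments.
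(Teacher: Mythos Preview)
Your argument is correct: the $\varepsilon$-net plus sub-Gaussian tail bound is the standard proof, and your key observation that $|\psi_{ij}|\leq 1$ only shrinks the sub-Gaussian parameter is exactly what makes the constant independent of $\Psi$. The paper does not actually give a proof of this lemma but only cites \cite{rudelson2010non} and \cite{benhamou2018operator}, noting that the label can only decrease the relevant moments; your write-up is precisely a filling-in of the argument in the first of those references, and your alternative via Lemma~\ref{kemma2.2spectralradius} is in the spirit of the moment remark the paper makes.
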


We finally quote a purely technical result from \cite{tikhomirov2016smallest} which gives improved cardinality counting of the net.
This lemma is possibly trivial when $\xi$ is a standard Gaussian, but let us state it that way.
\begin{lemma}\label{lemma3.56}(\cite{tikhomirov2015limit}, Lemma 14)
    Consider $\xi$ a random variable satisfying, for $z\in\mathbb{R},\gamma>0,N\in\mathbb{N}$:
    $$ \min(\mathbb{P}(z-\sqrt{N}\geq\xi\leq z-1),\mathbb{P}(z+1\leq\xi\leq z+\sqrt{N}))\geq\gamma.
    $$ Then we can find some integer $\ell\in[0,\lfloor\log_2\sqrt{N}\rfloor]$, some $\lambda\in\mathbb{R}$, some Borel subsets $H_1,H_2\subset[-2^{\ell+2},2^{\ell+2}]$ satisfying $$\operatorname{dist}(H_1,H_2)\geq 2^\ell, \quad \min(\mathbb{P}\{\xi-\lambda\in H_1\},\mathbb{P}\{\xi-\lambda\in H_2\})\geq c_{\eqref{lemma3.56}}\gamma 2^{-\ell/8}$$ where $c_{\eqref{lemma3.56}}>0$ is a universal constant, and such that for $H=H_1\cup H_2$ we have $$\mathbb{E}[(\xi-\lambda)1_{\xi-\lambda\in H}]=0.$$
\end{lemma}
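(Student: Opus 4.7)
The plan is a weighted dyadic pigeonhole followed by a centering step. Translate $z = 0$ and partition each of $[-\sqrt N, -1]$ and $[1, \sqrt N]$ into dyadic annuli $A_k^+ := [2^k, 2^{k+1}]$ and $A_k^- := [-2^{k+1}, -2^k]$ for $k = 0, 1, \dots, K := \lfloor \log_2 \sqrt N\rfloor$; write $p_k^\pm := \mathbb{P}(\xi \in A_k^\pm)$, so that the hypothesis gives $\sum_{k=0}^K p_k^\pm \geq \gamma$.

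Step 1 is a weighted pigeonhole. Since $\sum_{k\geq 0} 2^{-k/8}$ converges, the assumption $p_k^\pm < c\gamma\, 2^{-k/8}$ for every $k$ would contradict the mass lower bound once $c$ is smaller than $1 - 2^{-1/8}$. Hence for an absolute $c_0 > 0$ there exist scales $\ell_-, \ell_+ \in [0, K]$ with $p_{\ell_-}^- \geq c_0 \gamma\, 2^{-\ell_-/8}$ and $p_{\ell_+}^+ \geq c_0 \gamma\, 2^{-\ell_+/8}$. Set $\ell := \max(\ell_-, \ell_+)$ and take $H_1^0 := A_{\ell_-}^-$, $H_2^0 := A_{\ell_+}^+$. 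Both lie in $[-2^{\ell+2}, 2^{\ell+2}]$, are separated by $2^{\ell_-} + 2^{\ell_+} \geq 2^\ell$, and each carries mass at least $c_0 \gamma\, 2^{-\ell/8}$ under $\xi$ itself (the case $\lambda = 0$). What remains is to impose the centering condition $\mathbb{E}[(\xi - \lambda) \mathbf{1}_{\xi - \lambda \in H_1 \cup H_2}] = 0$ for some $\lambda \in \mathbb{R}$ and some $H_i$ built from $H_i^0$ (or its $\lambda$-shifted counterpart).

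Step 2 is the centering, which is the main obstacle. With $\lambda = 0$ set $E_j := \int_{H_j^0} x\, d\mu$; if $E_1 + E_2 = 0$ we are done. In the balanced case $\ell_- = \ell_+ = \ell$, a direct greedy shrinking of whichever side dominates in absolute moment works: every removed point carries $\Theta(2^\ell)$ of moment per unit of mass, so the retained mass stays $\geq (c_0/2)\gamma\, 2^{-\ell/8}$. The delicate sub-case is the lopsided regime $\ell_+ \ne \ell_-$, where the moment imbalance can reach order $2^{\ell - \min(\ell_\pm)}$ and greedy shrinking of the dominant side alone would deplete its mass below the required threshold. My proposed remedy is to let $\lambda$ slide across a window of radius $\sim 2^{\ell-1}$ around $0$: the signed moment $F(\lambda) := \int_{(\lambda + H_1^0) \cup (\lambda + H_2^0)} (x - \lambda)\, d\mu$ is piecewise continuous in $\lambda$ (with atom jumps of controlled size that a routine smoothing argument handles), the shifted sets remain inside $[-2^{\ell+2}, 2^{\ell+2}]$ with separation $\geq 2^\ell$, and as $\lambda$ sweeps across the window the unconstrained ``bulk'' mass of $\xi$ (the $1 - 2\gamma$ portion not pinned down by the hypothesis) can also be absorbed into $H_1 \cup H_2$, so an intermediate value argument produces a $\lambda$ with $F(\lambda) = 0$. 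The bookkeeping needed to guarantee that \emph{both} probability bounds survive the sliding step, verified by tracking the mass that enters and leaves each dyadic shell as $\lambda$ moves, is the technical heart of the lemma and yields the universal constant $c_{\ref{lemma3.56}}$ as an explicit product of $c_0$, $1 - 2^{-1/8}$, and absolute factors from the sliding estimate.
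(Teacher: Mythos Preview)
The paper does not prove this lemma; it is quoted verbatim from Tikhomirov (\cite{tikhomirov2015limit}, Lemma~14) as a black box. So there is no ``paper's own proof'' to compare against, and your sketch must stand on its own.

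Your Step~1 (weighted dyadic pigeonhole on each side to produce scales $\ell_\pm$ with $p_{\ell_\pm}^\pm \ge c_0\gamma\,2^{-\ell_\pm/8}$, then $\ell:=\max(\ell_+,\ell_-)$) is correct and is exactly the right opening move.

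Step~2, however, has a genuine gap, and the fix is much simpler than the sliding argument you propose. Your plan is to keep $H_1^0,H_2^0$ fixed as the two dyadic annuli and slide $\lambda$ over a window of radius $\sim 2^{\ell-1}$, appealing to an intermediate-value argument for $F(\lambda)$. But consider $\xi$ supported on $\{-1,\,2^{10}\}$ (each with mass $\gamma$) and the rest of the mass far away. Then $\ell_-=0$, $\ell_+=\ell=10$, $H_1^0=[-2,-1]$, $H_2^0=[2^{10},2^{11}]$. As $\lambda$ ranges over $[-2^9,2^9]$, the only value at which \emph{both} shifted annuli $\lambda+H_i^0$ retain their designated atoms is $\lambda=0$, and there $F(0)\ne 0$; for every other $\lambda$ in the window one of the two mass lower bounds collapses to zero. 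So the sliding/IVT route, as written, cannot close.

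The clean repair is to abandon sliding entirely: set $\lambda$ to be the \emph{conditional mean}
\[
\lambda \;:=\; \frac{\int_{H_1^0\cup H_2^0} x\,d\mu(x)}{\mathbb{P}\bigl(\xi\in H_1^0\cup H_2^0\bigr)},
\]
and then define $H_i := H_i^0 - \lambda$. The centering condition $\mathbb{E}[(\xi-\lambda)\mathbf 1_{\xi-\lambda\in H}]=0$ is then automatic. The mass bounds are preserved verbatim since $\mathbb{P}(\xi-\lambda\in H_i)=\mathbb{P}(\xi\in H_i^0)$. The separation $\operatorname{dist}(H_1,H_2)=\operatorname{dist}(H_1^0,H_2^0)\ge 2^\ell$ is unchanged by translation. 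Finally, since $H_1^0\cup H_2^0\subset[-2^{\ell+1},2^{\ell+1}]$ and $\lambda$ lies in the convex hull of this set, one has $H_i\subset[-2^{\ell+2},2^{\ell+2}]$, which is precisely why the lemma grants the factor $2^{\ell+2}$ rather than $2^{\ell+1}$. This one-line choice of $\lambda$ replaces your entire Step~2.
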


\subsection{Sphere decomposition and cardinality of nets}

We now outline a decomposition of the sphere $\mathbb{S}^{n-1}$ into three subsets, following \cite{tikhomirov2016smallest}. \begin{Definition} For fixed $\theta>0,m>0$ we define subsets $\mathbb{S}_p^{n-1}(\theta)$ and $\mathbb{S}_a^{n-1}(m)$ of $\mathbb{S}^{n-1}$ via
    \begin{enumerate}
\item The set $\mathbb{S}_p^{n-1}(\theta)$ of $\theta$-peaky vectors consist of unit vectors in $\mathbb{R}^n$ that have $\ell_\infty^n$ norm at least $\theta$. \item  A vector $y\in \mathbb{S}^{n-1}$ is called $m$-sparse if $|\operatorname{supp}y|\leq m$. We then call $y\in\mathbb{S}^{n-1}$ almost $m$-sparse if we can find $J\subset\{1,\cdots,n\}$ with cardinality bounded by $m$, and $\|y\chi_J\|\geq\frac{1}{2}$. We use the notation $\mathbb{S}_a^{n-1}(m)$ to denote the set of almost $m$-sparse vectors of $\mathbb{S}^{n-1}$.
\end{enumerate}\end{Definition}
The cardinality of nets for $\mathbb{S}_a^{n-1}$ can be bounded by the following lemma.

\begin{lemma}\label{lemma3.7}[\cite{tikhomirov2015limit}, Lemma 12] We can find a universal constant $C_{\eqref{lemma3.7}}>0$ so the following holds. Given $n,m\in\mathbb{N},n\geq m$, $\epsilon\in(0,1]$, a subset $S\subset\mathbb{S}^{n-1}$, and let $T\subset\mathbb{B}_2^n$ (the unit ball in $\ell_2^n$) be a subset of $m$-sparse vectors that satisfy 
\begin{equation}\label{line444}
\text{for each }y\in S\text{ we can find }x=x(y)\in T\text{ such that }y\chi_{\text{supp}x}=x.
\end{equation} Then we can find a set $\mathcal{N}\subset T$ with cardinality bounded by $(\frac{C_{\eqref{lemma3.7}}n}{\epsilon m})^m$ so that given any $y\in S$ we can find $y'=y'(y)\in\mathcal{N}$ satisfying $\|y'-\chi_{\operatorname{supp}y'}y\|\leq\epsilon.$
    
\end{lemma}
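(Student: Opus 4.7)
The plan is to build the net inside $T$ by partitioning its elements according to their exact support. For each $J\subseteq\{1,\dots,n\}$ with $|J|\leq m$, I define $T_J:=\{x\in T:\operatorname{supp}x=J\}$ and choose a maximal $\epsilon$-separated subset $\mathcal{N}_J\subseteq T_J$. Since $T_J$ lies in the unit Euclidean ball of $\mathbb{R}^J$, a standard volumetric packing bound produces such an $\mathcal{N}_J$ of cardinality at most $(3/\epsilon)^{|J|}$, and maximality forces $\mathcal{N}_J$ to be an $\epsilon$-net of $T_J$ inside $\mathbb{R}^n$. I then take $\mathcal{N}:=\bigcup_{|J|\leq m}\mathcal{N}_J\subseteq T$, which has the form demanded by the conclusion.

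For the cardinality estimate, I sum over support size using $\binom{n}{k}\leq (en/k)^k$, obtaining
$$|\mathcal{N}|\;\leq\;\sum_{k=0}^m\binom{n}{k}(3/\epsilon)^k\;\leq\;\sum_{k=0}^m\bigl(3en/(k\epsilon)\bigr)^k.$$
Under the assumptions $n\geq m$ and $\epsilon\leq 1$, the $k=m$ term dominates, and the polynomial-in-$m$ loss from summing is absorbed into the base of the exponent, yielding a bound of the claimed form $(C_{\eqref{lemma3.7}}n/(\epsilon m))^m$ for some absolute constant.

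For the approximation property, fix $y\in S$, let $x=x(y)\in T$, and set $J_0:=\operatorname{supp}x$. Then $x\in T_{J_0}$, so some $y'\in\mathcal{N}_{J_0}$ satisfies $\|y'-x\|\leq\epsilon$, and by construction $\operatorname{supp}y'=J_0$. Using the hypothesis $y\chi_{\operatorname{supp}x}=x$ from \eqref{line444}, I then have $y\chi_{\operatorname{supp}y'}=y\chi_{J_0}=x$, and therefore $\|y'-\chi_{\operatorname{supp}y'}y\|=\|y'-x\|\leq\epsilon$. The only nontrivial point --- and what I view as the main obstacle --- is arranging that each net element $y'$ has support \emph{exactly} $J_0$, rather than merely contained in some enclosing coordinate block of size $m$; partitioning $T$ by exact support (instead of by an ambient coordinate block) is precisely what makes the restriction $y\chi_{\operatorname{supp}y'}$ collapse back to $x$ in the final step, so the remaining work is bookkeeping.
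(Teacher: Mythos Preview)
Your argument is correct. The paper does not supply its own proof of this lemma---it is quoted verbatim from \cite{tikhomirov2015limit}, Lemma~12---so there is nothing to compare against beyond noting that your proof is the standard one: stratify $T$ by support, build a volumetric $\epsilon$-net in each stratum, and sum. Your observation that one must partition by \emph{exact} support (so that $\operatorname{supp}y'=\operatorname{supp}x$ and hence $y\chi_{\operatorname{supp}y'}=x$) is precisely the point; had you instead covered $T$ by the coarser sets $\{x\in T:\operatorname{supp}x\subseteq J\}$ over $|J|=m$, the net element $y'$ could pick up coordinates of $y$ outside $\operatorname{supp}x$ and the final inequality would fail. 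The cardinality bookkeeping is also fine: since $k\mapsto(3en/(k\epsilon))^k$ is increasing for $k\leq 3n/\epsilon$ and $m\leq n\leq 3n/\epsilon$, the $k=m$ term dominates and the factor $(m+1)$ is absorbed into the constant.
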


Finally we consider the rest of the sphere $\mathbb{S}^{n-1}\setminus (\mathbb{S}_p^{n-1}(\theta)\cup\mathbb{S}_a^{n-1}(m))$, and find a net.

\begin{lemma}\label{lemma3.82}(\cite{tikhomirov2015limit}, Lemma 16)
    Fix some $N\geq n\geq m\geq 1$. For any $y\in\mathbb{S}^{n-1}\setminus \mathbb{S}_a^{n-1}(\sqrt{N})$ we can find s subset $J=J(y)\subset\{1,2,\cdots,n\}$ with $|J|\leq m$, $\|y\chi_J\|\geq\frac{1}{2}\sqrt{\frac{m}{n}}$, and that $\|y\chi_J\|_\infty\leq\frac{1}{\lfloor N^{1/4}\rfloor}.$
\end{lemma}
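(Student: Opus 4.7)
The plan is to construct $J$ as (a subset of) the indices where $y$ has \emph{small} coordinates, using the hypothesis $y\notin\mathbb{S}_a^{n-1}(\sqrt N)$ to guarantee that almost all of the $\ell^2$-mass of $y$ is concentrated on those indices. First I would set
\[
S := \bigl\{ i\in\{1,\ldots,n\}\ :\ |y_i|\leq 1/\lfloor N^{1/4}\rfloor\bigr\},
\]
so trivially $\|y\chi_S\|_\infty\leq 1/\lfloor N^{1/4}\rfloor$. The cardinality of the complement is controlled by a two-line Chebyshev-type estimate: each $i\in S^c$ contributes $|y_i|^2>1/\lfloor N^{1/4}\rfloor^2$, so summing and using $\|y\|=1$ yields $|S^c|<\lfloor N^{1/4}\rfloor^2$. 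Writing $k=\lfloor N^{1/4}\rfloor$, the integer $k^2$ satisfies $k^2\leq\sqrt N$, hence $k^2\leq\lfloor\sqrt N\rfloor$, and therefore $|S^c|\leq\lfloor\sqrt N\rfloor$. Now applying the hypothesis $y\notin\mathbb{S}_a^{n-1}(\sqrt N)$ to this specific $S^c$ gives $\|y\chi_{S^c}\|<1/2$, whence $\|y\chi_S\|^2>3/4$.

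With $S$ in hand I would take $J$ to consist of the $\min(m,|S|)$ largest-magnitude coordinates of $y$ inside $S$. Then automatically $|J|\leq m$ and $\|y\chi_J\|_\infty\leq 1/\lfloor N^{1/4}\rfloor$. For the $\ell^2$ lower bound I split on $|S|$. If $|S|\geq m$, the top $m$ squared entries of $y\chi_S$ each dominate the average of $|y_i|^2$ over $i\in S$, so
\[
\|y\chi_J\|^2\geq \frac{m}{|S|}\,\|y\chi_S\|^2>\frac{3m}{4n},
\]
giving $\|y\chi_J\|>\tfrac{\sqrt 3}{2}\sqrt{m/n}\geq \tfrac12\sqrt{m/n}$. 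If $|S|<m$, I simply set $J=S$; then $|J|\leq m$ and $\|y\chi_J\|=\|y\chi_S\|>\sqrt 3/2\geq \tfrac{1}{2}\sqrt{m/n}$ because $m\leq n$.

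The statement is essentially elementary: the only "obstacle" worth flagging is the bookkeeping of floor functions (the inequality $\lfloor N^{1/4}\rfloor^2\leq\lfloor\sqrt N\rfloor$), and correctly exploiting the fact that the conclusion only demands $|J|\leq m$ rather than $|J|=m$, which lets the case $|S|<m$ be absorbed. The degenerate regime $\lfloor\sqrt N\rfloor\geq n$ requires no separate argument, since then every unit vector already lies in $\mathbb{S}_a^{n-1}(\sqrt N)$ (take $J=\{1,\ldots,n\}$) and the lemma is vacuous.
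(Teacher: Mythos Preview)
Your argument is correct. The paper does not supply its own proof of this lemma; it is quoted verbatim from \cite{tikhomirov2015limit} (Lemma~16) and used as a black box, so there is nothing to compare against here. Your construction---isolate the small-coordinate index set $S$ via the threshold $1/\lfloor N^{1/4}\rfloor$, bound $|S^c|$ by a Chebyshev count to invoke the non--almost-sparseness hypothesis, then pick the $m$ largest entries inside $S$---is exactly the standard route and matches what one finds in Tikhomirov's paper. The floor-function bookkeeping $\lfloor N^{1/4}\rfloor^2\le\lfloor\sqrt N\rfloor$ and the averaging inequality for the top-$m$ coordinates are both handled cleanly.
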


\subsection{Quasi-random selection of labels}

We first prove that quasi-random selection of labels holds for a single vector.

\begin{lemma}\label{lemma3.9}
    Fix some $d,r>0$. Let $(g_{i1},g_{i2},\cdots g_{in})$ be a random vector of i.i.d. coordinates. Consider $H\subset\mathbb{R}$ a Borel subset with $H=H_1\cup H_2$ for two Borel sets $H_1,H_2$, where $\operatorname{dist}(H_1,H_2)\geq d$, and $\min(\mathbb{P}(g_{ij}\in H_1),\mathbb{P}(g_{ij}\in H_2))\geq r$. Given any $t>0$ we define 
    $$
h_{\eqref{lemma3.9}}=\frac{1-\delta^{-1/4}}{C_{\eqref{theorem3.2}}}\sqrt{
\frac{r}{16}
}td,
    $$ and let $y\in\mathbb{R}^n$ be a vector which satisfy $\|y\|_2\geq t$, $\|y\|_\infty\leq \frac{2h_{\eqref{lemma3.9}}}{d}$. Let $(g_{i1}',g_{i2}',\cdots,g_{in}')$ be an i.i.d. copy of $(g_{i1},\cdots,g_{in})$. Then 
    \begin{equation}\label{andletsumover}
\mathbb{P}\left\{\left|\sum_{j=1}^n \psi_{ij}(g_{ij}-g_{ij}')y_j\right|\leq h_{\eqref{lemma3.9}}\right\}
  \leq 1-\delta^{-1/4}.\end{equation}
\end{lemma}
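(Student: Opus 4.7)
The plan is to bound the Lévy concentration function of the sum in \eqref{andletsumover} at scale $h_{\ref{lemma3.9}}$ by applying the Kolmogorov--Rogozin inequality (Theorem \ref{theorem3.2}) to the independent summands $\xi_j:=\psi_{ij}(g_{ij}-g'_{ij})y_j$. Since the probability in \eqref{andletsumover} is at most $\mathcal Q(\sum_j \xi_j,\,h_{\ref{lemma3.9}})$ (taking $\lambda=0$ in \eqref{levyconcentrationf}), it is enough to show that this concentration function does not exceed $1-\delta^{-1/4}$.

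First I would establish a one-coordinate anti-concentration estimate. The separation hypothesis $\mathrm{dist}(H_1,H_2)\geq d$ forbids any interval of length $d$ from meeting both $H_k$'s, so every ball of radius $d/2$ misses at least one of them; combined with $\min_k \mathbb P(g_{ij}\in H_k)\geq r$, this forces $\mathcal Q(g_{ij},d/2)\leq 1-r$. The standard convolution monotonicity $\mathcal Q(g_{ij}-g'_{ij},h)\leq \mathcal Q(g_{ij},h)$ transfers this to the symmetrized variable, and scaling by $y_j$ yields $1-\mathcal Q(\xi_j,|y_j|d/2)\geq r$ whenever $\psi_{ij}=1$. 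Next I would set $h_j:=|y_j|d/2$ and note that the hypothesis $\|y\|_\infty\leq 2h_{\ref{lemma3.9}}/d$ is exactly $\max_j h_j\leq h_{\ref{lemma3.9}}$, which is precisely what Theorem \ref{theorem3.2} requires. Kolmogorov--Rogozin then gives
$$
\mathcal Q\!\Big(\sum_j\xi_j,\,h_{\ref{lemma3.9}}\Big)\;\leq\;C_{\ref{theorem3.2}}\,h_{\ref{lemma3.9}}\Big(\tfrac{r\,d^2}{4}\sum_{j:\,\psi_{ij}=1}|y_j|^2\Big)^{-1/2}.
$$
Using the "quasi-random" lower bound $\sum_{j:\psi_{ij}=1}|y_j|^2\geq t^2/4$ (which reflects the fact that the label is engineered to retain a definite fraction of the $\ell^2$-energy of $y$), the right-hand side simplifies to $C_{\ref{theorem3.2}}h_{\ref{lemma3.9}}/(td\sqrt{r/16})$, which by the explicit definition of $h_{\ref{lemma3.9}}$ equals $1-\delta^{-1/4}$, as desired.

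The only real subtlety, and the main obstacle in a careful write-up, is that coordinates with $\psi_{ij}=0$ make $\xi_j$ vanish and contribute nothing to the denominator in Kolmogorov--Rogozin; hence the estimate is only useful to the extent that the label keeps a positive fraction of the mass of $y$ at indices where $\psi_{ij}=1$. This is precisely the quasi-randomness condition invoked above, and in the outer argument leading to Theorem \ref{universalbacks} this property will be built into the definition of the admissible label set $\mathcal D$ through a careful net construction. Apart from this bookkeeping, the proof is a clean two-step application of the geometric anti-concentration bound (from $H_1,H_2$ being $d$-apart) and the Kolmogorov--Rogozin inequality.
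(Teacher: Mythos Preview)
Your argument has a genuine gap: you treat the label $\psi_{ij}$ as fixed and then invoke an extra ``quasi-random'' hypothesis $\sum_{j:\psi_{ij}=1}|y_j|^2\geq t^2/4$ that is \emph{not} part of the lemma's assumptions. In the statement, the probability in \eqref{andletsumover} is taken jointly over $g_{ij}$, $g'_{ij}$ \emph{and} $\psi_{ij}$; nothing guarantees that a fixed realization of $\psi$ retains any prescribed fraction of the $\ell^2$-mass of $y$. For a particular $\psi$ it could happen that $\sum_{j:\psi_{ij}=1}|y_j|^2$ is arbitrarily small, and then your Kolmogorov--Rogozin bound is vacuous. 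So the conditional route does not yield the unconditional inequality the lemma asserts.

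The paper's proof avoids this by keeping $\psi_{ij}$ random inside each summand. One observes directly that
\[
\mathbb{P}\bigl(\psi_{ij}|g_{ij}-g'_{ij}|\geq d\bigr)\;\geq\;\mathbb{P}(\psi_{ij}=1)\bigl(\mathbb{P}(g_{ij}\in H_1,\,g'_{ij}\in H_2)+\mathbb{P}(g_{ij}\in H_2,\,g'_{ij}\in H_1)\bigr),
\]
which is bounded below by a positive multiple of $r$ using $\mathbb{P}(\psi_{ij}=1)>\tfrac12$. By the symmetry of $g_{ij}-g'_{ij}$ this gives $\mathcal{Q}\bigl(\psi_{ij}(g_{ij}-g'_{ij}),\,d/2\bigr)\leq 1-\tfrac{r}{4}$ for \emph{every} $j$, so all $n$ coordinates contribute in Theorem~\ref{theorem3.2} and the full $\|y\|_2\geq t$ appears in the denominator; the definition of $h_{\ref{lemma3.9}}$ then makes the right-hand side exactly $1-\delta^{-1/4}$. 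You also have the logical order of the section reversed: Lemma~\ref{lemma3.9} is the unconditional input, and it is only afterwards, in Lemma~\ref{lemma3.10}, that one passes to fixed labels via a Bernstein-type argument. The ``quasi-randomness'' of the label is a \emph{consequence} of the present lemma, not a hypothesis you may feed into it.
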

We note that the randomness is taken jointly over $g_{ij},g_{ij}'$ and $\psi_{ij}$.
\begin{proof}
    From joint independence and the definition of $\psi_{ij}$, we have 
    $$ 
\mathbb{P}\{\psi_{ij}|g_{ij}-g_{ij}'|\geq d\}\geq \mathbb{P}\{g_{ij}\in H_1,g_{ij}'\in H_2,\psi_{ij}=1\}+\mathbb{P}\{g_{ij}\in H_2,g_{ij}'\in H_1,\psi_{ij}=1\}
\geq \frac{r}{2},
    $$ 
    where we use $\mathbb{P}(\psi_{ij}=1)>\frac{1}{2}$. Since $g_{ij}-g_{ij}'$ has symmetric distribution, we deduce that 
    $\mathcal{Q}(g_{ij}-g_{ij}',\frac{d}{2})\leq 1-\frac{r}{4}.$ By our assumption, $h_{\eqref{lemma3.9}}\geq \frac{d|y_j|}{2}$ for each $j$. Thus, applying Theorem \ref{theorem3.2}, we, have that 
$$\begin{aligned}
\text{LHS of } \eqref{andletsumover}&\leq C_{\eqref{theorem3.2}}h_{\eqref{lemma3.9}}(\frac{1}{4}\sum_{j=1}^n(1-\mathcal{Q}((g_{ij}-g_{ij}')y_j,\frac{|y_j|d}{2})(y_jd)^2)^{-1/2}\\&\leq C_{\eqref{theorem3.2}} h_{\eqref{lemma3.9}}(\frac{r}{16}\sum_{j=1}^d(y_jd)^2)^{-1/2}=1-\delta^{-1/4}.
\end{aligned}$$
    This concludes the proof.
\end{proof}

Now we can conclude the quasi-random label selection for a fixed vector $y$ with sufficient mass and which is not peaky.

\begin{lemma}\label{lemma3.10} Fix $\delta>1$ and $N\geq\delta n$.
Fix $t>0,d>0.$
    For any $y\in\mathbb{R}^n$ with $\|y\|\geq t,$ $\|y\|_\infty\leq \frac{2h_{\eqref{lemma3.9}}}{d}$ we can find a subset of labels $\Omega_y\in\{0,1\}^{N\times n}$ such that for a randomly chosen label $\Psi,$
    $$
\mathbb{P}(\Psi\in\Omega_y)\geq 1-\exp(-w_{\eqref{lemma3.10}}N)
    $$ and that
    for any $W_y\in\Omega_y$,
    $$
\mathbb{P}_{W_y}\{\operatorname{dist}(G_1y,V_{G_0,G_1}(E))\leq h_{\eqref{lemma3.10}}h_{\eqref{lemma3.9}}\sqrt{N})\}\leq 2\exp(-w_{\eqref{lemma3.10}}N)
    $$ for constants $h_{\eqref{lemma3.10}},w_{\eqref{lemma3.10}}$ depending only on $\delta$. Here $E:=\operatorname{span}\{e_j\}_{j\in\operatorname{supp}y},$ and for any subspace $E\subset\mathbb{R}^n$, we denote by $V_{G_0,G_1}(E)$ the following subspace
    \begin{equation}\label{vgog1e}V_{G_0,G_1}(E):=(G_0+G_1)(E^\perp)+G_0(E).\end{equation}
\end{lemma}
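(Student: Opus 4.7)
The plan is to combine Lemma~\ref{lemma3.9} (single-row anti-concentration) with Corollary~\ref{corollary3.3} (projection-type Kolmogorov--Rogozin), via a crucial decoupling structure: $G_1 y = \sum_{j \in \operatorname{supp} y} y_j (G_1)_{\cdot j}$ depends only on the columns of $G_1$ indexed by $\operatorname{supp} y$, whereas $V_{G_0,G_1}(E) = (G_0+G_1)(E^\perp) + G_0(E)$ depends only on $G_0$ and on the columns of $G_1$ indexed by $\operatorname{supp}(y)^c$. Hence, once we condition on $\Psi$ and on the Gaussians $(g_{ij})_{j \notin \operatorname{supp} y}$, the subspace $V := V_{G_0,G_1}(E) \subset \mathbb{R}^N$ is deterministic of dimension at most $n$, while $G_1 y$ remains a random vector in $\mathbb{R}^N$ with independent coordinates.

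First, I would construct $\Omega_y$ via row-wise mass control. Call row $i$ \emph{good} if $\sum_{j : \psi_{ij}=1} y_j^2 \geq c_1 \|y\|^2$ for a constant $c_1 = c_1(\delta) \in (0,1)$. Applying Markov's inequality to $\sum_j (1-\psi_{ij}) y_j^2$ and using $\mathbb{P}(\psi_{ij}=1) \geq (3\delta+1)/(4\delta)$, one gets
\[ \mathbb{P}(\text{row } i \text{ is bad}) \leq \frac{\delta-1}{4\delta(1-c_1)}, \]
which for $c_1 = 1/2$ (say) is strictly less than $(\delta-1)/\delta$, giving $\mathbb{P}(\text{row good}) \geq (\delta+1)/(2\delta) > 1/\delta$. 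Define $\Omega_y$ to be the event that at least $c_3 N$ rows are good for some $c_3 \in (1/\delta, (\delta+1)/(2\delta))$. Since rows of $\Psi$ are mutually independent, Chernoff yields $\mathbb{P}(\Psi \in \Omega_y) \geq 1 - \exp(-w_{\ref{lemma3.10}} N)$, which is part (1).

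For part (2), fix $\Psi \in \Omega_y$ and let $S = S(\Psi) \subset [N]$ denote the good rows, so $|S| \geq c_3 N$. For each $i \in S$, apply Theorem~\ref{theorem3.2} with $h_j = \alpha_{\ref{line276definition}} |y_j|$ (permitted because $\|y\|_\infty \leq 2 h_{\ref{lemma3.9}}/d$ matches the required $h \geq \max h_j$): the conditional coordinate $(G_1 y)_i = \sum_j \psi_{ij} g_{ij} y_j$ satisfies $\mathcal{Q}_\Psi((G_1 y)_i, h) \leq 1 - \tau$ at a scale $h$ comparable to $h_{\ref{lemma3.9}}$ (up to $\delta$-dependent constants), with $\tau = \tau(\delta, \alpha_{\ref{line276definition}}, \beta_{\ref{line276definition}}) > 0$. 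Next, condition additionally on $(g_{ij})_{j \notin \operatorname{supp} y}$, which fixes the subspace $V$. Writing $\pi_S : \mathbb{R}^N \to \mathbb{R}^{|S|}$ for coordinate restriction, one has
\[ \operatorname{dist}(G_1 y, V) \geq \operatorname{dist}\bigl(\pi_S(G_1 y), \pi_S(V)\bigr), \]
since $\|G_1 y - v\|^2 \geq \|\pi_S(G_1 y - v)\|^2$ for every $v \in V$. As $\dim \pi_S(V) \leq \dim V \leq n$, the subspace $(\pi_S V)^\perp \subset \mathbb{R}^{|S|}$ has dimension at least $|S| - n \geq c_3 N - n \geq c_5 N$ with $c_5 := c_3 - 1/\delta > 0$. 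Applying Corollary~\ref{corollary3.3} to $\pi_S(G_1 y)$ and the subspace $(\pi_S V)^\perp$, with $\ell$ chosen large enough that $C_{\ref{corollary3.3}}/\sqrt{\ell \tau} \leq 1/2$, gives $\mathbb{P}_\Psi(\operatorname{dist}(G_1 y, V) \leq h_{\ref{lemma3.10}} h_{\ref{lemma3.9}} \sqrt{N}) \leq 2^{-c_5 N/\ell} \leq \exp(-w_{\ref{lemma3.10}} N)$ after possibly shrinking $w_{\ref{lemma3.10}}$. Integrating over the remaining Gaussians $(g_{ij})_{j \notin \operatorname{supp} y}$ (which we conditioned on) completes part (2).

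The principal technical subtlety is the threshold choice in the definition of $\Omega_y$: because $\|y\|_\infty/\|y\|$ is only a \emph{constant} ratio under the hypotheses (not a small one), Hoeffding yields only a constant per-row failure probability and cannot directly give exponential decay. The exponential bound must come from Chernoff across the $N$ independent rows, which in turn requires the strict lower bound $\mathbb{P}(\text{row good}) > 1/\delta$; this strict inequality depends crucially on the oversampling $\mathbb{P}(\psi_{ij}=1) \geq (3\delta+1)/(4\delta)$ and on $\delta > 1$, and is precisely what also powers the dimension count $|S| - n \geq c_5 N > 0$ in the projection step. The $\sqrt{N}$ factor in the final scale emerges as the $\sqrt{d'}$ with $d' \asymp N$ in Corollary~\ref{corollary3.3}.
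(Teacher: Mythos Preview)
Your overall structure is sound and in fact takes a more direct route than the paper. Both arguments finish the same way---single out a set $\Omega_y$ of labels for which at least a fixed proportion of rows enjoy conditional anti-concentration of $(G_1y)_i$ at scale $\asymp h_{\ref{lemma3.9}}$, then feed those rows into Corollary~\ref{corollary3.3}---but the constructions of $\Omega_y$ differ. The paper defines good row-configurations \emph{implicitly} through the conditional Lévy function and establishes their abundance by first symmetrizing ($g_{ij}\mapsto g_{ij}-g_{ij}'$), invoking Lemma~\ref{lemma3.9} over the \emph{joint} randomness of $\psi$ and $g,g'$, and then unwinding by the law of total probability. You instead define good rows \emph{explicitly} via the $L^2$-mass condition $\sum_{j:\psi_{ij}=1}y_j^2\ge c_1\|y\|^2$, bound the bad-row probability by Markov, and for each good row derive anti-concentration by a direct appeal to Theorem~\ref{theorem3.2}. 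Your route avoids the symmetrization and the indirect total-probability step, which is a genuine simplification.

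There is, however, a parameter mismatch that makes one of your steps fail as written. The scale $h_{\ref{lemma3.9}}$ is defined in Lemma~\ref{lemma3.9} through the $H_1,H_2$ data $(d,r,t)$, not through the generic anti-concentration constants $\alpha_{\ref{line276definition}},\beta_{\ref{line276definition}}$ of Definition~\ref{line276definition}. Applying Theorem~\ref{theorem3.2} with $h_j=\alpha_{\ref{line276definition}}|y_j|$ gives a scale $\asymp\alpha_{\ref{line276definition}}\sqrt{\beta_{\ref{line276definition}}}\,t$, which is in general \emph{not} comparable to $h_{\ref{lemma3.9}}\asymp d\sqrt{r}\,t$; moreover the requirement $h\ge\alpha_{\ref{line276definition}}\|y\|_\infty$ does not follow from $\|y\|_\infty\le 2h_{\ref{lemma3.9}}/d$ unless $\alpha_{\ref{line276definition}}\le d/2$, and your $\tau$ ends up depending on $\alpha_{\ref{line276definition}},\beta_{\ref{line276definition}}$, whereas the lemma asserts $h_{\ref{lemma3.10}},w_{\ref{lemma3.10}}$ depend only on $\delta$.

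The fix is immediate once you use the right anti-concentration input: the $H_1,H_2$ hypothesis of Lemma~\ref{lemma3.9} yields $\mathcal{Q}(g_{ij},d/2)\le 1-r$, so run your argument with $h_j=(d/2)|y_j|$. Then $\max_j h_j\le (d/2)\|y\|_\infty\le h_{\ref{lemma3.9}}$, and Theorem~\ref{theorem3.2} at $h=h_{\ref{lemma3.9}}$ gives, for a good row with $c_1=\tfrac12$,
\[
\mathcal{Q}_\Psi\bigl((G_1y)_i,\,h_{\ref{lemma3.9}}\bigr)\ \le\ \frac{2C_{\ref{theorem3.2}}\,h_{\ref{lemma3.9}}}{d\sqrt{r c_1}\,t}\ =\ \frac{1-\delta^{-1/4}}{\sqrt{2}}\ <\ 1,
\]
so $\tau$ depends only on $\delta$. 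The remainder of your argument (Chernoff over the independent rows, projection to $S$, Corollary~\ref{corollary3.3} on $(\pi_SV)^\perp$ of dimension $\ge(c_3-\delta^{-1})N$) then goes through unchanged and delivers exactly the stated conclusion.
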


\begin{proof}

For $i\in\{1,2,\cdots,N\}$ and $J\subset\{1,2,\cdots,n\}$ denote by 
$$
\Omega_J^i=\{\omega\in\Omega: \psi_{ij}(\omega)=1,j\in J;\quad \psi_{ij}(\omega)=0,j\notin J\}.
$$
 For each $i,j$ let $g_{ij}'$ be an i.i.d. copy of $g_{ij}$ with $g_{ij}'$ mutually independent.

In this proof we take $$\tau=\frac{1}{2}(\delta^{-1/4}-\delta^{-1/3}).$$
Suppose that a subset $J\subset\{1,2,\cdots,n\}$ satisfies, for the fixed $y\in\mathbb{R}^n$,
\begin{equation}
 \label{assumptiononthej}\mathbb{P}_{\Omega_J^i}\left\{|\sum_{j=1}^n \psi_{ij}(g_{ij}-g_{ij}')y_j|>h_{\eqref{lemma3.9}}\right\}\geq 2\tau,   
\end{equation}
then for any $\lambda\in\mathbb{R}$ we have
$$\begin{aligned}
&\mathbb{P}_{\Omega^i_J}\{\lambda-\frac{h_{\eqref{lemma3.9}}}{2}\leq\sum_{j=1}^n\psi_{ij}g_{ij}y_j\leq\lambda+\frac{h_{\eqref{lemma3.9}}}{2}\}^2\\&\leq\mathbb{P}_{\Omega^i_J}\{|\sum_{j=1}^n\psi_{ij}(g_{ij}-g_{ij}')y_j|\leq h_{\eqref{lemma3.9}}\}\leq 1-2\tau,
\end{aligned}$$
    whence we have, recalling the definition in \eqref{631analogously},
    $$
\mathcal{Q}_{\Omega^i_J}(\sum_{j=1}^n\psi_{ij}g_{ij}y_j,\frac{h_{\eqref{lemma3.9}}}{2})\leq\sqrt{1-2\tau}\leq 1-\tau.
    $$
For each $i$ let $L_i$ be a subset of $\{0,1\}^{\{1,2,\cdots,n\}}$ defined as 
$$
L_i:=\left\{J\subset\{1,2,\cdots,n\}:\quad \mathcal{Q}_{\Omega^i_J}(\sum_{j=1}^n \psi_{ij}g_{ij}y_j,\frac{h_{\eqref{lemma3.9}}}{2})\leq 1-\tau\right\};\quad \mathcal{E}_i=\cup_{J\in L_i}\Omega_J^i,
$$ then the previous argument implies that any $J$ satisfying \eqref{assumptiononthej} must satisfy $J\in L_i.$

From the law of total probability, we have
$$
\mathbb{P}\{|\sum_{j=1}^n\psi_{ij}(g_{ij}-g_{ij}')y_j|>h_{\eqref{lemma3.9}}\}=\sum_J \mathbb{P}_{\Omega^i_J}\{|\sum_{j=1}^n\psi_{ij}(g_{ij}-g_{ij}')y_j|>h_{\eqref{lemma3.9}}\}\mathbb{P}(\Omega^i_J).
$$ Then combining Lemma \ref{lemma3.9} and the previous deductions we get that
$$
\delta^{-1/4}\leq \sum_{J\in L_i}\mathbb{P}(\Omega^i_J)+2\tau\sum_{J\notin L_i}\mathbb{P}(\Omega^i_J)\leq 2\tau+\mathbb{P}(\mathcal{E}_i).
$$ We conclude that $\mathbb{P}(\mathcal{E}_i)\geq\delta^{-1/3}.$
Now each $\mathcal{E}_i$ are independent and $\mathbb{P}(\mathcal{E}_i)\geq \delta^{-1/3}$. Then by Bernstein's inequality, if we set
$$
\Omega_y=\{\Psi\in\{0,1\}^{N\times n}:|\{i\in\{1,2,\cdots,N\}:
\{j\in\{1,\cdots,n\}:\psi_{i,j}=1\}\in L_i
\}|\geq N\delta^{-1/2}\},
$$ then for a randomly selected label $\Psi$ we have $\mathbb{P}(\Psi\in\Omega_y)\geq 1-\exp(-w_{\eqref{lemma3.10}}N)$ for some $w_{\eqref{lemma3.10}}>0$ only relying on $\delta$. We have taken the notation $\Omega_y$ as this event implicitly depends on the vector $y\in\mathbb{R}^n$ chosen, and we have made a slight abuse of notation by identifying an event on the labels $\Psi$ with a collection of labels $\Psi\in\{0,1\}^{N\times n}$, so that $\Omega_y$ means the collection of labels $\Psi=(\psi_{ij})$ on $\{0,1\}^{N\times n}$ such that for at least $N\delta^{-1/2}$ many $i\in [n]$ we have $\mathcal{Q}(\sum_{j=1}^n\psi_{ij}g_{ij}y_j,\frac{h_{\eqref{lemma3.9}}}{2})\leq 1-\tau.$

Now for this $y\in\mathbb{R}^n$ let $E_y=\operatorname{span}\{e_j\}_{j\in \operatorname{supp}y}$. For each $\Psi\in\Omega_y$, denote by
$$
m:=|\{i\in \{1,2,\cdots,N\}:\mathcal{Q}_{\Psi}(\sum_{j=1}^n \psi_{ij}y_j,\frac{h_{\eqref{lemma3.9}}}{2})\leq 1-\tau\}|,
$$
then $m\geq N\delta^{-1/2}\geq \delta^\frac{1}{2}n.$
Thus, by Corollary \ref{corollary3.3}, we choose $\kappa=\delta^{-1/2}-\delta^{-1}$ and consider any $n$-dimensional fixed subspace $F\subset\mathbb{R}^N$, take $\ell=4(C_{\eqref{theorem3.2}}C_{\eqref{corollary3.3}})^2/\tau$, we get that for any label $W_y\in \mathcal{E}_y$, $$
\mathbb{P}_{\Omega_{W_y}}\{\text{dist}(G_1y,F)\leq \frac{h_{\eqref{lemma3.9}}}{2\ell}\sqrt{\kappa N}\}\leq 2^{-\kappa N/\ell}.
$$
Since the $n$-dimensional subspace $V_{G_0,G_1}(E)$ is independent of $G_1y,$ we may take $F=V_{G_0,G_1}(E)$ and conclude the proof.\end{proof}

Now we can take the union bound over almost sparse vectors.

\begin{Proposition}\label{proposition3.11}(Almost sparse vectors from $\mathbb{S}_a^{n-1}(\sqrt{N})$). Fix $\gamma>0$ and $\delta>1$ we can find $N_\eqref{proposition3.11}$ and $h_\eqref{proposition3.11}>0$ depending only on $\gamma,\delta$. We define $\theta_\eqref{proposition3.11}=\frac{1-\delta^{-1/4}}{C_\eqref{corollary3.3}}\sqrt{\frac{\gamma}{16}}$
    and let  $N\geq\max(N_\eqref{proposition3.11},\delta n)$. Suppose that $g_{ij}$ satisfies, for some $z\in\mathbb{R}$, that 
    $$
\min(\mathbb{P}\{z-\sqrt{n}\leq g_{ij}\leq z-1\},\mathbb{P}\{z+1\leq g_{ij}\leq z+\sqrt{n}\}\geq \gamma.
    $$ Then denote by $S:=S_a^{n-1}(\sqrt{N})\setminus \mathbb{S}_p^{n-1}(\theta)$, we can find a collection of labels $\Omega_a\subset\{0,1\}^{N\times n}$, with $\mathbb{P}(\Psi\in\Omega_a)\geq 1-\exp(-w_\eqref{lemma3.10}N/2)$, such that for any $W_a\in \Omega_a$,
    $$
\mathbb{P}_{W_a}\{\inf_{y\in S}\|G_0y+G_1y\|\leq h_\eqref{proposition3.11}\sqrt{N}
\}\leq\exp(-w_\eqref{lemma3.10}N/2)+\exp(-N).
    $$
\end{Proposition}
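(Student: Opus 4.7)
The plan is to combine the quasi-random label selection of Lemma \ref{lemma3.10} with the cardinality bound for $\sqrt{N}$-sparse nets of Lemma \ref{lemma3.7}, stitching the per-vector distance estimates into a uniform lower bound on $\inf_{y\in S}\|(G_0+G_1)y\|$ via Proposition \ref{proposition3.13.13.1}, with the operator-norm correction term absorbed using Lemma \ref{lemma3.444}.

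First I would feed the anti-concentration hypothesis on $g_{ij}$ into Lemma \ref{lemma3.56}, producing an integer $\ell\in[0,\lfloor\log_2\sqrt{n}\rfloor]$, a shift $\lambda$, and disjoint Borel sets $H_1,H_2$ with $d:=\operatorname{dist}(H_1,H_2)\geq 2^\ell$ and $r:=\min(\mathbb{P}\{g_{ij}-\lambda\in H_1\},\mathbb{P}\{g_{ij}-\lambda\in H_2\})\geq c_\eqref{lemma3.56}\gamma 2^{-\ell/8}$. These are exactly the anti-concentration inputs that Lemmas \ref{lemma3.9} and \ref{lemma3.10} consume, and they also fix the peakiness threshold, namely $\theta_\eqref{proposition3.11}\sim 2h_\eqref{lemma3.9}/d$ at $t=1/2$. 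For each $y\in S$, membership in $\mathbb{S}_a^{n-1}(\sqrt{N})$ yields a coordinate set $J(y)$ with $|J(y)|\leq\sqrt{N}$ and $\|y\chi_{J(y)}\|\geq 1/2$, while the exclusion $y\notin\mathbb{S}_p^{n-1}(\theta_\eqref{proposition3.11})$ enforces $\|y\chi_{J(y)}\|_\infty\leq\theta_\eqref{proposition3.11}$. Applying Lemma \ref{lemma3.7} with $m=\lceil\sqrt{N}\rceil$ and a parameter $\epsilon>0$ chosen at the end, I obtain a net $\mathcal{N}$ of $\sqrt{N}$-sparse vectors with $|\mathcal{N}|\leq(C_\eqref{lemma3.7}n/(\epsilon\sqrt{N}))^{\sqrt{N}}=\exp(O(\sqrt{N}\log N))$, such that every $y\in S$ admits $y'\in\mathcal{N}$ with $\|\operatorname{Proj}_{E_{y'}}y-y'\|\leq\epsilon$, where $E_{y'}:=\operatorname{span}\{e_j:j\in\operatorname{supp}(y')\}$.

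For each net point $y'$ I then apply Lemma \ref{lemma3.10} (with $t=1/2$) to obtain a label event $\Omega_{y'}$ of probability at least $1-\exp(-w_\eqref{lemma3.10}N)$ on which the conditional probability that $\operatorname{dist}(G_1y',V_{G_0,G_1}(E_{y'}))\leq h_\eqref{lemma3.10}h_\eqref{lemma3.9}\sqrt{N}$ is at most $2\exp(-w_\eqref{lemma3.10}N)$. Setting $\Omega_a:=\bigcap_{y'\in\mathcal{N}}\Omega_{y'}$ and union bounding twice (once to produce $\Omega_a$ with probability at least $1-\exp(-w_\eqref{lemma3.10}N/2)$, once to ensure that the distance estimate holds simultaneously over $\mathcal{N}$ conditional on $\Omega_a$), I conclude that outside a further event of probability at most $\exp(-w_\eqref{lemma3.10}N/2)$, every $y'\in\mathcal{N}$ satisfies the distance bound. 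Combining this with Lemma \ref{lemma3.444}, which guarantees $\|G_1\|\leq C_\eqref{lemma3.444}\sqrt{N}$ outside probability $\exp(-N)$, and feeding the result into Proposition \ref{proposition3.13.13.1} with $D_1=G_1$, $D_2=G_0$, yields $\inf_{y\in S}\|(G_0+G_1)y\|\geq h_\eqref{lemma3.10}h_\eqref{lemma3.9}\sqrt{N}-\epsilon C_\eqref{lemma3.444}\sqrt{N}$, and choosing $\epsilon:=h_\eqref{lemma3.10}h_\eqref{lemma3.9}/(2C_\eqref{lemma3.444})$ produces the claimed $h_\eqref{proposition3.11}\sqrt{N}$ lower bound with $h_\eqref{proposition3.11}:=h_\eqref{lemma3.10}h_\eqref{lemma3.9}/2$.

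The main technical obstacle is the two-stage union bound: the net cardinality $\exp(O(\sqrt{N}\log N))$ must be crushed against the per-point failure probability $\exp(-w_\eqref{lemma3.10}N)$ to leave slack of order $\exp(-w_\eqref{lemma3.10}N/2)$, and this is what forces the lower threshold $N_\eqref{proposition3.11}$. A secondary subtlety is tracking the $\ell$-dependence injected by Lemma \ref{lemma3.56}: one must verify that the proposition's $\theta_\eqref{proposition3.11}$, which is stated without $\ell$, uniformly dominates $2h_\eqref{lemma3.9}/d$ once the $2^{-\ell/8}$ factor in $r$ and the $2^\ell$ factor in $d$ are balanced.
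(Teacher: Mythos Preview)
Your overall architecture matches the paper's proof: construct a $\sqrt{N}$-sparse net via Lemma~\ref{lemma3.7}, apply Lemma~\ref{lemma3.10} to every net point, intersect the label events to form $\Omega_a$, and pass from the per-point distance estimate to a uniform bound on $\inf_{y\in S}\|(G_0+G_1)y\|$ through Proposition~\ref{proposition3.13.13.1} with the $\|G_1\|$ correction supplied by Lemma~\ref{lemma3.444}. The cardinality-versus-probability accounting you sketch is exactly what the paper does.

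The one substantive divergence is your invocation of Lemma~\ref{lemma3.56}. The paper does \emph{not} use Lemma~\ref{lemma3.56} here; it simply reads off $H_1=[z-\sqrt{n},z-1]$, $H_2=[z+1,z+\sqrt{n}]$ directly from the hypothesis, setting $d=2$, $r=\gamma$, $t=\tfrac{1}{2}$. With these choices $\tfrac{2h_{\ref{lemma3.9}}}{d}=h_{\ref{lemma3.9}}$ equals the stated $\theta_{\ref{proposition3.11}}$ on the nose, so the non-peakiness constraint $\|y\|_\infty\le\theta_{\ref{proposition3.11}}$ feeds straight into Lemma~\ref{lemma3.9} with no residual parameter. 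Lemma~\ref{lemma3.56} is reserved for Proposition~\ref{propos13.13}, where the multi-scale decomposition is genuinely needed to handle spread vectors. By importing it into the almost-sparse case you manufacture the ``secondary subtlety'' you flag, and in fact your stated direction of domination is reversed: to apply Lemma~\ref{lemma3.9} you need $\theta_{\ref{proposition3.11}}\le \tfrac{2h_{\ref{lemma3.9}}}{d}$, not the other way round, and once the $2^{-\ell/8}$ loss in $r$ and the extra $c_{\ref{lemma3.56}}$ factor enter this may fail for the specific $\theta_{\ref{proposition3.11}}$ fixed in the statement. Dropping Lemma~\ref{lemma3.56} and taking $d=2$ removes the issue entirely.
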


\begin{proof}
For any fixed $\gamma>0$, $\delta>1$ we take $d=2$, $r=\gamma$ and $t=\frac{1}{2}$. Let $N_\eqref{proposition3.11}$ be the smallest integer larger than $\frac{2\max(1,C_\eqref{lemma3.444})}{h_\eqref{lemma3.9}h_\eqref{lemma3.10}}$ such that 
   
    $$ 2(C_\eqref{lemma3.7}N)^{3\sqrt{N}}\leq\exp(w_\eqref{lemma3.10}N/2).
    $$ Assume without loss of generality $z=0$ (otherwise, we replace $G_0,G_1$ by $G_0-z\mathbf{1},G_1+z\mathbf{1}$). Take $H_1=[-\sqrt{N},1],H_2=[1,\sqrt{N}]$ and $H=H_1\cup H_2$. 

    Now we denote by $T\subset B_2^n$ the $\sqrt{N}$-sparse vectors with $\ell^2$ norm no less than $\frac{1}{2}$ and $\ell^\infty$ norm bounded by $\theta$. Thanks to Lemma \ref{lemma3.7} we find a finite net $\mathcal{N}\subset T,$ $|\mathcal{N}|\leq ( C_\eqref{lemma3.7}N)^{3\sqrt{N}}$, and that for any $y\in S$ we have $y'=y'(y)\in \mathcal{N}$ having $|y\chi_{\operatorname{supp}y'}-y'\|\leq N^{-2}.$
    Let 
    $$
\Omega_a:=\cap_{y\in\mathcal{N}}\Omega_y, 
    $$where $\Omega_y$ was defined in Lemma \ref{lemma3.10}.
    Then combining Lemma \ref{lemma3.10} with a simple union bound, we have $\mathbb{P}(\Psi\in\Omega_a)\geq 1-\exp(-w_{\eqref{lemma3.10}}N/2)$. 

For any $W_a\in\Omega_a$, consider the event
$$\mathcal{E}_{W_a}=\{\omega\in\Omega:\Psi=W_a,
\operatorname{dist}(G_1y,V_{G_0,G_1}(E_{y'}))\geq h_\eqref{lemma3.9}h_\eqref{lemma3.10}\sqrt{N}\text{ for any }y'\in\mathcal{N}\},
$$ where we recall that $E_{y'}=\operatorname{span}\{e_j\}_{j\in\operatorname{supp}y'}$. Then 
$$\mathbb{P}_{W_a}(\mathcal{E}_{w_a})\geq 1-2|\mathcal{N}|\exp(-w_\eqref{lemma3.10}N)\geq 1-\exp(-w_\eqref{lemma3.10}N/2).
$$
Now we take any event $\omega\in\mathcal{E}_{W_a}$, on this event we have
$$
\operatorname{dist}(G_1(\omega)y',G_1(\omega)(E_{y'}^\perp)+G_0(\mathbb{R}^n))\geq h_\eqref{lemma3.9}h_\eqref{lemma3.10}\sqrt{N}.
$$
Now we apply Proposition \ref{proposition3.13.13.1} and Lemma \eqref{lemma3.444}
which shows $\|G_1\|\leq C_\eqref{lemma3.444}\sqrt{N}$ with probability at least $1-\exp(-N)$,
to deduce that for this $\omega\in\mathcal{E}_{W_a}$, 
$$
\inf_{y\in S}\|(G_0+G_1)(\omega)y\|\geq h_\eqref{lemma3.9}h_\eqref{lemma3.10}\sqrt{N}-C_\eqref{lemma3.444}N^{-1/2}\geq \frac{1}{2}h_\eqref{lemma3.9}h_\eqref{lemma3.10}\sqrt{N}.
$$
\end{proof}

Next we take a union bound over the set of peaky vectors (vectors that have a large $\ell^\infty$ norm).
\begin{lemma}\label{lemma3.12}
    Fix $\delta>1$, $\gamma>0$ and let $N,n$ be that $N\geq \delta n$. Assume that $\mathcal{Q}(g_{ij},1)\leq 1-\gamma$. Then for any fixed $\theta>0$ there exists a collection of labels $\Omega_p\subset\{0,1\}^{N\times n},$ with $\mathbb{P}(\Psi\in\Omega_p)\geq 1-\exp(-w_\eqref{lemma3.12}n)$ such that for any $W_p\in\Omega_p,$
    $$
\mathbb{P}_{W_p}\{\inf_{y\in \mathbb{S}_p^{n-1}(\theta)}\|Gy\|\leq h_\eqref{lemma3.12}\theta\sqrt{N}\}\leq n\exp(-w_\eqref{lemma3.12}N),
    $$ where $h_\eqref{lemma3.12}$ and $w_\eqref{lemma3.12}$ depend only on $\delta$ and $\gamma$.
\end{lemma}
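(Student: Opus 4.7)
The plan is to reduce the peaky-vector infimum to the distance from one random column of $G$ to the span of the remaining columns, and then invoke the anti-concentration Corollary \ref{corollary3.3} after restricting to the coordinates selected by the label. First, for any $y\in\mathbb{S}^{n-1}$ with $|y_{j_0}|\geq\theta$ one has $\|Gy\|\geq\theta\cdot\mathrm{dist}(Ge_{j_0},H_{j_0})$, where $H_{j_0}:=\mathrm{span}\{Ge_j:j\neq j_0\}\subset\mathbb{R}^N$. After taking the infimum over $y\in\mathbb{S}_p^{n-1}(\theta)$ and a union bound over the $n$ possible peak indices $j_0$, the lemma reduces to showing $\mathrm{dist}(Ge_{j_0},H_{j_0})\geq h_\eqref{lemma3.12}\sqrt{N}$ with conditional probability $\geq 1-\exp(-w_\eqref{lemma3.12}N)$ for each fixed $j_0$.

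Next I would define $\Omega_p$ purely through label density. Writing $k_{j_0}:=|\{i:\psi_{i,j_0}=1\}|$, set
\[
\Omega_p:=\{\Psi:\, k_{j_0}\geq(\tfrac1\delta+\epsilon)N\text{ for all }j_0\in[n]\}
\]
for a small constant $\epsilon=\epsilon(\delta)>0$. Using the assumption $\mathbb{P}(\psi_{i,j_0}=1)\geq(3\delta+1)/(4\delta)>1/\delta$ from Definition \ref{line276definition}, a Hoeffding/Chernoff bound and a union bound over the $n$ columns give $\mathbb{P}(\Psi\notin\Omega_p)\leq n\exp(-cN)\leq\exp(-w_\eqref{lemma3.12}n)$, since $N\geq\delta n$.

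For the per-column estimate, fix $W_p\in\Omega_p$ and $j_0$, and write $Ge_{j_0}=v+\eta$ with $v:=G_0e_{j_0}$ deterministic and $\eta:=(\psi_{i,j_0}g_{i,j_0})_{i=1}^N$ supported on $S_{j_0}:=\{i:\psi_{i,j_0}=1\}$. Because $\eta$ has deterministic zero entries outside $S_{j_0}$, Corollary \ref{corollary3.3} does not apply directly, so I would pass to the coordinate subspace $E_{S_{j_0}}:=\mathrm{span}\{e_i:i\in S_{j_0}\}\cong\mathbb{R}^{S_{j_0}}$. Splitting any $h\in H_{j_0}$ orthogonally as $P_{E_{S_{j_0}}}h+P_{E_{S_{j_0}}^\perp}h$ and using the Pythagorean identity yields the key bound
\[
\mathrm{dist}(Ge_{j_0},H_{j_0})\;\geq\;\mathrm{dist}(\tilde v+\tilde\eta,\tilde H_{j_0}),
\]
where tildes denote restrictions to $\mathbb{R}^{S_{j_0}}$ and $\tilde H_{j_0}:=P_{E_{S_{j_0}}}(H_{j_0})$ has dimension at most $n-1$. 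Crucially $\tilde\eta=(g_{i,j_0})_{i\in S_{j_0}}$ consists of $k_{j_0}$ i.i.d.\ entries with $\mathcal Q(g_{i,j_0},1)\leq 1-\gamma$, and $\tilde H_{j_0}$ is independent of it (it involves only columns $j\neq j_0$).

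Finally I would apply Corollary \ref{corollary3.3} to $\tilde\eta$ with the subspace $F:=\tilde H_{j_0}^\perp\subset\mathbb{R}^{S_{j_0}}$, whose dimension satisfies $d\geq k_{j_0}-n+1\geq\epsilon N$ on $\Omega_p$. Choosing $\ell$ depending only on $\gamma$ so that $C_\eqref{corollary3.3}/\sqrt{\ell\gamma}\leq 1/2$ gives $\mathcal Q(P_F\tilde\eta,\sqrt d/\ell)\leq 2^{-d/\ell}\leq\exp(-c'N)$; evaluating this Lévy function at $-P_F\tilde v$ then yields $\mathrm{dist}(\tilde v+\tilde\eta,\tilde H_{j_0})\geq(\sqrt\epsilon/\ell)\sqrt N$ with the desired probability, and a union bound over $j_0\in[n]$ concludes. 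The main technical obstacle is the restriction step: one must confirm both that collapsing to $\mathbb{R}^{S_{j_0}}$ can only shrink distances and that $d=\dim\tilde H_{j_0}^\perp$ stays linear in $N$, which is precisely where the strict inequality $\mathbb{P}(\psi_{ij}=1)>1/\delta$ (equivalent to $(3\delta+1)/(4\delta)>1/\delta$ for $\delta>1$) is consumed.
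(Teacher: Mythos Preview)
Your proposal is correct and follows essentially the same route as the paper: reduce the peaky infimum to a column-to-span distance, define $\Omega_p$ by a column-density Chernoff bound, and then invoke Corollary~\ref{corollary3.3} after conditioning on the remaining columns. The only cosmetic difference is in neutralising the deterministic zeros of $\eta$: you restrict the ambient space to $\mathbb{R}^{S_{j_0}}$ and work with $\tilde H_{j_0}=P_{E_{S_{j_0}}}H_{j_0}$, whereas the paper keeps the full $\mathbb{R}^N$ but enlarges the subspace $F$ by adjoining the at most $\tfrac12(N-n+1)$ coordinate rows $\{e_i:\psi_{i,j_0}=0\}$ before projecting; the two manoeuvres are dual and yield the same codimension count.
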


\begin{proof}
In the proof we fix $$d=N-n+1,\quad d'=\lceil \frac{N+n}{2}\rceil+1.$$

    Let $\Omega_p$ be a subset of labels that satisfy 
$$\Omega_p:=\{\Psi\in\{0,1\}^{N\times n}:
\text{ for each } j\in [n],\quad |i\in\{1,\cdots,N\}:\psi_{ij}=1|\geq  N-\frac{1}{2}d
\}.
$$ Then since $\mathbb{P}(\psi_{ij}=1)\geq \frac{1+\frac{3}{4}(\delta-1)}{\delta}$, an application of Chernoff's inequality implies 
$$
\mathbb{P}(\Omega_p)\geq 1-n\exp(-w_\eqref{lemma3.12}N).
$$
Now we take any $W_p\in\Omega_p$, then for any subspace $F\subset\mathbb{R}^N$ of dimension $n-1$, we have for each $j$, and for each $\ell>0$,
\begin{equation}\label{whatline6323}
\mathbb{P}_{W_p}\{\operatorname{dist}(\text{col}_j(G),F)\leq\sqrt{d'}/\ell\}
\leq \mathcal{Q}(\operatorname{Proj}_{F_0^{\perp}}(\mathbf{v}),\sqrt{d'}/\ell)\leq (C_\eqref{corollary3.3}/\sqrt{\ell\gamma})^{d'/\ell},
\end{equation} where $F_0$ is a linear subspace of dimension at most $d'$ and $\mathbf{v}$ is a $N$-dimensional random vector $\mathbf{v}=(b_{1j},b_{2j},\cdots b_{Nj})^t$ where each $b_{ij}$ is independent and $\mathcal{Q}(b_{ij},1)\leq 1-\gamma$. More precisely, for this fixed $j$ we can form $F_0$ as the linear span of $F$ and all the $i$-th rows of $G$ such that $\psi_{ij}=0$, that is, we take the further projection over an additional $\frac{d}{2}$ rows of $G$. Then $F_0$ has dimension at most $\frac{N+n}{2}+1.$ The vector $\mathbf{v}$ is nothing but the $j$-th column of $G$ where we set all labels $\psi_{ij}=1,i=1,\cdots,N$. The second inequality in \eqref{whatline6323} uses Corollary \ref{corollary3.3}. Here we use the obvious fact that  $\mathcal{Q}(t+g_{ij},1)\leq1-\gamma$ for any  $t\in\mathbb{R}$.

Then if we take $\ell=\lfloor 4  C_\eqref{corollary3.3}^2/\gamma\rceil$, by independence of $\operatorname{col}_j(G)$, we conclude that
$$
\mathbb{P}_{W_p}(\operatorname{dist}(\operatorname{Col}_j(G),\text{span}\{\text{col}_k(G)_{k\neq j}\})\leq h\sqrt{d'})\leq \exp(-wd'),
$$
where $h,w>0$ are two constants only relying on $\gamma$ and $\delta$.
For each $W_p\in\Omega_p$ denote by 
$$
\mathcal{E}_{W_p}:=\{\Psi=W_p,\text{dist}(\text{col}_j(G(\omega)),\text{span}(\text{col}_k(G(\omega))_{k\neq j})\geq h\sqrt{d'}\text{ for any }j=1,\cdots,n\}.
$$Then for any $\omega\in \mathcal{E}_{W_p},$ take any $y=(y_1,\cdots,y_n)\in \mathbb{S}_p^{n-1}(\theta),$ we can find $j=j(y)$ with $|y_j|\geq\theta$ and that 
$$
\|G(\omega)y\|\geq\theta \text{dist}(\text{col}_j(G(\omega)),\text{span}\{\text{col}_k(G(\omega))\}_{k\neq j})\geq h\theta\sqrt{d'}.
$$
Then we have, for any $W_p\in \Omega_p$,
$$
\mathbb{P}_{W_p}\{\inf_{y\in\mathbb{S}_p^{n-1}(\theta)}\|Gy\|\leq h\theta\sqrt{d'}\}\leq n\exp(-wd')
$$
which completes the proof.
\end{proof}

Finally we consider the complement $\mathbb{S}^{n-1}\setminus(\mathbb{S}_a^{n-1}(\sqrt{N})\cup \mathbb{S}_p(\theta))$. The next Proposition and its proof is a reformulation of \cite{tikhomirov2016smallest}, Proposition 17.

\begin{Proposition}
    \label{propos13.13}
    Fix $\delta>1,\gamma>0$, and let $z\in\mathbb{R}$ be such that
    $$\min(\mathbb{P}\{z-\sqrt{N}\leq g_{ij}\leq z-1\},\mathbb{P}\{z+1\leq g_{ij}\leq z+\sqrt{N}\})\geq\gamma.
    $$ Denote by $S:=\mathbb{S}^{n-1}\setminus \mathbb{S}_a^{n-1}(\sqrt{N})$.
    Then we can find $N_\eqref{propos13.13}\in\mathbb{N},$ $h_\eqref{propos13.13}>0$ depending only on $\gamma,\delta$ such that, for $N\geq\max(N_\eqref{propos13.13},\delta n)$, we can find a subset of labels $\Omega_g\subset\{0,1\}^{N\times n}$ with $\mathbb{P}(\Psi\in\Omega_g)\geq 1-\exp(-w_\eqref{lemma3.10}N/2)$, such that for any $W_g\in \Omega_g,$ we have
    $$
\mathbb{P}_{W_g}(\inf_{y\in S}\|Gy\|\leq h_\eqref{propos13.13}\sqrt{N})\leq\exp(-w_\eqref{lemma3.10}N/2).
    $$\end{Proposition}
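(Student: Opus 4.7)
The plan is to adapt the pattern used in Propositions \ref{proposition3.11} and Lemma \ref{lemma3.12} to the residual set $S = \mathbb{S}^{n-1} \setminus \mathbb{S}_a^{n-1}(\sqrt{N})$ of ``spread'' vectors. The three ingredients are: the scale-adaptive two-point anti-concentration furnished by Lemma \ref{lemma3.56}, the coordinate-extraction provided by Lemma \ref{lemma3.82}, and the quasi-random label selection of Lemmas \ref{lemma3.9} and \ref{lemma3.10}, all welded together through the distance-based invertibility principle of Proposition \ref{proposition3.13.13.1}.

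First, I would apply Lemma \ref{lemma3.56} to the common distribution of $g_{ij}$ (whose hypothesis is exactly the displayed inequality on $z$) to obtain an integer $\ell \in [0,\lfloor \log_2\sqrt{N}\rfloor]$, a shift $\lambda$, and Borel sets $H_1,H_2 \subset [-2^{\ell+2},2^{\ell+2}]$ with separation $d := 2^\ell$ and common mass $r := c_\eqref{lemma3.56}\gamma 2^{-\ell/8}$. The shift $\lambda$ may be absorbed into the deterministic part by writing $Gy = (G_0 + \lambda \Psi)y + (G_1 - \lambda \Psi)y$, so we may assume $\lambda = 0$. Next, for each $y \in S$ the hypothesis $y \notin \mathbb{S}_a^{n-1}(\sqrt{N})$ puts Lemma \ref{lemma3.82} at our disposal: it supplies, for a sparsity parameter $m = m(\ell)$ to be chosen, a subset $J = J(y)$ with $|J| \leq m$, $\|y\chi_J\| \geq \tfrac{1}{2}\sqrt{m/n}$, and $\|y\chi_J\|_\infty \leq N^{-1/4}$. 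I choose $m(\ell)$ to be the smallest integer compatible with the hypothesis $\|y\chi_J\|_\infty \leq 2h_\eqref{lemma3.9}/d$ of Lemma \ref{lemma3.10}, evaluated at $t = \tfrac12\sqrt{m/n}$; a direct computation shows this forces only $m \gtrsim n\, 2^{\ell/8}/(\gamma\sqrt{N})$.

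I then build an $N^{-2}$-net $\mathcal{N}$ for the family $\{y\chi_J : y \in S\}$ via Lemma \ref{lemma3.7}, with cardinality bounded by $(C_\eqref{lemma3.7}N)^{m}$, and set $\Omega_g := \bigcap_{y' \in \mathcal{N}} \Omega_{y'}$ using the label subsets produced by Lemma \ref{lemma3.10}. A union bound together with the cardinality estimate gives $\mathbb{P}(\Psi \in \Omega_g) \geq 1 - \exp(-w_\eqref{lemma3.10} N/2)$, provided $m\log N = o(N)$, which holds by our choice of $m$ and the hypothesis $N \geq \delta n$. For any $W_g \in \Omega_g$, Lemma \ref{lemma3.10} applied to each $y' \in \mathcal{N}$ combined with a second union bound guarantees that, with conditional probability at least $1 - 2|\mathcal{N}|\exp(-w_\eqref{lemma3.10}N) \geq 1 - \exp(-w_\eqref{lemma3.10} N/2)$, every net element satisfies $\operatorname{dist}(G_1 y', V_{G_0,G_1}(E_{y'})) \geq h_\eqref{lemma3.9} h_\eqref{lemma3.10} \sqrt{N}$, where $E_{y'} = \operatorname{span}\{e_j : j \in \operatorname{supp} y'\}$. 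Conditioning further on $\|G_1\| \leq C_\eqref{lemma3.444}\sqrt{N}$ (Lemma \ref{lemma3.444}) and invoking Proposition \ref{proposition3.13.13.1} with $D_1 = G_1$, $D_2 = G_0$, $D = G$, and $\epsilon = N^{-2}$ yields the uniform bound $\inf_{y \in S}\|Gy\| \geq h_\eqref{lemma3.9} h_\eqref{lemma3.10}\sqrt{N} - N^{-2}C_\eqref{lemma3.444}\sqrt{N} \geq \tfrac12 h_\eqref{lemma3.9} h_\eqref{lemma3.10}\sqrt{N}$ for $N$ large enough, which is the desired conclusion with $h_\eqref{propos13.13} = \tfrac12 h_\eqref{lemma3.9} h_\eqref{lemma3.10}$.

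The main obstacle is the joint tuning of the sparsity parameter $m$: the bound $\|y\chi_J\|_\infty \leq 2h_\eqref{lemma3.9}/d$ imposes an $\ell$-dependent lower bound on $m$, while the net-cardinality constraint $|\mathcal{N}|\cdot\exp(-w_\eqref{lemma3.10}N) \ll 1$ imposes the upper bound $m \ll N/\log N$. These are simultaneously satisfiable thanks to $N \geq \delta n$ with $\delta > 1$ and $\ell \leq \tfrac{1}{2}\log_2 N$, but both the sparsity $m$ and the resulting constant $h_\eqref{propos13.13}$ depend implicitly on the value of $\ell$ delivered by Lemma \ref{lemma3.56}; since $\ell$ is a fixed attribute of the distribution of $g_{ij}$, this dependence is harmless and gets absorbed into $h_\eqref{propos13.13}$ and $N_\eqref{propos13.13}$.
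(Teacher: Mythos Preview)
Your overall architecture is right—apply Lemma~\ref{lemma3.56} to get a scale $\ell$, use Lemma~\ref{lemma3.82} to extract a sparse piece from each $y\in S$, build a net via Lemma~\ref{lemma3.7}, invoke Lemma~\ref{lemma3.10} on each net point, then finish with Proposition~\ref{proposition3.13.13.1}. This is exactly the paper's route. However, your parameter choices contain a genuine gap that destroys the conclusion.

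The problem is your choice of $m$ as the \emph{smallest} integer compatible with $\|y\chi_J\|_\infty\le 2h_{\ref{lemma3.9}}/d$. Recall $h_{\ref{lemma3.9}}=\frac{1-\delta^{-1/4}}{C_{\ref{theorem3.2}}}\sqrt{r/16}\,t\,d$ with $t=\tfrac12\sqrt{m/n}$, so $h_{\ref{lemma3.9}}$ is \emph{proportional to} $\sqrt{m/n}$. Taking $m$ at its minimum $m\sim n/\sqrt{N}$ forces $t\sim N^{-1/4}$ and hence $h_{\ref{lemma3.9}}\sim 2^{\ell}N^{-1/4}$. Your final bound $\tfrac12 h_{\ref{lemma3.9}}h_{\ref{lemma3.10}}\sqrt{N}$ is then of order $2^{\ell}N^{1/4}$, not $\sqrt{N}$; when $\ell$ is small (e.g.\ $\ell=0$, which is perfectly possible for a Gaussian) you only get $N^{1/4}$. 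Thus $h_{\ref{propos13.13}}=\tfrac12 h_{\ref{lemma3.9}}h_{\ref{lemma3.10}}$ is not an $N$-independent constant and the proposition is not proved. (Relatedly, $\ell$ is delivered by Lemma~\ref{lemma3.56} with input $\sqrt{N}$, so calling it a ``fixed attribute of the distribution'' is not quite accurate either.)

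The paper fixes this by taking $m$ of order $n$, specifically $m=\lceil \tau_0 n/2^{\ell/4}\rceil$, which keeps $t$ and $h_{\ref{lemma3.9}}$ of constant order. The price is that the net now has exponent $\sim n$, so your precision $\epsilon=N^{-2}$ would make $|\mathcal N|$ far exceed $e^{w_{\ref{lemma3.10}}N}$. Instead one must take $\epsilon$ of constant order, namely $\epsilon=h_{\ref{lemma3.9}}h_{\ref{lemma3.10}}/(2C_{\ref{lemma3.444}}R)$ with $R=2^{\ell+2}$, and then choose $\tau_0\in(0,1]$ small enough that $\bigl(\frac{64C_{\ref{lemma3.444}}C_{\ref{lemma3.7}}2^{s/2}}{h_{\ref{lemma3.10}}f_0\tau_0^{3/2}}\bigr)^{2^{-s/4}\tau_0}\le \exp(w_{\ref{lemma3.10}}/4)$ for \emph{all} $s\ge 0$. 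This uniform-in-$\ell$ tuning is the missing idea: it simultaneously controls the net cardinality and keeps $h_{\ref{propos13.13}}$ a genuine constant depending only on $\gamma,\delta$.
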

\begin{proof}
We denote by $f_0:=\frac{(1-\delta^{-1/4})\sqrt{c_\eqref{lemma3.56}\gamma}}{C_\eqref{theorem3.2}}$, and denote by $\tau_0=\tau_0(\gamma,\delta)$ the largest real number in $(0,1]$ satisfying that for all $s\geq 0,$
$$
(\frac{64C_{\eqref{lemma3.444}}C_{\eqref{lemma3.7}}2^{s/2}}{h_\eqref{lemma3.10}f_0\tau_0^{3/2}})^{2^{-s/4}\tau_0}\leq\exp(w_\eqref{lemma3.10}/4).
$$ Next we take $N_\eqref{propos13.13}=N_\eqref{propos13.13}(\tau,\delta)$ the minimal integer so that for $N\geq N_\eqref{propos13.13}$,
\begin{equation}\label{firstbound123}
\frac{1}{\lfloor N^{1/4}\rfloor}\leq \frac{f_0\sqrt{\tau_0}}{16}N^{-3/16}
,\quad \frac{192\sqrt{N}C_{\eqref{lemma3.444}}C_{\eqref{lemma3.7}}}{h_{\eqref{lemma3.10}}f_0\tau_0^{3/2}}\leq \exp(w_\eqref{lemma3.10}N/4).
\end{equation}
From Lemma \ref{lemma3.56} we can find some $\ell\in[0,\lfloor\log_2\sqrt{N}\rfloor]$, $\lambda\in\mathbb{R}$, two subsets $H_1,H_2\subset[-2^{\ell+2},2^{\ell+2}]$ with $\operatorname{dist}(H_1,H_2)\geq 2^\ell$, $\mathbb{E}[(g_{ij}-\lambda)1_{g_{ij}\in H_1\cup H_2}]=0$, and that $\min(\mathbb{P}(g_{ij}-\lambda\in H_1),\mathbb{P}(g_{ij}-\lambda\in H_2))\geq c_\eqref{lemma3.56}\gamma 2^{-\ell/8}.$

Now we take the parameters
$$
R:=2^{\ell+2},d:=2^{\ell}, r:=c_\eqref{lemma3.56}\gamma 2^{-\ell/8},m:=\lceil \frac{\tau_0n}{2^{\ell/4}}\rceil,t:=\frac{1}{2}\sqrt{\frac{m}{n}},\epsilon:=\frac{h_\eqref{lemma3.10}h_{\eqref{lemma3.9}}}{2C_{\eqref{lemma3.444}}R}.
$$

When $S$ is nonempty, we denote by $T\subset B_2^n$ the set of all $m$-sparse vectors satisfying $\|y\|\geq t,\|y\|_\infty\leq \frac{2h_\eqref{lemma3.9}}{d}$. The first equation in \eqref{firstbound123} implies $\frac{1}{\lfloor N^{1/4\rfloor}}\leq\frac{2h_\eqref{lemma3.9}}{d}$. Thus, thanks to Lemma \ref{lemma3.82}, the net $T$ is nonempty and has the property \eqref{line444}.
By Lemma \ref{lemma3.7}
we find a finite net $\mathcal{N}\subset T$, $|\mathcal{N}|\leq (\frac{c_\eqref{lemma3.7}n}{m\epsilon})^m$, satisfying that for each $y\in S$ we can find $y'=y'(y)\in\mathcal{N}$ such that $\|y\chi_{\operatorname{supp}y'}-y'\|\leq\epsilon$.
Given each $y'\in\mathcal{N}$, denote by $E_{y'}=\operatorname{span}\{e_j\}_{j\in\operatorname{supp}y'}$. Then applying Lemma \ref{lemma3.10}, we can find a subset of labels $\Omega_{y'}$ such that $\mathbb{P}(\Psi\in\Omega_{y'})\geq 1-\exp(-w_\eqref{lemma3.10}N)$ and for each $W_0\in \Omega_{y'}$, we have, recalling definition \eqref{vgog1e} of $V_{G_0,G_1}(E)$,
$$
\mathbb{P}_{W_0}\{\operatorname{dist}(G_1y',V_{G_0,G_1}(E_{y'}))\leq h_\eqref{lemma3.9}h_{\eqref{lemma3.10}}\sqrt{N}\}\leq 2\exp(-w_\eqref{lemma3.10}N).
$$
Now denote by $\Omega_g=\cap_{y'\in \mathcal{N}}\Omega_{y'},$ we have by our assumption on $N$ that $$\begin{aligned}\mathbb{P}(
\Psi\in\Omega_g)&\geq 1-|\mathcal{N}|\exp(-w_\eqref{lemma3.10}N)\\&\geq 1-(\frac{8C_\eqref{lemma3.444}\eqref{lemma3.7}}{\tau_0h_\eqref{lemma3.9}h_\eqref{lemma3.10}})^{2^{-\ell/4}\tau_0n+1}\exp(-w\eqref{lemma3.10}N)\\&\geq 1-(\frac{64C_\eqref{lemma3.444}\eqref{lemma3.7}2^{\ell/2}}{h_\eqref{lemma3.10}f_0\tau_0^{3/2}})^{2^{-\ell/4}\tau_0n+1}\exp(-w_\eqref{lemma3.10}
N)\\&
\geq 1-\exp(-w_\eqref{lemma3.10}N/2).\end{aligned}$$
For any $W_g\in\Omega_g$ consider 
$$\begin{aligned}
\mathcal{E}_{W_g}:=\{\omega\in\Omega:\Psi=W_g,\operatorname{dist}(G_1(\omega)y';V_{G_0,G_1}(E_{y'})(\omega))\geq h_\eqref{lemma3.9}h_\eqref{lemma3.10}\sqrt{N}\\\text{for any } y'\in\mathcal{N}, \quad \|A\|\leq C_\eqref{lemma3.444}\sqrt{N}
\},\end{aligned}
$$
then similarly to the computation of $\mathbb{P}(\Omega_y)$, we have by the second equation of \eqref{firstbound123} $$\begin{aligned}\mathbb{P}_{W_g}(\mathcal{E}_{W_g})&\geq 1-\exp(-N)-2|\mathcal{N}|\exp(-w_\eqref{lemma3.10}N)\\&\geq 1-3(\frac{64C_\eqref{lemma3.444}C_\eqref{lemma3.7}2^{\ell/2}}{h_\eqref{lemma3.10}f_0\tau_0^{3/2}})^{2^{-\ell/4}\tau_0n+1}\exp(-w_\eqref{lemma3.10}N)\geq 1-\exp(-w_\eqref{lemma3.10}N/2).\end{aligned}$$
Now we take any $\omega\in \mathcal{E}_{W_g}$, then using Proposition \ref{proposition3.13.13.1} we have
$$
\inf_{y\in S}\|G(\omega)y\|\geq h_\eqref{lemma3.9}h_\eqref{lemma3.10}\sqrt{N}-\epsilon C_\eqref{lemma3.444}\sqrt{N}\geq \frac{h_{\eqref{lemma3.10}}f_0\sqrt{\tau_0}}{16}\sqrt{N},
$$
which completes the proof of the theorem.
\end{proof}

\subsection{Lower bounding singular value: conclusion}
\label{section3.4theends}

We have collected all the necessary materials for the proof of Theorem \ref{universalbacks}.

\begin{proof}[\proofname\ of Theorem \ref{universalbacks}]
    After rescaling we shall prove the theorem for $\alpha_\eqref{line276definition}=1$. Fix $\delta>1$ and any $\beta\in(0,1)$, set $\gamma=\beta/4$, and let $N_0=N_0(\beta,\delta)$ be the minimal integer satisfying $N_0\geq \max(N_{\eqref{proposition3.11}},N_{\eqref{propos13.13}},16/(1-\gamma))$ and for any $N\geq N_0$ we have
    $$
N\leq\exp(w_\eqref{lemma3.12}N/2),\quad 3\leq \exp(\min(w_\eqref{lemma3.12},w_\eqref{lemma3.10})N/4).
    $$ The entries $g_{ij}$ satisfy $\mathcal{Q}(g_{ij},1)\leq1-\beta$, then
we choose $z\in\mathbb{R}$ such that 
$$
\mathbb{P}(g_{ij}\leq z-1)\geq\frac{\beta}{2},\quad \mathbb{P}(g_{ij}<z-1)\leq\frac{\beta}{2}.
$$
We define the collection of labels $\mathcal{D}$ via 
$$
\mathcal{D}=\Omega_p\cap\Omega_g\cap\Omega_a,
$$ where the three sets on the right hand side are defined in Proposition \ref{proposition3.11}, Lemma\ref{lemma3.12} and Proposition \ref{propos13.13} respectively. Then via a direct union bound, for a randomly chosen label $\Psi$ as in Theorem \ref{universalbacks}, we have $\mathbb{P}(\Psi\in\mathcal{D})\geq 1-2\exp(-w_\eqref{lemma3.10}n)-\exp(-w_\eqref{lemma3.12}n)$.

As $g_{ij}$ has unit second moment, for $N\geq 16/(1-\gamma)$ 
it is impossible to have $\mathbb{P}(z+1\leq g_{ij}\leq z+\sqrt{N})\leq \gamma$
 or $\mathbb{P}\{z-\sqrt{N}\leq g_{ij}\leq z-1\}\leq\gamma$. Thus we assume $\min(\mathbb{P}\{z-\sqrt{N}\leq g_{ij}\leq z-1\},\mathbb{P}\{z+1\leq g_{ij}\leq z+\sqrt{N}\})\geq\gamma.$ Then we define $\theta_\eqref{proposition3.11}$ as in Proposition \ref{proposition3.11} and combine the conclusion of Lemma \ref{lemma3.12} for peaky vectors, Proposition \ref{proposition3.11} for almost sparse vectors and Proposition \ref{propos13.13} for generic vectors to deduce that for any $W\in \mathcal{D}$, 
$$
\mathbb{P}_W\{\sigma_{min}(G)\leq h\sqrt{N}\}\leq \exp(-w_\eqref{lemma3.12}N/2)+2\exp(-w_{\eqref{lemma3.10}}N/2),
$$ where $h=\min(h_\eqref{proposition3.11},h_\eqref{propos13.13},h_\eqref{lemma3.12}\theta_{\eqref{proposition3.11}})$.
 This justifies the claim of Theorem \ref{universalbacks}.
\end{proof}

\subsection{Proof of universality results}\label{section3.5}

This section contains the proof of Theorem \ref{derivation2.414}. We begin with some notations.

The Hausdorff distance of two subsets $A,B\subset\mathbb{R}$ is defined via 
$$
d_H(A,B):=\inf\{\epsilon>0:A\subset B+[-\epsilon,\epsilon]\text{ and } B\subset A+[-\epsilon,\epsilon]\}.
$$

   For any bounded operator  $X$ on a Hilbert space we use the notation $\operatorname{sp}(X)$ to denote the spectrum of $X$.

We will use the following spectrum universality result from \cite{brailovskaya2024universality}, Theorem 2.6.

Let $Z_0$ be an $d\times d$ self-adjoint deterministic matrix, and let $Z_1,\cdots,Z_n$ 
be independent $d\times d$ self-adjoint random matrices with $\mathbb{E}[Z_i]=0$. Consider
\begin{equation}
X:=Z_0+\sum_{i=1}^n Z_i,
\end{equation}
and let $\operatorname{Cov}(X)$ be the $d^2\times d^2$ covariance matrix such that \begin{equation}
    \operatorname{Cov}(X)_{ij,kl}:=\mathbb{E}[(X-\mathbb{E}X)_{ij}\overline{(X-\mathbb{E}X)}_{kl}].
\end{equation}

Let $G$ be a $d\times d$ self-adjoint random matrix where $\{\Re G_{ij},\Im G_{ij}:i,j\in[d]\}$ are jointly Gaussian, satisfying that $\mathbb{E}[G]=\mathbb{E}[X]$ and $\operatorname{Cov}(G)=\operatorname{Cov}(X)$. Then $G$ can be expressed as
$$
G=Z_0+\sum_{i=1}^N A_ig_i
$$
for deterministic matrices $A_1,\cdots,A_N\in M_d(\mathbb{C})_{sa}$ and for i.i.d. real Gaussians $g_1,\cdots,g_N$.

Following \cite{brailovskaya2024universality} we introduce matrix parameters

$$
\sigma(X):=\|\mathbb{E}[(X-\mathbb{E}X)^2]\|^\frac{1}{2},\quad \sigma_*(X):=\sup_{\|v\|=\|w\|=1}\mathbb{E}[|\langle v,(X-\mathbb{E}X)w\rangle|^2]^\frac{1}{2},
$$ 
$$
R(X):=\left\|\max_{1\leq i\leq n}\|Z_i\|\right\|_\infty.
$$

\begin{theorem}\label{theorem3.14goods}[\cite{brailovskaya2024universality}] There exists a universal constant $C>0$ such that for any $t>0$, 
\begin{equation}
    \mathbb{P}[d_H(\operatorname{sp}(X),\operatorname{sp}(G))\geq C\epsilon(t)]\leq de^{-t},
\end{equation} where 
$$\epsilon(t)=\sigma_*(X)t^\frac{1}{2}+R(X)^\frac{1}{3}\sigma(X)^\frac{2}{3}t^\frac{2}{3}+R(X)t.
 $$   
\end{theorem}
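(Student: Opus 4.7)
The plan is to establish spectral universality via a Gaussian interpolation (Lindeberg exchange) argument combined with resolvent analysis and matrix concentration. The three terms in $\epsilon(t)$ should arise from distinct mechanisms: $\sigma_*(X) t^{1/2}$ from Gaussian concentration of $\operatorname{sp}(G)$, $R(X) t$ from a sub-exponential tail for $\|X - \mathbb{E}X\|$, and the middle term $R(X)^{1/3}\sigma(X)^{2/3} t^{2/3}$ as the genuine universality gap in expectation.

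First I would set up the interpolation $X_\theta := Z_0 + \sqrt{1-\theta}\sum_{i=1}^n Z_i + \sqrt{\theta}\sum_{i=1}^n \tilde Z_i$ for $\theta \in [0,1]$, where $\tilde Z_1,\dots,\tilde Z_n$ are independent Gaussian self-adjoint matrices matching the mean and covariance of the corresponding $Z_i$, so that $X_0 = X$ and $X_1 \stackrel{d}{=} G$. For a smooth $\varphi:\mathbb{R}\to\mathbb{R}$ I consider $\mathbb{E}[\operatorname{tr}\varphi(X_\theta)]$ and differentiate in $\theta$. The mean-zero and covariance-matching conditions cause the first- and second-order Taylor contributions from each $Z_i$ to cancel against the corresponding terms from $\tilde Z_i$; handling the Gaussian side via matrix-valued integration-by-parts and the non-Gaussian side via a Stein-type identity, a careful non-commutative third-order Taylor expansion shows
$$
\bigl|\mathbb{E}[\operatorname{tr}\varphi(X)] - \mathbb{E}[\operatorname{tr}\varphi(G)]\bigr| \le C\,\|\varphi^{(3)}\|_\infty \sum_{i=1}^n \mathbb{E}\|Z_i\|^3 \;\lesssim\; \|\varphi^{(3)}\|_\infty\, R(X)\,\sigma(X)^2.
$$

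Next I would apply this comparison to resolvent test functions $\varphi_z(\lambda) = \operatorname{Im}(\lambda - z)^{-1}$ at spectral parameter $z = x + i\eta$, whose third derivative scales like $\eta^{-4}$. This yields a Stieltjes-transform comparison $|\mathbb{E}\operatorname{tr}(X-z)^{-1} - \mathbb{E}\operatorname{tr}(G-z)^{-1}| \lesssim R(X)\sigma(X)^2 \eta^{-3}$. Combined with the identity $\operatorname{dist}(x,\operatorname{sp}(Y))^{-1} = \|(x-Y)^{-1}\|$ and a second-resolvent identity converting the trace comparison into an operator-norm comparison, this produces an expected Hausdorff bound at scale $\rho$ whenever $R(X)\sigma(X)^2 \rho^{-3} \ll \rho^{-1}$, forcing $\rho \gtrsim R(X)^{1/3}\sigma(X)^{2/3}$ and producing the middle term of $\epsilon(t)$. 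To boost from expectation to a high-probability statement with parameter $t$, I would invoke Gaussian concentration of $\operatorname{sp}(G)$, which is $\sigma_*(X)$-Lipschitz in the underlying independent Gaussians $g_1,\dots,g_N$ defining $G$ and hence contributes the $\sigma_*(X) t^{1/2}$ term, together with a matrix Bernstein-type inequality for $\|X - \mathbb{E}X\|$ with parameters $\sigma(X), R(X)$, producing the sub-exponential $R(X) t$ term. A final union bound over a $\rho$-net of $\operatorname{sp}(G)\cup\operatorname{sp}(X)$ of cardinality $O(d/\rho)$ yields the $d e^{-t}$ probability.

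The main obstacle is obtaining the \emph{exact} exponent $2/3$ in the interpolation term, which hinges on a tight balance between the $\eta^{-3}$ blow-up of third resolvent derivatives and the $\eta^{-1}$ size of the resolvent norm on the resolvent set: any logarithmic slack in either direction would inflate this term and destroy the sharpness of the smallest-singular-value estimate used earlier in this paper. Executing the balance requires performing the Gaussian integration-by-parts non-commutatively, tracking products of resolvents in operator norm rather than Hilbert--Schmidt norm, and enumerating the matricial Wick contractions so that each contraction is bounded by exactly one of $\sigma(X)$, $\sigma_*(X)$, or $R(X)$. Any weaker bookkeeping of the third-order non-commutative remainder would lose the clean three-term form of $\epsilon(t)$ and yield a looser universality statement, which in turn would be incompatible with the quantitative bound that Theorem \ref{derivation2.414} requires downstream.
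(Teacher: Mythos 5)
This theorem is not proved in the paper: it is imported verbatim from Brailovskaya and van Handel (their Theorem 2.6), so there is no internal proof to compare against, and I evaluate your sketch on its own terms.

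Your high-level breakdown of where the three terms come from --- $\sigma_*(X)t^{1/2}$ from Gaussian concentration of $\operatorname{sp}(G)$, $R(X)t$ from the sub-exponential tail of $\|X-\mathbb{E}X\|$, and $R(X)^{1/3}\sigma(X)^{2/3}t^{2/3}$ from the Lindeberg comparison --- matches the correct intuition. But the central estimate has a concrete gap. You assert
$$
\bigl|\mathbb{E}\operatorname{tr}\varphi(X)-\mathbb{E}\operatorname{tr}\varphi(G)\bigr|\lesssim \|\varphi^{(3)}\|_\infty\sum_{i=1}^n\mathbb{E}\|Z_i\|^3\lesssim \|\varphi^{(3)}\|_\infty\,R(X)\,\sigma(X)^2,
$$
and the second inequality is false. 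One has $\sum_i\mathbb{E}\|Z_i\|^3\le R(X)\sum_i\mathbb{E}\|Z_i\|^2$, and since each $Z_i^2$ is positive semidefinite, Jensen gives $\sum_i\mathbb{E}\|Z_i\|^2\ge\|\sum_i\mathbb{E}Z_i^2\|=\sigma(X)^2$; the inequality therefore runs in the \emph{wrong} direction for you and can be lossy by a factor of $d$. For a standard $d\times d$ Wigner matrix written as the sum of its $\sim d^2$ independent entry summands one has $R\asymp d^{-1/2}$, $\sigma\asymp 1$, and $\sum_i\mathbb{E}\|Z_i\|^3\asymp d^{1/2}$, so $R\sigma^2$ underestimates by a factor of $d$. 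This lost factor of $d$ is precisely the obstruction the Brailovskaya--van Handel argument is built to overcome: they do not bound the remainder by the scalar $\sum_i\mathbb{E}\|Z_i\|^3$ at all, but control the third-cumulant tensor $\sum_i\mathbb{E}[Z_i\otimes Z_i\otimes Z_i]$ tested against products of resolvents, exploiting cancellations from the trace and covariance structure so that only one factor of $R$ and two factors built from $\sigma$ survive. Without that refinement the term $R^{1/3}\sigma^{2/3}t^{2/3}$ does not emerge, and the resulting comparison would be too weak to yield the quantitative resolvent-free singular-value comparison that this paper feeds into its main lower bound. A secondary issue: $\operatorname{sp}(G)\cup\operatorname{sp}(X)$ is a random set, so a fixed $\rho$-net over it is not available; in Brailovskaya--van Handel the factor $d$ in $de^{-t}$ comes from the dimension dependence of a matrix concentration inequality for smooth spectral statistics, not from the cardinality of a net.
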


The following lemma (for deterministic matrices) is similar to \cite{bandeira2023matrix}, Lemma 3.13 and has a similar proof.

\begin{lemma}\label{740new}
    For any $\epsilon>0$ let $A_\epsilon=4\epsilon^2\mathbf{1}$(here $\mathbf{1}$ is the $d$-dimensional identity matrix). For the random matrices $H,G$ we define 
    $$
\widetilde{G}_\epsilon=\begin{pmatrix}0&0&G&A_\epsilon^\frac{1}{2}\\0&0&0&0\\G^*&0&0&0\\A_\epsilon^\frac{1}{2}&0&0&0
\end{pmatrix},\quad \widetilde{H}_\epsilon=\begin{pmatrix}0&0&H&A_\epsilon^\frac{1}{2}\\0&0&0&0\\H^*&0&0&0\\A_\epsilon^\frac{1}{2}&0&0&0\end{pmatrix}.
    $$ 
    
    Then $\operatorname{sp}(\widetilde{H}_\epsilon)\subseteq \operatorname{sp}(\widetilde{G}_\epsilon)+[-\epsilon,\epsilon]$
    implies that 
    $$\begin{cases}
\lambda_+(HH^*+A_\epsilon)^\frac{1}{2}\leq \lambda_+(GG^*+A_\epsilon)^\frac{1}{2}+\epsilon,\\
\lambda_{-}(HH^*+A_\epsilon)^\frac{1}{2}\geq \lambda_{-}(GG^*+A_\epsilon)^\frac{1}{2}-\epsilon,\\
    \end{cases}$$
    which holds for any $\epsilon>0$, and the same holds if we replace the order of $G$ and $H$. We denote by $\lambda_+(Z):=\sup\operatorname{sp}(Z)$ and $\lambda_{-}(Z):=\inf\operatorname{sp}(Z)$ for each self-adjoint operator $Z$. 

    Furthermore, $\operatorname{sp}(\widetilde{
    H}_\epsilon)\subseteq \operatorname{sp}(\widetilde{G}_\epsilon)+[-\epsilon,\epsilon]$ implies that 
    $$
\sigma_{min}(H)\geq\sigma_{min}(G)-3\epsilon.
    $$
\end{lemma}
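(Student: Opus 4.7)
The plan is to read $\widetilde{H}_\epsilon$ as a (padded) Hermitian dilation and then translate one-sided spectral inclusions into singular-value information. After deleting the trivial block $2$, which contributes only zero eigenvalues, the nontrivial part of $\widetilde{H}_\epsilon$ is exactly the Hermitian dilation of the augmented matrix $M_H := (H,\ A_\epsilon^{1/2})$, and analogously $\widetilde{G}_\epsilon$ is the dilation of $M_G := (G,\ A_\epsilon^{1/2})$. In particular the nonzero spectrum of $\widetilde{H}_\epsilon$ equals $\{\pm\sqrt{\lambda_i(M_H M_H^*)}\}_i = \{\pm\sqrt{\lambda_i(HH^*+A_\epsilon)}\}_i$, and similarly for $\widetilde{G}_\epsilon$. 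Because $A_\epsilon$ is a positive multiple of the identity with smallest eigenvalue $4\epsilon^2$, every eigenvalue of $HH^*+A_\epsilon$ is at least $4\epsilon^2$, so every nonzero element of $\operatorname{sp}(\widetilde{H}_\epsilon)$ has absolute value at least $2\epsilon$; the largest positive eigenvalue is $\sqrt{\lambda_+(HH^*+A_\epsilon)}$ and the smallest positive one is $\sqrt{\lambda_-(HH^*+A_\epsilon)}$.

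The two displayed inequalities now follow by applying the inclusion to these specific eigenvalues. For the upper bound, $\sqrt{\lambda_+(HH^*+A_\epsilon)}\in\operatorname{sp}(\widetilde{H}_\epsilon)$ must lie within $\epsilon$ of some $\mu\in\operatorname{sp}(\widetilde{G}_\epsilon)$, and $\mu\leq\lambda_+(\widetilde{G}_\epsilon)=\sqrt{\lambda_+(GG^*+A_\epsilon)}$ yields the claim after rearrangement. For the lower bound, the smallest positive element $\sqrt{\lambda_-(HH^*+A_\epsilon)}\geq 2\epsilon$ is within $\epsilon$ of some $\mu\in\operatorname{sp}(\widetilde{G}_\epsilon)$, which forces $\mu\geq\epsilon>0$; since the positive part of $\operatorname{sp}(\widetilde{G}_\epsilon)$ is bounded below by $\sqrt{\lambda_-(GG^*+A_\epsilon)}$, the bound follows. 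Exchanging $G$ and $H$ is a symmetric argument.

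To deduce the final statement, I pass from the $\lambda_-$ bound via the identification $\lambda_-(HH^*+A_\epsilon)=\sigma_{min}(H)^2+4\epsilon^2$, which holds provided the dilation is set up so that $HH^*$ carries no kernel eigenvalues beyond those already dictated by the augmentation. The $\lambda_-$ inequality then reads $\sqrt{\sigma_{min}(H)^2+4\epsilon^2}\geq\sqrt{\sigma_{min}(G)^2+4\epsilon^2}-\epsilon$. If $\sigma_{min}(G)\leq 3\epsilon$, the conclusion $\sigma_{min}(H)\geq\sigma_{min}(G)-3\epsilon$ is trivial since its right-hand side is nonpositive. Otherwise both sides are positive, so squaring is legitimate; combined with the elementary estimate $\sqrt{b^2+4\epsilon^2}\leq b+2\epsilon$ valid for $b\geq 0$, this yields $\sigma_{min}(H)^2\geq\sigma_{min}(G)^2-2\epsilon\sigma_{min}(G)-3\epsilon^2$. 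A direct check shows that, exactly when $\sigma_{min}(G)\geq 3\epsilon$, this lower bound dominates $(\sigma_{min}(G)-3\epsilon)^2$, which gives the result.

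The main bookkeeping subtlety is ensuring the orientation of $H$ so that $\sqrt{\lambda_-(HH^*+A_\epsilon)}$ genuinely equals $\sqrt{\sigma_{min}(H)^2+4\epsilon^2}$ rather than the uninformative $2\epsilon$ arising from a kernel dimension mismatch. This is precisely the role of placing $A_\epsilon^{1/2}$ at the $(1,4)$ and $(4,1)$ corners of $\widetilde{H}_\epsilon$: the augmentation fills in the otherwise-empty directions on the thin side of $H$ so that the shifted spectrum of $HH^*+A_\epsilon$ faithfully records the singular values of $H$.
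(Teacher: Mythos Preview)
Your argument follows the same route as the paper: identify the nonzero spectrum of $\widetilde{H}_\epsilon$ with $\pm\sqrt{\lambda_i(HH^*+A_\epsilon)}$ via the Hermitian dilation of $M_H=(H,\,A_\epsilon^{1/2})$, use the $2\epsilon$ gap to keep the relevant eigenvalues away from zero, and then read off the inequalities. Your final deduction is more laborious than necessary; the paper simply invokes the sandwich
\[
\sigma_{\min}(H)\ \le\ \lambda_-(HH^*+A_\epsilon)^{1/2}\ \le\ \sigma_{\min}(H)+2\epsilon,
\]
together with the analogous bound for $G$, and subtracts, which gives $\sigma_{\min}(H)\ge\sigma_{\min}(G)-3\epsilon$ in one line without the case split or squaring.

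There is, however, a genuine gap in your last paragraph, and your hand-wave does not close it. With $H$ of size $N\times n$ and $N>n$, the matrix $HH^*$ has an $(N-n)$-dimensional kernel, so $HH^*+4\epsilon^2 I_N$ always has $\lambda_-=4\epsilon^2$, regardless of $\sigma_{\min}(H)$. The augmentation by $A_\epsilon^{1/2}$ in the $(1,4)$ and $(4,1)$ blocks does \emph{not} ``fill in the thin side'': it produces $M_H M_H^*=HH^*+A_\epsilon$, which still carries those $N-n$ eigenvalues equal to $4\epsilon^2$. Hence the identification $\lambda_-(HH^*+A_\epsilon)=\sigma_{\min}(H)^2+4\epsilon^2$ is false as written, and your chain for the lower bound collapses to the triviality $2\epsilon\ge 2\epsilon-\epsilon$. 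The correct orientation is $H^*H+A_\epsilon$ with $A_\epsilon=4\epsilon^2 I_n$ (equivalently, take the augmentation $M_H=\binom{H}{2\epsilon I_n}$ rather than $(H,\,2\epsilon I_N)$), for which $\lambda_-(H^*H+A_\epsilon)=\sigma_{\min}(H)^2+4\epsilon^2$ holds exactly. With this reading --- which is also what the paper's final sandwich inequality tacitly requires --- both the paper's one-line argument and your longer computation go through cleanly.
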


\begin{proof}
    A standard linear algebra fact implies that
$$
\operatorname{sp}(\widetilde{G}_\epsilon)\cup\{0\}=\operatorname{sp}((GG^*+A_\epsilon)^\frac{1}{2})\cup -\operatorname{sp}((GG^*+A_\epsilon)^\frac{1}{2})\cup\{0\}, 
$$ and the same holds for $\widetilde{H}_\epsilon$. Then $\operatorname{sp}(\widetilde{H}_\epsilon)\subset\operatorname{sp}(\widetilde{G}_\epsilon)+[-\epsilon,\epsilon]$ implies that 
$$
\lambda_+(HH^*+A_\epsilon)^\frac{1}{2}\leq\lambda_+(GG^*+A_\epsilon)^\frac{1}{2}+\epsilon.
$$ Meanwhile, noting that we must have $\lambda_{-}(GG^*+A_\epsilon)^\frac{1}{2}\geq 2\epsilon$ and $\lambda_{-}(HH^*+A_\epsilon)^\frac{1}{2}\geq 2\epsilon$, so that even though the spectrum of $\widetilde{H}_\epsilon$ has a zero, we still conclude that 
$$
\lambda_{-}(HH^*+A_\epsilon)^\frac{1}{2}\geq \lambda_{-}(GG^*+A_\epsilon)^\frac{1}{2}-\epsilon.
$$
Finally, using
$$
\sigma_{min}(H)\leq \lambda_{-}(HH^*+A_\epsilon)^\frac{1}{2}\leq \sigma_{min}(H)+2\epsilon
$$ and a similar expression for $\sigma_{min}(G),$ we conclude that 
$$
\sigma_{min}(H)\geq \sigma_{min}(G)-3\epsilon.
$$\end{proof}

Let $H$ and $G$ be as defined in Theorem \ref{derivation2.414}. We note the following computation of matrix parameters: for any $\epsilon>0$ we have 
$$
\sigma(\widetilde{T}_\epsilon)\leq M\sqrt{\frac{N}{n}},\quad \sigma_*(\widetilde{T}_\epsilon)\leq\frac{M}{\sqrt{n}}, \quad R(\widetilde{T}_\epsilon)\leq q.
$$

We have collected necessary ingredients for proving Theorem \ref{derivation2.414}.
\begin{proof}[\proofname\ of Theorem \ref{derivation2.414}]
We define random matrices $\widetilde{H}_{\epsilon(t)},$ $\widetilde{G}_{\epsilon(t)}$ as in the statement of Lemma \eqref{740new}, where we replace $\epsilon$ by $\epsilon(t)$ defined by 
$$
\epsilon(t)=CMn^{-\frac{1}{2}}t^\frac{1}{2}+CM^\frac{2}{3}q^{\frac{1}{3}}t^{\frac{2}{3}}(\frac{N}{n})^\frac{1}{3}+Cqt,
$$ where the constant $C>0$ is defined in Theorem \ref{theorem3.14goods} and $M>1$ is the upper bound of entry variance in the statement of Theorem \ref{derivation2.414}.
Applying Theorem \ref{theorem3.14goods}, we get that for any $t>0$,
$$
\mathbb{P}\left(d_H\left(\operatorname{sp}(\widetilde{T}_{\epsilon(t)}),\operatorname{sp}(\widetilde{G}_{\epsilon(t)})\right)\geq \epsilon(t)\right)\leq 4N e^{-t}.
    $$
Then Lemma \ref{740new} implies that for any $t\geq 0,$
     $$
\mathbb{P}(\sigma_{min}(T)\geq \sigma_{min}(G)-3\epsilon(t))\geq 1-4N e^{-t}
    $$ and 
  $$
\mathbb{P}(\sigma_{min}(G)\geq \sigma_{min}(T)-3\epsilon(t))\geq 1-4N e^{-t}.
    $$
It suffices to replace the universal constant $C$ by $3CM$,  which is a universal constant depending only on $M>0,$ (and replace $M^\frac{2}{3}$ by $M$ since $M\geq 1$) to conclude the proof.
\end{proof}

\section*{Funding}
The author is supported by a Simons Foundation Grant (601948, DJ)

\printbibliography

\end{document}